\documentclass[11pt]{article}
\usepackage{geometry}
\usepackage{graphicx}
\usepackage{algorithm}
\usepackage{algpseudocode} 
\usepackage{amsmath,amssymb,amsthm,bm,mathtools,amsfonts,mathrsfs}
\usepackage{enumitem}
\usepackage[numbers,sort&compress]{natbib}
\usepackage[T1]{fontenc}
\usepackage{lmodern}
\usepackage[table]{xcolor}
\usepackage{titlesec}
\usepackage{subcaption}
\usepackage{float}
\usepackage{textcomp}
\usepackage{dsfont}
\usepackage{tcolorbox}
\usepackage{booktabs}
\usepackage{makecell}
\usepackage{multirow}
\usepackage{epstopdf}
\usepackage{threeparttable}

\usepackage{hyperref}
\hypersetup{colorlinks=true, linkcolor=magenta, citecolor=blue, urlcolor=blue}
\usepackage{thmtools}
\usepackage{cleveref}

\newtheorem{theorem}{Theorem}[section]
\newtheorem{lemma}[theorem]{Lemma}
\newtheorem{corollary}[theorem]{Corollary}

\newtheorem{assumption}{Assumption}[section]
\newtheorem{definition}{Definition}[section]
\newtheorem{remark}{Remark}[section]

\usepackage{todonotes}
\usepackage{etoolbox}
\makeatletter
\patchcmd{\@addmarginpar}{\ifodd\c@page}{\ifodd\c@page\@tempcnta\m@ne}{}{}
\makeatother
\reversemarginpar
\usepackage{mathtools}
\allowdisplaybreaks
\usepackage{bbm}
\usepackage{bbding}
\usepackage{amssymb}

\def\grad{\nabla}

\def\cB{\mathcal{B}}
\def\cC{\mathcal{C}}

\def\cF{\mathcal{F}}
\def\cG{\mathcal{G}}

\def\cK{\mathcal{K}}
\def\cL{\mathcal{L}}
\def\cM{\mathcal{M}}
\def\cN{\mathcal{N}}
\def\cO{\mathcal{O}}

\def\cS{\mathcal{S}}
\def\cT{\mathcal{T}}

\def\cX{\mathcal{X}}
\def\cY{\mathcal{Y}}
\def\cZ{\mathcal{Z}}

\def\smskip{\smallskip}

\def\texitem#1{\par\smskip\noindent\hangindent 25pt
               \hbox to 25pt {\hss #1 ~}\ignorespaces}

\def\norm#1{\|#1\|}

\newcommand{\BEAS}{\begin{eqnarray*}}
\newcommand{\EEAS}{\end{eqnarray*}}
\newcommand{\BEA}{\begin{eqnarray}}
\newcommand{\EEA}{\end{eqnarray}}
\newcommand{\BEQ}{\begin{eqnarray}}
\newcommand{\EEQ}{\end{eqnarray}}
\newcommand{\BIT}{\begin{itemize}}
\newcommand{\EIT}{\end{itemize}}
\newcommand{\BNUM}{\begin{enumerate}}
\newcommand{\ENUM}{\end{enumerate}}

\newcommand{\BA}{\begin{array}}
\newcommand{\EA}{\end{array}}

\newcommand{\reals}{\mathbb{R}}
\newcommand{\integers}{\mathbb{Z}}

\newcommand{\Rank}{\mathop{\bf rank}}

\def\fprod#1{\left\langle#1\right\rangle}

\newcommand{\dom}{\mathop{\bf dom}}

\usepackage{xcolor}

\newif\ifpagenumbering
\pagenumberingtrue

\pagenumberingfalse
\newsavebox{\theorembox}
\newsavebox{\lemmabox}
\newsavebox{\defnbox}
\newsavebox{\assbox}
\savebox{\theorembox}{\noindent\bf Theorem}
\savebox{\lemmabox}{\noindent\bf Lemma}
\savebox{\defnbox}{\noindent\bf Definition}

\usepackage{soul}
\usepackage[normalem]{ulem}
\DeclareUnicodeCharacter{2212}{~}

\usepackage{hyperref}
\usepackage{cleveref}
\crefname{assumption}{Assumption}{Assumptions}
\crefname{theorem}{Theorem}{Theorems}
\crefname{lemma}{Lemma}{Lemmas}

\usepackage{relsize}

\usepackage{pifont}
\newcommand{\cmark}{\ding{51}}
\newcommand{\xmark}{\ding{55}}

\definecolor{darkgreen}{rgb}{0.2,0.8,0.1}
\def\jw#1{\textcolor{black}{#1}}

\def\sa#1{\textcolor{black}{#1}}

\def\rapdb{\texttt{rAPDB}}
\def\rapdbada{\texttt{rAPDB-ada}}
\def\apdbxy{\texttt{APDB-xy}}
\def\apdbyx{\texttt{APDB-yx}}
\def\rapdbxy{\texttt{rAPDB-xy}}
\def\rapdbyx{\texttt{rAPDB-yx}}

\newcommand{\R}{\mathbb R}
\newcommand{\dist}{\operatorname{dist}}

\newcommand{\feas}{\mathcal X_{\rm feas}}
\newcommand{\Kcone}{\mathcal K}



\title{Restarted Accelerated Primal-Dual Algorithms\\ with Adaptive Stepsizes\\ for Nonlinear Conic Constrained Convex Optimization}
\author{
Necdet Serhat Aybat\thanks{The authors contributed equally to this work.}\\
\small{Department of Industrial and Manufacturing Engineering}\\
\small{Penn State University}\\
{\small \texttt{nsa10@psu.edu}}
\and
Jinxin Wang\footnotemark[1]\\
\small{Department of Statistics}\\
\small{University of Chicago}\\
{\small\texttt{wangjinxin68@gmail.com}}
}
\date{}

\begin{document}

\maketitle

\begin{abstract}
We propose restarted accelerated primal-dual algorithms with (non-monotone) backtracking (rAPDB) for convex nonlinear conic programs, with quadratically constrained quadratic programs (QCQPs) as a special case. Unlike linear and quadratic programs, these problems give rise to convex-concave minimax reformulations with 
\emph{non-bilinear} coupling terms; 
therefore, the existing primal-dual methods for \textit{bilinear} couplings are not 
applicable. To address this challenge, we build on the accelerated primal-dual method with \sa{adaptive} stepsize search \sa{---as it adapts to the local curvature---} and develop both fixed-frequency and adaptive restart schemes, incorporating both monotone and non-monotone adaptive step-size search strategies. The resulting algorithms require only first-order information and matrix-vector products, making them suitable for large-scale and GPU-accelerated implementation. Under metric subregularity of the KKT mapping, we prove a quadratic growth property for a self-centered smoothed duality gap and establish global linear convergence of the proposed restarted methods. We also establish sufficient conditions under which the metric subregularity holds even for general \emph{nonconvex} problems over convex polyhedral cones. These results are new and may be of independent interest. Numerical experiments on random QCQPs and kernel matrix learning instances show that the proposed methods, especially with non-monotone adaptive stepsizes and GPU acceleration, achieve strong practical performance.
\end{abstract}

\section{Introduction}
Convex nonlinear conic programs (NCPs), which include linear programs (LPs), quadratic programs (QPs), quadratically constrained quadratic programs (QCQPs), second-order cone programs (SOCPs), and semidefinite programs (SDPs) as special cases, are a fundamental class of optimization problems and arise widely in signal processing \cite{luo2010semidefinite}, machine learning \cite{ye2007learning,komiyama2018nonconvex,wang2022solving,gabriel2019computational}, and decision-making under uncertainty \cite{phan1982quadratically,ben2001lectures,alizadeh2003second,chen2024exponential}.
Although conic programs can be solved reliably to high accuracy by
interior-point-type methods, as implemented in solvers such as Gurobi and
MOSEK, these approaches typically rely on repeated factorizations of large
linear systems. Sparse direct solvers exploit the sparsity of these systems,
but their efficiency depends heavily on the sparsity pattern \cite{davis2016survey}. 
For large-scale instances, the resulting
factors can be substantially denser than the original problem data, leading
to significant memory and computational costs \cite{lu2026practical}.
Furthermore, due to the sequential nature of factorization, it is challenging for factorization-based routines to take advantage of parallel and distributed computing, \sa{e.g., factorization-based methods cannot fully exploit the parallel computing capability of GPUs.}

By contrast, first-order primal-dual (FOPD) methods based on matrix-vector multiplication can leverage the sparsity structure well and have low-memory overhead. Furthermore, these methods scale quite well on modern computing architectures including GPUs and distributed computing. Indeed, recent developments 
\sa{in the design and analysis of FOPD} methods for large-scale LPs \cite{applegate2023faster,fercoq2023quadratic,applegate2021practical,chen2025relationships} and QPs \cite{lu2026practical,huang2025restarted} have shown strong empirical gains \sa{in comparison to} 
simplex- or interior-point-type methods --see also \cite{lu2025overview} for an overview of recent GPU-based first-order methods. In particular, Applegate et al. in \cite{applegate2023faster} studied 
\sa{various FOPD} methods, 
\sa{including the} alternating direction method of multipliers (ADMM), the primal-dual hybrid gradient method (PDHG), and the extragradient method (EGM), for solving the \emph{bilinear} minimax reformulation of large-scale LPs. \sa{By employing systematic restarts of the method and by exploiting the} sharpness of a normalized duality gap, 
\sa{these aforementioned} methods achieve fast linear convergence rate, which suggests that 
\sa{the class of FOPD methods is rich enough and methods from this class} can reach moderate-to-high accuracy in practice within a reasonable time budget. In a concurrent work \cite{fercoq2023quadratic}, Fercoq considered more general bilinear minimax problems (including LPs) and proposed the quadratic growth of a smoothed duality gap to obtain linear convergence rate of restarted PDHG. Building on the PDHG framework, Applegate et al.~\cite{applegate2026pdlp}
incorporated several practical enhancements, such as diagonal preconditioning,
presolving, adaptive stepsizes, adaptive restarting, and feasibility polishing,
to achieve significant speedups. Going beyond LPs, Lu et al.~\cite{lu2026practical} extended accelerated PDHG with restarts
from solving LPs to solving QPs, and obtained linear convergence rate and optimal iteration complexity among a wide class of primal-dual methods. Empirically, they showed that PDHG with restarts and acceleration outperforms other operator-splitting-based first-order methods including OSQP \cite{stellato2018osqp} and SCS \cite{ocpb:16,o2021operator}. In addition, Huang et al. in \cite{huang2025restarted} proposed a restarted primal-dual hybrid conjugate gradient (PDHCG) method for solving QPs, where the conjugate gradient method is employed to solve the primal subproblems. It is shown that PDHCG achieves linear convergence rate with improved convergence constants and superior empirical performance compared with the restarted accelerated PDHG in \cite{lu2026practical}.~\jw{Another related line of work is \cite{lin2025pdcs,lin2026technical}, where Lin et al. proposed restarted PDHG-type methods with enhanced implementations for linear conic programs (LCPs), including second-order cones
and exponential cones, but with rather limited theoretical guarantees.} It is worth highlighting that all these works rely on transforming LPs, QPs, \jw{and LCPs} to minimax optimization problems with \emph{bilinear} coupling, which enables sharpness or quadratic growth properties via Hoffman's bound for linear systems. Besides, motivated by theoretical strengths of HPR over PDHG, Chen et al. in~\cite{chen2025hpr} presented a Halpern Peaceman-Rachford (HPR) method with adaptive restarts for solving large-scale LPs and then extended it to solving large-scale QPs in \cite{chen2025hprqp}. Notably, the HPR framework delivers superior performance over the PDHG framework.

In this paper, we propose efficient FOPD methods (called \rapdb{} and \rapdbada{}) for general NCPs 
\sa{achieving} \emph{linear convergence rate} under reasonable regularity assumptions.
A key difference between QPs and more general NCPs, e.g., QCQPs, is that the coupling term in the minimax formulation is 
not bilinear anymore \sa{whenever NCPs have nonlinear functional constraints. This difference renders} many classical primal-dual splitting schemes tailored for bilinear minimax problems \textit{not}
directly applicable, including the methods discussed above for solving LPs
and QPs. A notable recent exception is \cite{diaz2026active}, which shows that EGM
with constant stepsizes for QCQPs achieves \emph{local} linear convergence
under metric subregularity. Similar to \cite{diaz2026active}, our
convergence guarantees for the proposed methods are also entirely nonasymptotic and, importantly, do not rely on strict complementarity; and in contrast to \cite{diaz2026active}, \sa{the proposed methods in this paper} guarantee \textit{global} linear convergence under metric subregularity, while the EGM analyzed in \cite{diaz2026active} establishes local linear convergence after identification of the active set. \sa{One significant practical drawback} of \cite{diaz2026active} is that it fixes the same constant stepsize for both primal and dual updates, which can lead to unsatisfactory practical performance when the formulation is unbalanced, e.g., when the curvature in the objective function is much larger than that in the constraint functions.

Recently, Hamedani and Aybat~\cite{hamedani2021primal}
proposed an accelerated primal-dual method with (non-monotone) backtracking (APDB) for a quite general class of minimax optimization problems with non-bilinear coupling terms, which includes the minimax reformulation of convex
optimization problems with nonlinear conic constraints. Such an adaptive step-size selection rule \sa{(as it adapts to the local curvature)} avoids manual parameter tuning and usually allows for larger step sizes by exploiting local Lipschitz constants, which can be significantly smaller than the global constant. Hamedani and
Aybat~\cite{hamedani2021primal} established that suboptimality in terms of
function values and constraint violation both diminish to zero with
sublinear convergence rates of \(\cO(1/k)\) and \(\cO(1/k^2)\) for convex
optimization problems with nonlinear conic constraints when the objective is
merely convex and strongly convex, respectively. That being said, for the
majority of practical applications, sublinear convergence rate guarantees are not desirable when the goal is to compute near-optimal solutions with \textit{moderate-to-high accuracy}. In order to achieve linear convergence rate guarantees while avoiding manual step-size tuning or heuristic step-size selection rules, we propose restarted versions of APDB: \rapdb{} with fixed-frequency restarts and \rapdbada{} with adaptive restarts.

Our concrete contributions are as follows:
\begin{itemize}
\item[(i)] We propose several \textit{restarted} versions of APDB, namely
\rapdb{} and \rapdbada{}, for tackling large-scale convex NCPs, with QCQPs as a special case. These methods only rely on
matrix-vector multiplications and simple projection-type operations; thus, they can take advantage of modern computing architectures such as GPUs to
achieve significant acceleration.\footnote{Due to its alternating update
rule, APDB proposed in~\cite{hamedani2021primal} does not have symmetric
updates for primal and dual variables. Therefore, for both \rapdb{} and
\rapdbada{}, we consider two variants: one with the primal \((x)\)-update
followed by the dual \((y)\)-update, denoted by \texttt{rAPDB-xy} and
\texttt{rAPDB-xy-ada}, and the other with the dual update followed by the
primal update, denoted by \texttt{rAPDB-yx} and \texttt{rAPDB-yx-ada}.} A prominent feature of \rapdb{} and
\rapdbada{} is their adaptive step-size selection scheme with both monotone and non-monotone strategies, which often
enables larger stepsizes and faster practical convergence.

\item[(ii)] We establish sufficient conditions for the metric
subregularity of the KKT map for a broad class of \emph{nonconvex} NCPs \sa{subject to nonlinear functional constraints defined by polyhedral cones (see~\Cref{thm:jongshi}).} These results extend the related error-bound results in~\cite{facchinei2003finite} and show that the key regularity assumption
underlying our linear convergence analysis holds under natural constraint
qualifications and a tractable condition excluding nonzero critical
directions, which may be of independent interest.

\item[(iii)] Under metric subregularity of the corresponding KKT mapping, we first show a quadratic growth property of a self-centered smoothed duality gap \sa{(see~\Cref{thm: qg}).}
Using this property, we further establish global linear convergence of \rapdb{}
and \rapdbada{} \sa{(see~\Cref{thm: lr_rapdb} and \Cref{sec:rAPDB-ada})}. This enables the proposed \rapdb{}
and \rapdbada{} to reach
moderate-to-high accuracy within a reasonable time frame. To the best of
our knowledge, this is the first restarted primal-dual framework
with linear convergence guarantees for convex NCPs \sa{--in general, these problems lead to non-bilinear coupling in their minimax reformulation.}

\item[(iv)] Numerical experiments on random QCQPs and kernel matrix
learning problem instances demonstrate that the proposed schemes are practically efficient and outperform state-of-the-art solvers, including
interior-point-type methods.
\end{itemize}

To highlight our contributions, in \Cref{tab:intro-comparison} \sa{we compare  the FOPD methods closely related to ours.}

\begin{table}[t]
\centering
\begin{threeparttable}
\caption{Comparison with related work.}
\label{tab:intro-comparison}
\renewcommand{\arraystretch}{1.2}
\begin{tabular}{lcccc}
\toprule
\makecell{Problem\\ class} & Method & \makecell{\sa{Functional}\\ \sa{constraints\tnote{\dag}}} & \makecell{Adaptive\\ Stepsizes} & \makecell{Linear\\ rate} \\
\midrule
\multirow{3}{*}{LPs}
& PDHG \cite{applegate2023faster,fercoq2023quadratic} 
& \xmark & \xmark & \cmark \\
& PDLP \cite{applegate2021practical,applegate2026pdlp} 
& \xmark & \cmark\tnote{2} & \cmark \\
& HPR-LP \cite{chen2025hpr} 
& \xmark & \cmark\tnote{3} & \xmark \\
\midrule
\multirow{3}{*}{QPs}
& PDQP \cite{lu2026practical} 
& \xmark & \xmark & \cmark \\
& PDHCG \cite{huang2025restarted} 
& \xmark & \cmark\tnote{4} & \cmark \\
& HPR-QP \cite{chen2025hprqp} 
& \xmark & \cmark\tnote{3} & \xmark \\
\midrule
\multirow{4}{*}{\makecell{Conic\\ programs}}
& EGM~\cite{diaz2026active} 
& \cmark & \xmark & \xmark\tnote{1} \\
& APDB \cite{hamedani2021primal} 
& \cmark & \cmark & \xmark \\
& PDCS \cite{lin2025pdcs,lin2026technical} 
& \xmark & \cmark\tnote{4} & \xmark \\
& \rapdb{} and \rapdbada{} (ours) 
& \cmark & \cmark\tnote{5} & \cmark \\
\bottomrule
\end{tabular}
\begin{tablenotes}
\footnotesize
\item[$\dag$] \sa{This column summarizes whether the method can handle \textit{functional constraints} of the form $g(x)\in-\cK$ for some \textit{nonlinear} $g:\reals^n\to\reals^m$ and closed convex cone $\cK$.}
\item[1] \sa{The recent work~\cite{diaz2026active} \jw{does not tackle general conic constraints and only deals with the nonnegative orthant. It} establishes \textit{local} linear convergence under metric subregularity without assuming strict complementarity; however, the overall convergence guarantee is sublinear with a local linear rate.}
\item[2] \jw{PDLP employs a heuristic line-search scheme, 
see \cite[Section 3]{applegate2026pdlp}.}
\item[3] \jw{HPR-LP and HPR-QP adjust the penalty parameter adaptively.}
\item[4] \jw{PDHCG and PDCS adopt a heuristic line-search scheme similar to that in PDLP, see \cite[Section 4]{huang2025restarted} and \cite[Section 2.1]{lin2026technical}.}
\item[5] \jw{\rapdb{} and \rapdbada{} allow for both monotone and non-monotone step-size search.}
\end{tablenotes}
\end{threeparttable}
\end{table}



\paragraph{Notation.} Throughout, given $m\in\integers_+$, let $[m]\triangleq\{1,\ldots,m\}$. We use $\mathbb{S}^n_+$ to denote the set of $n$-by-$n$ symmetric and positive semidefinite matrices and $I_n$ denotes the $n$-by-$n$ identity matrix. For a closed convex 
set $\mathcal{C} \subseteq \mathbb{R}^n$, we denote the indicator function of $\mathcal{C}$ by $\iota_{\mathcal{C}}$, i.e., 
\[
\iota_{\mathcal{C}}(x) \;\triangleq \;
\begin{cases}
0, & x \in \mathcal{C},\\
+\infty, & x \notin \mathcal{C};
\end{cases}
\]
moreover, \sa{for any $x\in\cC$, the set $\cN_{\cC}(x)\triangleq\{d\in\reals^n:\ \fprod{d,x'-x}\leq 0 \quad\forall~x'\in\cC\}$ defines the normal cone of $\cC$ at $x\in\cC$.}
The norm $\norm{\cdot}$ denotes the Euclidean norm for vectors and the spectral norm for matrices. \sa{Given $r>0$ and $\bar x\in\reals^n$, let $\cB_r(\bar x)\triangleq\{x:~\norm{x-\bar x}<r\}$ denote the open ball centered at $\bar x$ with radius $r>0$, and $\bar\cB_r(\bar x)$ denotes its closure}. Given a closed set $\mathcal{C} \subseteq \mathbb{R}^n$, the distance function from 
$x \in \mathbb{R}^n$ to the set $\mathcal{C}$ is defined by
\begin{align*}
{\rm dist}(x, \mathcal{C}) \triangleq \min_{x' \in \mathcal{C}} \| x - x'\|.
\end{align*}
For a closed convex cone $\mathcal{K} \subseteq \mathbb{R}^m$, \sa{$\cK^\circ\triangleq -\cK^*$ is the polar cone of $\mathcal{K}$,} where $\mathcal{K}^*$ is the dual cone, i.e.,
\[
\mathcal{K}^* \triangleq \{ y \in \mathbb{R}^m : \langle y,x\rangle \ge 0,\ \forall x \in \mathcal{K} \}.
\]
\jw{For a point $x \in \mathbb{R}^m$, we denote the \sa{Euclidean} projection of $x$ onto the cone $\cK$ by $\Pi_{\cK}(x)$.}
In the text, we use ``iff'' to abbreviate ``if and only if''. \jw{For two nonnegative sequences \(\{a_k\}_{k \ge 0}\) and \(\{b_k\}_{k \ge 0}\), we write \(a_k=\mathcal{O}(b_k)\) if there exist constants \(C>0\) and \(k_0\ge 0\) such that \(a_k\le Cb_k\) for all \(k\ge k_0\); \(a_k=\Omega(b_k)\) if there exist constants \(c>0\) and \(k_0\ge 0\) such that \(a_k\ge cb_k\) for all \(k\ge k_0\); and \(a_k=\Theta(b_k)\) if both \(a_k=\mathcal{O}(b_k)\) and \(a_k=\Omega(b_k)\).} \sa{For a vector-valued map $g:\reals^n\to\reals^m$ that is differentiable at $x$, $\grad g(x)\in\reals^{m\times n}$ denotes the Jacobian matrix of $g$ at $x$.}
\section{Problem Formulation and Preliminaries}
In this paper, we consider a 
class of convex optimization problems subject to nonlinear conic constraints in the following form:
\begin{equation}\label{eq:conic-problem} \tag{NCP}
-\infty<f^\star=\min_{x\in \mathcal{X}}\{f(x):\  Ax=b,\quad g(x)\in -\mathcal{K}\},
\end{equation}
where $\mathcal{X}\subseteq\mathbb{R}^n$, $\mathcal{K} \subseteq \mathbb{R}^m$, $A\in\reals^{p\times n}$, $f:\reals^n\to \reals$, and $g:\reals^n\to\reals^m$ satisfy Assumptions~\ref{as:sets} and~\ref{as:Lipschitz-grads}.
\sa{
\begin{assumption}
\label{as:sets}
    Let $\mathcal{X}\subseteq\mathbb{R}^n$ be a convex \emph{compact} set,
    and $\cK\subset\reals^m$ be a proper cone\footnote{We note that the set $\mathcal{K}$ can also be a Cartesian product of smaller dimensional cones, i.e., $\mathcal{K}=\mathcal{K}_1 \times \dots \times \mathcal{K}_\ell$ for some $\ell >1$, where the dual cone $\mathcal{K}^* = \mathcal{K}_1^* \times \dots \times \mathcal{K}_\ell^*$.}, i.e., a closed,
convex, pointed cone with nonempty interior. Moreover, let $A\in\mathbb{R}^{p\times n}$ satisfy $\Rank(A)=p<n$.
\end{assumption}}
\begin{assumption}
\label{as:Lipschitz-grads}
    Consider the problem \eqref{eq:conic-problem} with some proper cone $\cK\subset\reals^m$. Let $f:\reals^n\to \reals$ be convex on $\cX$ with modulus $\mu\geq 0$ and $g:\reals^n\to \reals^m$ be $\cK$-convex\footnote{See \cite[Section 3.6.2]{boyd2004convex} for details of $\mathcal{K}$-convexity.} on $\cX$. Suppose both $f$ and $g$ are continuously differentiable on an open set containing $\cX$ such that $\grad f$ and $\grad g$ are  Lipschitz on $\cX$, i.e., there exist constants $L_f,L_g\geq 0$ such that 
    $$\norm{\grad f(x')-\grad f(x)}\leq L_f \norm{x'-x}, \qquad \norm{\grad g(x')-\grad g(x)}\leq L_g \norm{x'-x},\quad\forall~x',x\in\cX.$$
\end{assumption}
\sa{Trivially, we can conclude that $g$ is Lipschitz on $\cX$ since 
the Jacobian $\nabla g: \mathbb{R}^n \rightarrow \mathbb{R}^{m \times n}$ is continuous and $\cX$ is compact.}
\begin{definition}
\label{def:Lipschitz-g}
Let $C_g>0$ denote the smallest Lipschitz constant of $g$ over $\mathcal{X}$, i.e., 
\begin{align} \label{eq:def_L_g}
\| g(x)-g(x')\| \le \sa{C_g}\|x - x'\|,\quad \forall\ x,x' \in \mathcal{X};
\end{align}
hence, it holds that $C_g\leq B_g$, where
\begin{align} \label{eq:def_B_g}
B_g \triangleq \max_{x \in \mathcal{X}} \| \nabla g(x) \|.
\end{align}
\end{definition}
\sa{
Let $\cY\triangleq\mathbb{R}^p \times \cK^*$ denote the dual domain. By introducing multipliers $v\in\reals^p$ and $\lambda \in \mathcal{K}^*\subset\reals^m$, we define the coupling function\footnote{\jw{For any $\lambda \in \cK^*$, the function \(x\mapsto \langle \lambda,g(x)\rangle\) is convex due to $\cK$-convexity of g.}} $\Phi: 
\cX\times\cY \rightarrow \mathbb{R}$ such that for any given 
$x\in\cX$ and $y=(v,\lambda)\in\cY$,}
\begin{align}
\label{eq:ncc-coupling}
    \Phi(x,y) \triangleq f(x) + \fprod{v,~Ax-b} + \langle\lambda,~g(x)\rangle;
\end{align} 
hence, 
\eqref{eq:conic-problem} 
can be equivalently written as the following minimax optimization problem:
\begin{align} \label{eq:minimax-pro} \tag{min-max}
\min_{x\in \mathbb{R}^n} \max_{y \in \mathbb{R}^{p} \times \mathbb{R}^{m}}\mathcal{L}(x,y),\quad\mbox{where}\quad \mathcal{L}(x,y) \triangleq 
\iota_{\mathcal{X}}(x) + \Phi(x,y) - \sa{\iota_{\cY}(y)}.
\end{align}
\begin{definition}
    Let $\mathcal{Z}^\star \subseteq \mathcal{Z} \triangleq \sa{\mathcal{X} \times \cY}$ denote the set of saddle points for \eqref{eq:minimax-pro}, i.e.,
\begin{align*}
  (x^\star,y^\star)\in\cZ^\star \iff \mathcal{L}(x^\star,y) \le  \mathcal{L}(x^\star,y^\star) \le \mathcal{L}(x,y^\star),\quad \sa{\forall\ x\in\reals^n,\quad\forall\ y\in\reals^p\times\reals^m}.
\end{align*}
Furthermore, let $\cX^\star\subset\cX$ and $\cY^\star\subset\cY$ denote the sets of primal and dual optimal solutions to \eqref{eq:conic-problem}.
\end{definition}
\begin{assumption}
\label{as:saddle-point}
     Suppose that $\cZ^\star$ is non-empty.
\end{assumption}
\begin{lemma}
\label{lem:product-saddle-set}
    Assumption~\ref{as:saddle-point} \sa{implies that} $\cX^\star\neq\emptyset$, $\cY^\star\neq\emptyset$, and strong duality holds for \eqref{eq:conic-problem}; indeed, $\cZ^\star=\cX^\star\times\cY^\star$, i.e., $(x^\star,y^\star) \in \mathcal{Z}^\star$ if and only if $x^\star\in\cX^\star$ and $y^\star\in\cY^\star$. Furthermore, \sa{whenever Assumption~\ref{as:saddle-point} holds, 
    $(x^\star,y^\star) \in \mathcal{Z}^\star$ iff $(x^\star,y^\star)$ is a KKT point.}
\end{lemma}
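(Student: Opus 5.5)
The plan is to unpack the saddle-point inequalities directly and then use the standard Lagrangian duality chain. Fix $(x^\star,y^\star)\in\cZ^\star$ with $y^\star=(v^\star,\lambda^\star)$. Since $\mathcal{L}(x^\star,y^\star)$ is finite, we must have $x^\star\in\cX$ and $y^\star\in\cY$.

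\textbf{Step 1: a saddle point gives a primal solution and zero complementarity.} Consider the left inequality $\mathcal{L}(x^\star,y)\le\mathcal{L}(x^\star,y^\star)$ for all $y\in\cY$.
\begin{itemize}
\item Varying $v$ over $\reals^p$ forces $Ax^\star=b$.
\item Varying $\lambda$ along rays in $\cK^*$ forces $\langle\lambda,g(x^\star)\rangle\le 0$ for all $\lambda\in\cK^*$. By $\cK^{**}=\cK$ (closed convex cone), this gives $g(x^\star)\in-\cK$.
\item Taking $\lambda=0$ and $\lambda=2\lambda^\star$ gives $\langle\lambda^\star,g(x^\star)\rangle=0$.
\end{itemize}
Hence $\mathcal{L}(x^\star,y^\star)=f(x^\star)$. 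The right inequality gives, for any feasible $x$, $f(x^\star)\le\Phi(x,y^\star)\le f(x)$, since $\langle\lambda^\star,g(x)\rangle\le0$. So $x^\star\in\cX^\star$ and $f(x^\star)=f^\star$.

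\textbf{Step 2: strong duality and a dual solution.} Define the dual function $d(y)=\min_{x\in\cX}\Phi(x,y)$; this minimum is attained because $\cX$ is compact and $\Phi$ is continuous. Weak duality $d(y)\le f^\star$ holds for all $y\in\cY$ by the same sign argument as above. The right saddle inequality gives $d(y^\star)=f(x^\star)=f^\star$. Therefore $y^\star\in\cY^\star$ and strong duality holds. In particular $\cX^\star\ne\emptyset$ and $\cY^\star\ne\emptyset$, and $\cZ^\star\subseteq\cX^\star\times\cY^\star$.

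\textbf{Step 3: the product structure.} For the converse inclusion, take any $\bar x\in\cX^\star$ and $\bar y=(\bar v,\bar\lambda)\in\cY^\star$. Then
$$f^\star=d(\bar y)\le\Phi(\bar x,\bar y)=f(\bar x)+\langle\bar\lambda,g(\bar x)\rangle\le f(\bar x)=f^\star,$$
so $\langle\bar\lambda,g(\bar x)\rangle=0$. It follows that:
\begin{itemize}
\item $\Phi(\bar x,y)=f(\bar x)+\langle\lambda,g(\bar x)\rangle\le f^\star=\Phi(\bar x,\bar y)$ for all $y\in\cY$;
\item $\Phi(x,\bar y)\ge d(\bar y)=f^\star$ for all $x\in\cX$.
\end{itemize}
These are exactly the saddle inequalities, so $\cZ^\star=\cX^\star\times\cY^\star$.

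\textbf{Step 4: equivalence with KKT points.} The KKT conditions are:
\begin{itemize}
\item $0\in\grad f(x)+A^\top v+\grad g(x)^\top\lambda+\cN_\cX(x)$;
\item $Ax=b$;
\item $g(x)\in-\cK$, $\lambda\in\cK^*$, and $\langle\lambda,g(x)\rangle=0$.
\end{itemize}
For the forward direction, the saddle inequality says $x^\star$ minimizes the convex differentiable function $\Phi(\cdot,y^\star)$ over $\cX$, which gives the stationarity inclusion. Convexity of $\Phi(\cdot,y^\star)$ holds because $\lambda^\star\in\cK^*$ and $g$ is $\cK$-convex. Together with Step 1 this yields all KKT conditions. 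For the reverse direction, the stationarity inclusion plus convexity of $\Phi(\cdot,y)$ gives the right saddle inequality. The feasibility and complementarity conditions give the left one, by the computation in Step 3.

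The main subtlety is Step 1, namely extracting conic feasibility from the maximization over $\lambda\in\cK^*$ via the bipolar identity. This, and the attainment of the minimum in $d$ via compactness of $\cX$, are the only nontrivial ingredients.
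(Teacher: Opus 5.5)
Your proof is correct, and it is the standard Lagrangian-duality argument the paper alludes to when it omits the proof as ``standard.'' Two points you leave implicit are routine: $x^\star\in\cX$ and $y^\star\in\cY$ hold by the definition $\cZ^\star\subseteq\cZ$, and in Step 3 the saddle inequalities for $x\notin\cX$ or $y\notin\cY$ hold trivially because $\cL=\pm\infty$ there.
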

\begin{proof}
    This result is standard; therefore, we omit its proof.
\end{proof}
\subsection{A special case: convex QCQPs}
The problem class \eqref{eq:conic-problem} includes convex QCQPs as a special case; indeed, consider
\begin{equation}\label{eq: qcqp-pro} \tag{QCQP}
-\infty<f^\star=\min_{x\in \mathcal{X}}\Big\{f(x):\ Ax=b, \quad g_i(x)\le 0,\quad i\in[m]\Big\},
\end{equation}
where
\[
f(x)=\frac12 x^\top Q_0 x + q_0^\top x + r_0,\qquad
g_i(x)=\frac12 x^\top Q_i x + q_i^\top x + r_i,\quad i\in[m]
\]
such that 
\sa{$Q_i\in 
\mathbb{S}^n_+$ for $i\in\{0,\dots,m\}$} are positive semidefinite, 
$\mathcal{X}\subseteq\mathbb{R}^n$ is a simple convex \emph{compact} set, e.g., a box or a simplex, the matrix $A \in \mathbb{R}^{p \times n}$ has full row rank, and $f$ is convex with modulus $\mu \ge 0$, where \sa{the case with $\mu=0$ corresponds to the mere convexity of $f$.} We also assume that the feasible set \sa{$\mathcal{X}_{\rm feas}\triangleq\{x\in\cX:\ Ax=b, \; g_i(x)\le 0,\; i\in[m]\}$} is non-empty. 

For \eqref{eq: qcqp-pro},  
{\small 
$\grad g(x)=\begin{bmatrix}
    (Q_1x+q_1)^\top\\
    \vdots\\
    (Q_m x+q_m)^\top
\end{bmatrix}$} is Lipschitz 
with constant $L_g = (\sum_{i=1}^m \| Q_i \|^2)^{1/2}$ since
{\footnotesize
\begin{align*}
\left( \sum_{i\in[m]} \| Q_i \|^2 \right)^{1/2}\geq \sqrt{\lambda_{\rm max}\left(\sum_{i\in[m]} Q_i^2\right)}=\left\|
\begin{bmatrix}
Q_1, \dots, Q_m
\end{bmatrix}
\right\|;
\end{align*}}%
and $\nabla f$ is Lipschitz continuous with the constant $L_f = \|Q_0\|$; moreover, if $\mathcal{X} \subseteq \{x\in \mathbb{R}^n: \|x\| \le r\}$ for some $r >0$, then $B_g = \max_{x \in \mathcal{X}} \| \nabla g(x) \| \le \| [q_1, \dots, q_m] \| + r\cdot \sqrt{\sum_{i=1}^m \| Q_i \|^2}$.

For the problem \eqref{eq: qcqp-pro}, as a special case of the general conic formulation in \eqref{eq:conic-problem}, 
the cone is set to $\mathcal{K}=\mathbb{R}^m_+$; hence,
$\mathcal{K}^*=\mathbb{R}^m_+$. Accordingly, the dual domain becomes $\mathcal{Y}=\mathbb{R}^p\times\mathbb{R}^m_+$. The Lagrangian function
$\mathcal{L}$ defined in~\eqref{eq:minimax-pro} therefore reduces to
\begin{align*}
\mathcal{L}(x,y)=
\iota_{\mathcal{X}}(x)+f(x)+
\langle v,Ax-b\rangle+
\langle \lambda,g(x)\rangle-
\iota_{\mathbb{R}^m_+}(\lambda),
\end{align*}
where $g:\mathbb{R}^n\to\mathbb{R}^m$ is given by
$g(x)=[g_i(x)]_{i\in[m]}$. Equivalently, $\mathcal{L}(x,y)
=\iota_{\mathcal{X}}(x)+\Phi(x,y)- \iota_{\mathbb{R}^m_+}(\lambda)$,
where the coupling function $\Phi$ in~\eqref{eq:ncc-coupling} takes the following form:
\begin{align}
\Phi(x,y)&=
f(x)+v^\top(Ax-b)+\lambda^\top g(x)\nonumber\\
&=
\frac12 x^\top
\Big(Q_0+\sum_{i\in[m]}\lambda_i Q_i\Big)x+
\Big(q_0+\sum_{i\in[m]}\lambda_i q_i\Big)^\top x+
r_0+\sum_{i\in[m]}\lambda_i r_i+v^\top(Ax-b).
\label{eq:qcqp-coupling}
\end{align}

\subsection{Bregman distances}
\begin{definition}
\label{def:bregman}
Let $\varphi_{p}: \mathbb{R}^n \to\mathbb R$ and $\varphi_{d}: \mathbb{R}^p \times \mathbb{R}^m \to\mathbb R$ be differentiable
on open sets containing $\mathcal{X}$ and $\cY$ respectively. Assume that $\varphi_{p}$ and $\varphi_{d}$
\sa{are both closed and $1$-strongly convex functions}. Let $\mathbf{D}_{p}: \mathbb{R}^n \times \mathbb{R}^n \to\mathbb R_{+}$
denote the Bregman distance
associated with $\varphi_{p}$, i.e.,
\[
\mathbf{D}_{p}(x,\bar x)\triangleq
\varphi_{p}(x)-\varphi_{p}(\bar x)
-\langle \nabla\varphi_{p}(\bar x),\,x-\bar x\rangle,\quad\sa{\forall~x,\bar x\in\cX}.
\]
The Bregman distance $\mathbf{D}_{d}: (\mathbb{R}^p \times \mathbb{R}^m)\times(\mathbb{R}^p \times \mathbb{R}^m)\to\mathbb R_{+}$ associated with $\varphi_{d}$ is defined \sa{on $\cY$} analogously. \sa{Furthermore, let $\mathbf{D}:\cZ\times\cZ\to\reals_+$ such that $\mathbf{D}(z,\bar z)=\mathbf{D}_p(x,\bar x)+\mathbf{D}_d(y,\bar y)$ for any $z,\bar z\in\cZ$.}
\end{definition}

By $1$-strong convexity, it holds that $\mathbf{D}_{p}(x,\bar x)\ge \frac12\|x-\bar x\|^{2}$ for all $x\in \mathbb{R}^n$ and $\bar x\in \mathcal{X}$, and
$\mathbf{D}_{d}(y,\bar y)\ge \frac12\|y-\bar y\|^{2}$ for all $y\in \mathbb{R}^p \times \mathbb{R}^m$ and $\bar y\in \sa{\cY}$. 
Throughout the paper, we assume that both $\grad\varphi_p$ and $\grad\varphi_d$ 
\sa{are Lipschitz continuous over the problem domain} with a constant $L_\varphi>0$, i.e., 
\begin{align}\label{eq: L-varphi-bregman}
\begin{split}
\| \nabla \varphi_p(x) - \nabla \varphi_p(\bar x) \| &\le L_\varphi \| x - \bar x\|,\quad \forall x,\bar x \in \mathcal{X},\\
\| \nabla \varphi_d(y) - \nabla \varphi_d(\bar y) \| &\le L_\varphi \| y - \bar y\|, \quad \forall y,\bar y \in \sa{\cY}; 
\end{split}
\end{align}
which implies that
\begin{align}
\label{eq:Bregman-bounds}
 \sa{\frac{1}{2} \le \mathbf{D}_{p}(x,\bar x)/\|x-\bar x\|^{2}  \le \frac{L_\varphi}{2}, \quad \forall x,\bar x \in \mathcal{X}, \quad \frac{1}{2}\le \mathbf{D}_{d}(y,\bar y)/\|y-\bar y\|^{2} \le \frac{L_\varphi}{2}, \quad \forall y,\bar y \in \sa{\cY}.} 
\end{align}

\begin{remark}
    \sa{\(\varphi_p(x)=\frac12\norm{x}^2\) and \(\varphi_d(y)=\frac12\norm{y}^2\) satisfy above assumptions with \(L_\varphi = 1\). Therefore, the Euclidean setting is a special case of the Bregman setup considered in this paper.}
\end{remark}

\section{Introduction to APDB for Nonlinear Conic Programs}
\sa{The coupling term $\Phi$ in \eqref{eq:minimax-pro} formulation corresponding to \eqref{eq: qcqp-pro} and \eqref{eq:conic-problem} is given in \eqref{eq:qcqp-coupling} and \eqref{eq:ncc-coupling}, respectively; since these coupling functions are \textit{not} bilinear in general, 
one cannot use the existing primal-dual algorithms designed for minimax optimization problems with bilinear coupling terms,} e.g., \cite{chambolle2011first,chambolle2016ergodic,malitsky2018first}. One natural candidate for tackling minimax optimization with \sa{more general coupling terms} is the recently proposed accelerated primal-dual method (APD) in \cite{hamedani2021primal}, which extends the Chambolle–Pock algorithm in \cite{chambolle2016ergodic} \sa{from bilinear coupling functions to smooth convex-concave couplings. Moreover, in \cite{hamedani2021primal}, the authors also proposed APD with a backtracking step-size search (APDB). The variant in \cite[Algorithm 2.3]{hamedani2021primal} generates a monotonically decreasing sequence of primal step sizes, whereas the variant discussed in \cite[Remark 3.8]{hamedani2021primal} can adaptively determine step sizes based on local curvature information by adopting a step-size search scheme that permits a \emph{non-monotone} sequence of primal step sizes. Both APDB variants are agnostic to the global Lipschitz constants pertaining to the problem; hence, such adaptive step-size selection schemes become essential when it is difficult to estimate certain Lipschitz constants. Moreover, even if those (global) Lipschitz constants can be estimated or known,} they can be very large and lead to overly conservative step sizes.

\begin{algorithm}[htb]
\caption{\apdbxy($x^0,y^0,K$)} 
\label{alg:apdb}
\begin{algorithmic}[1]
\Require \sa{$K\in\integers_+$,} $(x^0,y^0)\in \mathcal X\times \cY$, $\mu\ge0$, $c_\alpha,c_\beta,\delta \ge 0$ with $c_\alpha+c_\beta+\delta \in (0,1)$ 
\Require $\eta\in(0,1)$, $\bar\tau$, $\gamma_0>0$, \sa{$c_{\rm nm}\in\{0,1\}$} 
\State $(x^{-1},y^{-1})\gets(x^0,y^0)$,\; $\tau_{-1}\gets\bar\tau$,\;$\tau_0\gets\bar\tau$,\; $\sigma_{-1}\gets\gamma_0\tau_0$,\; \sa{$(x_{\rm cum},y_{\rm cum})\gets (0,0)$,\; $T\gets 0$}
\For{$k=0,\dots,\sa{K-1}$}
\Loop
\State $\sigma_k \gets \gamma_k\tau_k$,\quad $\theta_k \gets \frac{\sigma_{k-1}}{\sigma_k}$,\quad $\alpha_{k+1}\gets \frac{c_\alpha}{\tau_k}$,\quad $\beta_{k+1}\gets \gamma_0 \frac{c_\beta}{\sigma_k}$ \label{algeq:alphabeta}
\State $s^k \gets (1+\theta_k)\nabla_x \Phi(x^k,y^k)
- \theta_k \nabla_x \Phi(x^{k-1},y^{k-1})$ \label{algeq:s-update}
\State $x^{k+1} \gets 
\mathop{\arg\min}_{x\in\mathcal X} \left\{ \langle s^k,x\rangle + \frac{1}{\tau_k} \textbf{D}_p(x,x^k)\right\}$ \label{algeq:x-update}
\State $y^{k+1} \gets 
\mathop{\arg\min}_{y\in \mathbb{R}^p \times \cK^*} 
\left\{
- \langle \nabla_y \Phi(x^{k+1},y^k),y\rangle
+ \frac{1}{\sigma_k} \textbf{D}_d(y,y^k) \right\}$ \label{algeq:y-update}
\If{$E_k(x^{k+1},y^{k+1}) 
\le -\frac{\delta}{\tau_k}\textbf{D}_p(x^{k+1},x^k)
- \frac{\delta}{\sigma_k}\textbf{D}_d(y^{k+1},y^k)$} 
\label{algeq:test}
    \State \textbf{break}
\Else
    \State $\tau_k \gets \eta\tau_k$
\EndIf
\EndLoop
\State \sa{$t_k\gets \sigma_k/\sigma_0$,\quad $(x_{\rm cum},y_{\rm cum})\gets(x_{\rm cum},y_{\rm cum})+t_k(x^{k+1},y^{k+1})$,\quad $T\gets T+t_k$}
\State $\gamma_{k+1}\gets \gamma_k(1+\mu\tau_k)$,\quad 
\sa{$\tau_{k+1}\gets \tau_k \sqrt{\frac{\gamma_k}{\gamma_{k+1}}\left(1+c_{\rm nm}\cdot\frac{\tau_k}{\tau_{k-1}}\right)}$} \label{algeq:tau}
\EndFor
\State \sa{$(\bar x^K,\bar y^K)\gets (x_{\rm cum},y_{\rm cum})/T$}
\State \Return \sa{$(\bar x^K,\bar y^K)$}
\end{algorithmic}
\end{algorithm}
\subsection{\texttt{APDB-xy}: $x$ update first}
\sa{We first present the \apdbxy{} method 
in \Cref{alg:apdb} --we set $\cK^*=\mathbb{R}^m_+$ when \apdbxy{} is applied to \eqref{eq: qcqp-pro} since $\cK=\reals^m_+$ for this setting. Specifically, 
\apdbxy{} consists of alternating (Gauss-Seidel type) primal and dual updates, and the primal update uses momentum averaging to achieve acceleration.} To determine suitable step sizes by estimating local Lipschitz constants, \Cref{alg:apdb} involves a backtracking step-size search; \sa{indeed, the candidate point $(x^{k+1},y^{k+1})$ is accepted if the condition in \texttt{line}~\ref{algeq:test} of \apdbxy{} holds, where the test function $E_k:\cX\times\cY\to\reals$ is defined as}
\begin{align}
E_k(x,y)
\;\triangleq\;&
\frac{1}{2\alpha_{k+1}}
\left\|
\nabla_x \Phi(x,y)
-
\nabla_x \Phi(x,y^k)
\right\|^2
-
\frac{1}{\sigma_k}
\textbf{D}_{d}(y,y^k)
\nonumber\\
&+
\frac{1}{2\beta_{k+1}}
\left\|
\nabla_x \Phi(x,y^k)
-
\nabla_x \Phi(x^k,y^k)
\right\|^2-
\left(
\frac{1}{\tau_k}
-
\theta_k(\alpha_k+\beta_k)
\right)
\textbf{D}_{p}(x,x^k).
\label{eq: backtracking}
\end{align}
Below we provide convergence results of \texttt{APDB-xy} for \eqref{eq:minimax-pro}, some of \sa{which are established in} \cite[Corollary 4.5]{hamedani2021primal} and \cite[Proof of Corollary 4.5]{hamedani2021primal}.
\begin{lemma}\label{lem:apdbxy-1}
\sa{Suppose that \cref{as:sets,as:Lipschitz-grads,as:saddle-point} hold.} 
\sa{Consider \eqref{eq:minimax-pro} with
$\mu\geq 0$. Initialized from an arbitrary initial point $(x^0,y^0)\in\cX\times\cY$, the dual iterate sequence $\{y^k\}_{k\geq 0}$ generated by \emph{\apdbxy} in \Cref{alg:apdb} for any $c_{\rm nm}\in\{0,1\}$ is} bounded. Specifically, it holds for all $k\geq 0$ that $\textbf{D}_{d}(y^\star,y^k)\le \sigma_0 \Delta(x^\star,y^\star)$ for all $(x^\star,y^\star)\in \cZ^\star$, where 
\begin{align}
\label{eq:Delta}
    \Delta(x,y) \triangleq \frac{1}{\tau_0} \mathbf{D}_p(x,x^0) + \frac{1}{\sigma_0} \mathbf{D}_d(y,y^0),\quad \forall~(x,y)\in \cX \times \cY.
\end{align} 
Moreover, \sa{let $\bar B \triangleq \inf\Big\{\| {y}^\star \| + \sqrt{2\gamma_0 {\mathbf{D}_p(x^\star,x^0) + 2 \mathbf{D}_d(y^\star,y^0) }}:\ (x^\star,y^\star)\in \cZ^\star\Big\}$. Then, it holds that $\sup_{k\geq 0}\norm{y^k}\leq \bar B$, and there exists $y^\star\in\cY^\star$ such that $\norm{y^\star}\leq\bar B$.} 
%
\end{lemma}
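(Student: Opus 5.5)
The plan is to derive boundedness of the dual iterates from the one-step energy inequality underlying the analysis of APDB in \cite{hamedani2021primal}, specialized to a saddle point. I would first recall (or re-derive) the basic per-iteration estimate from \cite[proof of Theorem 2.2/Corollary 4.5]{hamedani2021primal}. For every $(x,y)\in\cX\times\cY$ and every accepted iteration $k$, it states that
\[
t_k\big(\cL(x^{k+1},y)-\cL(x,y^{k+1})\big)\le Q_k(z)-R_{k+1}(z)+t_k E_k(x^{k+1},y^{k+1})+\frac{t_k\delta}{\tau_k}\mathbf D_p(x^{k+1},x^k)+\frac{t_k\delta}{\sigma_k}\mathbf D_d(y^{k+1},y^k).
\]
Here $Q_k(z)=\frac{t_k}{\tau_k}\mathbf D_p(x,x^k)+\frac{t_k}{\sigma_k}\mathbf D_d(y,y^k)+(\text{momentum cross terms})$, and $R_{k+1}$ is its shifted counterpart. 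The step-size rules $\sigma_k=\gamma_k\tau_k$, $\theta_k=\sigma_{k-1}/\sigma_k$, $t_k=\sigma_k/\sigma_0$, together with the update $\gamma_{k+1}=\gamma_k(1+\mu\tau_k)$ and the $\tau_{k+1}$ update (for either value of $c_{\rm nm}$), guarantee $R_{k+1}\ge Q_{k+1}$. The backtracking test in \texttt{line}~\ref{algeq:test} makes the last three terms nonpositive. Note that the test is eventually satisfied in finitely many inner loops, by Lipschitz continuity of $\grad_x\Phi$ in $x$ and in $y$ on bounded sets. Summing over $k=0,\dots,K-1$ then telescopes to
\[
\sum_{k=0}^{K-1}t_k\big(\cL(x^{k+1},y)-\cL(x,y^{k+1})\big)+\frac{t_{K}}{\sigma_{K}}\mathbf D_d(y,y^{K})+(\text{nonnegative residual})\le \Delta(x,y),
\]
where $\Delta$ is as in \eqref{eq:Delta}. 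This uses $t_0=1$ and the convention $x^{-1}=x^0$, $y^{-1}=y^0$, which kills the initial cross terms.

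Next I plug in $(x,y)=(x^\star,y^\star)\in\cZ^\star$. By the saddle-point inequalities in the definition of $\cZ^\star$, each summand $\cL(x^{k+1},y^\star)-\cL(x^\star,y^{k+1})\ge0$, since both iterates are feasible in $\cX\times\cY$. Since $t_K/\sigma_K=1/\sigma_0$, this gives $\frac1{\sigma_0}\mathbf D_d(y^\star,y^K)\le\Delta(x^\star,y^\star)$ for every $K\ge1$, and $K=0$ is trivial. The main obstacle is controlling the residual term: the final-iterate cross term $\langle q_K,\cdot\rangle$ involving $\grad_x\Phi(x^K,y^K)-\grad_x\Phi(x^{K-1},y^{K-1})$ must be absorbed. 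I would follow \cite{hamedani2021primal}: bound it by Young's inequality with weights $\alpha_K,\beta_K$, and absorb it into $\frac{t_K}{\tau_K}\mathbf D_p(x^\star,x^K)$ using $c_\alpha+c_\beta+\delta<1$. This leaves a positive multiple of $\mathbf D_p$ together with the full $\mathbf D_d$ term.

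Finally, the norm bound follows from the strong convexity lower bound $\mathbf D_d\ge\frac12\|\cdot\|^2$. Indeed, $\|y^k-y^\star\|^2\le 2\sigma_0\Delta(x^\star,y^\star)=2\gamma_0\mathbf D_p(x^\star,x^0)+2\mathbf D_d(y^\star,y^0)$, using $\sigma_0/\tau_0=\gamma_0$. The triangle inequality then gives $\|y^k\|\le\|y^\star\|+\sqrt{2\gamma_0\mathbf D_p(x^\star,x^0)+2\mathbf D_d(y^\star,y^0)}$, and taking the infimum over $\cZ^\star$ yields $\sup_k\|y^k\|\le\bar B$.

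For the existence claim, the expression defining $\bar B$ is continuous in $(x^\star,y^\star)$ and coercive in $y^\star$. Also, $\cZ^\star=\cX^\star\times\cY^\star$ is closed by \Cref{lem:product-saddle-set}, and $\cX^\star\subseteq\cX$ is compact. Hence the infimum is attained at some $(x^\star,y^\star)$, and for that point $\|y^\star\|\le\bar B$ trivially.
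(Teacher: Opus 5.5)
Your proposal is correct and follows essentially the same route as the paper. The paper defers the bound $\mathbf D_d(y^\star,y^k)\le\sigma_0\Delta(x^\star,y^\star)$ to the telescoped energy inequality from the proof of \cite[Corollary 4.5]{hamedani2021primal} (equivalently, \eqref{eq: 4.16apd} evaluated at a saddle point), then uses $\mathbf D_d\ge\frac12\|\cdot\|^2$, the triangle inequality, and closedness of $\cY^\star$ for the existence claim; your attainment-of-the-infimum argument via compactness of $\cX^\star$ and closedness of $\cZ^\star$ is only a slightly heavier version of that last step.
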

\begin{proof}
    See the proof of \cite[Corollary 4.5]{hamedani2021primal}. \sa{According to \cref{lem:product-saddle-set}, $\cZ^\star$ has a product form $\cX^\star\times\cY^\star$ such that $\cY^\star$ is a closed set; therefore, we can conclude the existence of $y^\star\in\cY^\star$ such that $\norm{y^\star}\leq\bar B$.}
\end{proof}
\sa{
\begin{definition}
\label{def:LxxLxy}
Let $\bar\cY\triangleq\{y\in\cY:\ \norm{y}\leq\bar B\}$. Consider $\Phi:\cX\times\cY\to\reals$ defined in \eqref{eq:ncc-coupling}, let $L_{xx}$ 
denote the Lipschitz constant of $\nabla_x \Phi(\cdot,y)$ over the set $\mathcal{X}$ uniformly for all $y\in\bar\cY$, and $L_{xy}$ denotes the Lipschitz constant of $\nabla_x \Phi(x,\cdot)$ over the set $\bar\cY$ uniformly for all $x \in \mathcal{X}$, i.e.,
\begin{align*}
    \norm{\grad_x\Phi(x',y)-\grad_x\Phi(x,y)}\leq L_{xx}\norm{x'-x},\quad\forall~x',x\in\cX,\ y\in\bar\cY,\\
    \norm{\grad_x\Phi(x,y')-\grad_x\Phi(x,y)}\leq L_{xy}\norm{y'-y},\quad\forall~y',y\in\bar\cY,\ x\in\cX.
\end{align*}
\end{definition}
\begin{remark}
    It follows from Assumption~\ref{as:Lipschitz-grads} and Lemma~\ref{lem:apdbxy-1} that $L_{xx}$ and $L_{xy}$ in Definition~\ref{def:LxxLxy} satisfy $L_{xx} = L_f+\bar B L_g$ and $L_{xy}=\norm{A}+B_g$. Specifically, for \eqref{eq: qcqp-pro}, the constants $L_{xx}$ and $L_{xy}$ have the following form:
\begin{align}
L_{xx} = \| Q_0\| + \bar{B}\cdot \left(\sum_{i\in[m]} \| Q_i \|^2\right)^{1/2}, \quad L_{xy} = \| A \| + B_g.
\end{align}
\end{remark}}%
\begin{lemma}
\label{lem:apdb-xy-bounds}
\sa{Suppose that \cref{as:sets,as:Lipschitz-grads,as:saddle-point} hold.} Consider \eqref{eq:minimax-pro}. For \emph{\apdbxy} in \Cref{alg:apdb} \sa{with any $c_{\rm nm}\in\{0,1\}$,} initialized from an arbitrary point $(x^0,y^0)\in\cX\times \cY$, define a quantity intrinsic to the algorithm:
\begin{align*}
\Psi_1 \triangleq \min \left\{ \frac{\sqrt{c_\alpha (1-\delta)}}{L_{xy} \sqrt{\gamma_0}},~\frac{\sqrt{ c_\beta( 1 - (c_\alpha + c_\beta + \delta))}}{L_{xx}} \right\},
\end{align*}
where the constants $L_{xx}$ and $L_{xy}$ are given in Definition~\ref{def:LxxLxy}. Then, $\tau_k\geq \hat\tau_k\triangleq \hat\tau\sqrt{\gamma_0/\gamma_k}$, where $\hat\tau=\eta\Psi_1$, and $\sigma_k=\gamma_k\tau_k$ for all $k\geq 0$; moreover, for any $K\ge 1$ and $(x,y)\in\cX\times\cY$, it holds that
\begin{align}
\frac{{\gamma_{K-1}}\bigl(1-(c_\alpha+c_\beta)\bigr)}{\sigma_0}
\textbf{D}_{p}(x,x^K)
+
\frac{1}{\sigma_0}
\textbf{D}_{d}(y,y^K)
+
T_K\bigl(\mathcal L(\bar x^K,y)-\mathcal L(x,\bar y^K)\bigr)
\le
\Delta(x,y),
\label{eq: 4.16apd}
\end{align}
where $T_K \triangleq \sum_{k=0}^{K-1} t_k$ such that $t_k = \frac{\sigma_k}{\sigma_0}$ for all $k\geq 0$, and
\[
\bar x^K \triangleq \frac{1}{T_K}\sum_{k=0}^{K-1} t_k x^{k+1},\quad
\bar y^K \triangleq \frac{1}{T_K}\sum_{k=0}^{K-1} t_k y^{k+1}.
\]
\end{lemma}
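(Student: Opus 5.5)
The plan has two parts: a lower bound on the accepted primal stepsizes, and a one-step descent inequality that telescopes into \eqref{eq: 4.16apd}. Both follow the APD analysis of \cite{hamedani2021primal}, specialized to \eqref{eq:minimax-pro}.

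\textbf{Step 1: stepsize lower bound.} I first show that the backtracking test in line~\ref{algeq:test} is automatically satisfied once $\tau_k\le\Psi_1\sqrt{\gamma_0/\gamma_k}$. By \Cref{lem:apdbxy-1}, every iterate $y^k$, and hence every candidate $y^{k+1}$ produced during backtracking (I will verify this via the same potential argument), lies in $\bar\cY$. So the constants $L_{xy}$ and $L_{xx}$ of \Cref{def:LxxLxy} apply, and I bound the terms of $E_k$ in \eqref{eq: backtracking}:
\begin{itemize}
\item $\frac{1}{2\alpha_{k+1}}\|\nabla_x\Phi(x,y)-\nabla_x\Phi(x,y^k)\|^2\le \frac{\tau_k L_{xy}^2}{c_\alpha}\mathbf{D}_d(y,y^k)$;
\item $\frac{1}{2\beta_{k+1}}\|\nabla_x\Phi(x,y^k)-\nabla_x\Phi(x^k,y^k)\|^2\le\frac{\sigma_k L_{xx}^2}{\gamma_0 c_\beta}\mathbf{D}_p(x,x^k)$.
\end{itemize}
Here I use $\mathbf{D}\ge\frac12\|\cdot\|^2$ from \eqref{eq:Bregman-bounds}. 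I also bound $\theta_k(\alpha_k+\beta_k)\le (c_\alpha+c_\beta)/\tau_k$, using $\theta_k=\sigma_{k-1}/\sigma_k$ together with the update $\tau_{k+1}$ in line~\ref{algeq:tau}; this yields $\tau_{k-1}\sigma_{k-1}\ge$ the appropriate multiple of $\tau_k\sigma_k$. Substituting $\sigma_k=\gamma_k\tau_k$, the test holds whenever
\begin{align*}
\tau_k^2\gamma_k L_{xy}^2\le c_\alpha(1-\delta)\quad\text{and}\quad \tau_k^2\gamma_k L_{xx}^2\le \gamma_0 c_\beta(1-c_\alpha-c_\beta-\delta),
\end{align*}
i.e. whenever $\tau_k\le\Psi_1\sqrt{\gamma_0/\gamma_k}$. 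Since each rejection shrinks $\tau_k$ by the factor $\eta$, the accepted stepsize satisfies $\tau_k\ge\eta\Psi_1\sqrt{\gamma_0/\gamma_k}$. One caveat: the initial trial $\tau_k$ could already lie below this threshold (e.g.\ $\bar\tau$ is small). I would handle this by induction using line~\ref{algeq:tau}, taking $\hat\tau=\min\{\bar\tau,\eta\Psi_1\}$ as implicitly intended.

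\textbf{Step 2: one-step inequality.} I apply the three-point (Bregman prox) inequalities for the $x$- and $y$-updates at an arbitrary $(x,y)\in\cX\times\cY$. Convexity of $\Phi(\cdot,y)$ with modulus $\mu$ and concavity (here linearity) in $y$ then give
\[
\mathcal L(x^{k+1},y)-\mathcal L(x,y^{k+1})\le \tfrac{1}{\tau_k}[\mathbf{D}_p(x,x^k)-\mathbf{D}_p(x,x^{k+1})]-\mu\mathbf{D}_p(x,x^{k+1})+\tfrac{1}{\sigma_k}[\mathbf{D}_d(y,y^k)-\mathbf{D}_d(y,y^{k+1})]+(\text{momentum terms}).
\]
The momentum terms are $q_{k+1}:=\nabla_x\Phi(x^{k+1},y^{k+1})-\nabla_x\Phi(x^k,y^k)$ paired as $\langle q_{k+1},x^{k+1}-x\rangle-\theta_k\langle q_k,x^k-x\rangle$, plus a cross term. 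I split $q_k$ through the intermediate point $(x^k,y^{k-1})$ and apply Young's inequality with weights $\alpha_k,\beta_k$. This produces exactly the quantities in $E_k$, so the accepted test absorbs them.

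\textbf{Step 3: telescoping.} I multiply the step-$k$ inequality by $t_k=\sigma_k/\sigma_0$. The relations $\frac{t_k}{\tau_k}(1+\mu\tau_k)=\frac{t_{k+1}}{\tau_{k+1}}$ (from $\gamma_{k+1}=\gamma_k(1+\mu\tau_k)$), $t_k/\sigma_k=1/\sigma_0$, and $t_k\theta_k=t_{k-1}$ make the sums telescope. Summing over $k=0,\dots,K-1$ and applying Jensen's inequality to the convex-concave $\mathcal L$ at the weighted averages gives \eqref{eq: 4.16apd}. The leftover final inner product $\langle q_K,x^K-x\rangle$ is bounded by Young's inequality, which costs the factor $(1-(c_\alpha+c_\beta))$ on $\mathbf{D}_p(x,x^K)$. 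The coefficient $\gamma_{K-1}/\sigma_0=1/\tau_{K-1}\cdot t_{K-1}$ then comes out of the bookkeeping.

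\textbf{Main obstacle.} The hardest part is the non-monotone case $c_{\rm nm}=1$. There $\tau_k$ may increase, so the relations $t_k\theta_k=t_{k-1}$ and $\theta_k\alpha_k\le c_\alpha/\tau_k$ must be rechecked against the modified update in line~\ref{algeq:tau}. I would follow \cite[Remark 3.8]{hamedani2021primal}, showing that the needed inequality $\tau_{k-1}\gamma_{k-1}\ge \tau_k\gamma_k\cdot(\cdot)$ degrades only by a factor absorbed into the test function. Otherwise I would cite \cite[Corollary 4.5]{hamedani2021primal} directly for \eqref{eq: 4.16apd}.
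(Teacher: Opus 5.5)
Your three-step outline is the standard APDB analysis, which the paper itself does not reproduce; it only defers to Hamedani--Aybat. Most of your computations check out. The two sufficient conditions in Step~1 reproduce $\Psi_1$ exactly. In Step~3 the terminal Young step costs $t_{K-1}(\alpha_K+\beta_K)=(\gamma_{K-1}c_\alpha+\gamma_0c_\beta)/\sigma_0$, and together with $\gamma_K\ge\gamma_{K-1}$ this gives the stated coefficient. One indexing correction: at step $k$ your Young split produces the terms of $E_{k-1}(x^k,y^k)$, which are absorbed via $t_k\theta_k=t_{k-1}$, not the terms of $E_k$. The one real problem is how you handle $\theta_k(\alpha_k+\beta_k)$, and the "main obstacle" you build on it.

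\textbf{The non-monotone case.} You justify $\theta_k(\alpha_k+\beta_k)\le(c_\alpha+c_\beta)/\tau_k$ through line~\ref{algeq:tau}, via $\tau_k\sigma_k\le\tau_{k-1}\sigma_{k-1}$. That monotonicity holds only for $c_{\rm nm}=0$, because there the accepted $\tau_k$ never exceeds the trial $\tau_{k-1}\sqrt{\gamma_{k-1}/\gamma_k}$. For $c_{\rm nm}=1$, an unreduced trial gives $\tau_k^2\gamma_k=\tau_{k-1}^2\gamma_{k-1}(1+\tau_{k-1}/\tau_{k-2})$. So your argument as set up breaks exactly in the case you worry about, and the proposed fix (a degradation "absorbed into the test function") is not an argument. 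None of this is needed. From line~\ref{algeq:alphabeta},
\[
\theta_k\alpha_k=\frac{\sigma_{k-1}}{\sigma_k}\cdot\frac{c_\alpha}{\tau_{k-1}}=\frac{\gamma_{k-1}}{\gamma_k}\cdot\frac{c_\alpha}{\tau_k},\qquad \theta_k\beta_k=\frac{\sigma_{k-1}}{\sigma_k}\cdot\frac{\gamma_0 c_\beta}{\sigma_{k-1}}=\frac{\gamma_0}{\gamma_k}\cdot\frac{c_\beta}{\tau_k}.
\]
Hence $\gamma_0\le\gamma_{k-1}\le\gamma_k$ gives the bound with no comparison of $\tau_k$ and $\tau_{k-1}$. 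Likewise $t_k\theta_k=t_{k-1}$, $t_k/\sigma_k=1/\sigma_0$ and $\frac{t_k}{\tau_k}(1+\mu\tau_k)=\frac{\gamma_{k+1}}{\sigma_0}=\frac{t_{k+1}}{\tau_{k+1}}$ are exact identities. So \eqref{eq: 4.16apd} holds for any rule that produces the trial stepsize. Line~\ref{algeq:tau} enters only through the trial value in the induction for the lower bound on $\tau_k$. There one needs $\tau_k^\circ\ge\tau_{k-1}\sqrt{\gamma_{k-1}/\gamma_k}$, which holds for both values of $c_{\rm nm}$. The non-monotone case is no harder.

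\textbf{Smaller points.}

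\emph{Rejected candidates.} A rejected candidate $y^{k+1}$ need not lie in $\bar\cY$, and the potential argument cannot show that it does, since that argument uses the accepted test. You do not need it. $\grad_x\Phi(x,\cdot)$ is affine with linear part of norm at most $\norm{A}+B_g=L_{xy}$, and only the accepted $y^k$ enters the $L_{xx}$ term.

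\emph{Order of the argument.} \Cref{lem:apdbxy-1} is itself a consequence of \eqref{eq: 4.16apd}, so run the argument in this order:
\begin{enumerate}
\item Each backtracking loop terminates, because for fixed $y^k$ the local Lipschitz constants are finite.
\item Hence \eqref{eq: 4.16apd} holds for every $K$.
\item Hence $\norm{y^k}\le\bar B$ for all $k$.
\item Only then derive the uniform bound $\tau_k\ge\hat\tau\sqrt{\gamma_0/\gamma_k}$.
\end{enumerate}

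\emph{Strong convexity for $\mu>0$.} The term $-\mu\mathbf{D}_p(x,x^{k+1})$ requires either $\varphi_p=\frac12\norm{\cdot}^2$ (as the paper imposes whenever $\mu>0$) or strong convexity of $f$ relative to $\varphi_p$. For a general Bregman distance, $\mathbf{D}_p\ge\frac12\norm{\cdot}^2$ points the wrong way.

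\emph{Your caveat on $\bar\tau$.} It is correct. Since $\tau_0\le\bar\tau$ always, $\tau_0\ge\eta\Psi_1$ fails whenever $\bar\tau<\eta\Psi_1$. The lower bound holds with $\hat\tau=\min\{\bar\tau,\eta\Psi_1\}$, not with $\eta\Psi_1$ as stated.
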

\begin{proof}
     The proof of this result directly follows from the arguments in \cite[Proof of Corollary 4.5]{hamedani2021primal}.
\end{proof}
\begin{lemma}[Monotone setting]
Under the premise of \cref{lem:apdb-xy-bounds}, consider \emph{\apdbxy} in \Cref{alg:apdb} with $c_{\rm nm}=0$.
For the case $\mu=0$, i.e., $f$ is merely convex, $\gamma_k=\gamma_0$ for $k\geq 0$; hence, $\bar\tau\geq \tau_k\geq\hat\tau$, $\sigma_k=\gamma_0\tau_k$ and $t_k\in \left[\frac{\hat\tau}{\bar\tau}, 1\right]$ for all $k\geq 0$; therefore, $K\hat \tau/\bar\tau\leq T_K\leq K$ for all $K\geq 1$. 
For the case $\mu>0$, i.e., $f$ is strongly convex, we set $\varphi_p(\cdot)=\frac{1}{2}\|\cdot\|^2$. Then, for all $k\geq 1$, $\gamma_k=\Theta(k^2)$, $\tau_k=\Theta(1/k)$, $\sigma_k=\Theta(k)$; hence, $t_k =\Theta(k)$.  Therefore, $T_K =\Theta(K^2)$ for $K\ge 1$; in particular, $T_K \ge \frac{\Gamma^2}{12\mu\sigma_0}K^2$ for $K\ge 2$ with $\Gamma=\mu\hat\tau\sqrt{\gamma_0}$. 
{Moreover, in both cases, within $K\geq 1$ iterations of \emph{\apdbxy},
the total number of backtracking condition evaluations in
\emph{\texttt{line}}~\ref{algeq:test} of \emph{\apdbxy} is bounded by
$K +\log_{1/\eta}({\bar{\tau}}/{\hat\tau})$.}
\label{lem: apdb-sublinear}
\end{lemma}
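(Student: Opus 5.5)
The plan is to read everything off the step-size recursion in \texttt{line}~\ref{algeq:tau} with $c_{\rm nm}=0$, that is, $\tau_{k+1}=\tau_k\sqrt{\gamma_k/\gamma_{k+1}}$ and $\gamma_{k+1}=\gamma_k(1+\mu\tau_k)$, combined with the lower bound $\tau_k\ge\hat\tau\sqrt{\gamma_0/\gamma_k}$ from \cref{lem:apdb-xy-bounds}. One subtlety is that $\tau_k$ in the recursion is the \emph{accepted} step, which is at most the trial value inherited from the previous iteration, since backtracking only shrinks it. Hence $\tau_{k+1}\le\tau_k\sqrt{\gamma_k/\gamma_{k+1}}$ in terms of accepted values, and by induction $\tau_k\sqrt{\gamma_k}\le\bar\tau\sqrt{\gamma_0}$.

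\textbf{Case $\mu=0$.} Here $\gamma_k=\gamma_0$ for all $k$, so the recursion becomes $\tau_{k+1}\le\tau_k$. The sequence $\tau_k$ is therefore nonincreasing, starting from the initial trial value $\bar\tau$. This gives $\bar\tau\ge\tau_k\ge\hat\tau$ and $\sigma_k=\gamma_0\tau_k$. It follows that $t_k=\sigma_k/\sigma_0=\tau_k/\tau_0$. Since $\tau_k\le\tau_0\le\bar\tau$ and $\tau_0\ge\hat\tau$, this ratio lies in $[\hat\tau/\bar\tau,1]$. Summing over $k=0,\dots,K-1$ gives $K\hat\tau/\bar\tau\le T_K\le K$.

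\textbf{Case $\mu>0$.} Set $a_k=\sqrt{\gamma_k}$. The two-sided bound $\hat\tau\sqrt{\gamma_0}/a_k\le\tau_k\le\bar\tau\sqrt{\gamma_0}/a_k$ holds. Then
\[
a_{k+1}^2=a_k^2+\mu\tau_k a_k^2\in\bigl[a_k^2+\Gamma a_k,\ a_k^2+\mu\bar\tau\sqrt{\gamma_0}\,a_k\bigr],
\]
with $\Gamma=\mu\hat\tau\sqrt{\gamma_0}$.

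From the lower bound, $a_{k+1}^2\ge(a_k+\Gamma/2)^2-\Gamma^2/4$. I would sharpen this to the clean linear growth $a_{k+1}\ge a_k+\Gamma/3$ for all $k$. When $a_k\ge\Gamma/3$, this follows from $(a_k+\Gamma/3)^2=a_k^2+\tfrac{2}{3}\Gamma a_k+\Gamma^2/9\le a_k^2+\Gamma a_k$. When $a_k$ is small, it needs a short separate check (or a slightly worse constant). This gives $a_k\ge k\Gamma/3$. From the upper bound, $a_{k+1}\le a_k+\mu\bar\tau\sqrt{\gamma_0}/2$, so $a_k=O(k)$. Together these give $\gamma_k=\Theta(k^2)$, $\tau_k=\Theta(1/a_k)=\Theta(1/k)$, $\sigma_k=\gamma_k\tau_k=\Theta(a_k)=\Theta(k)$, $t_k=\Theta(k)$, and $T_K=\Theta(K^2)$.

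For the explicit constant, I would use $\sigma_k\ge\hat\tau\sqrt{\gamma_0}\,a_k\ge\hat\tau\sqrt{\gamma_0}\,k\Gamma/3=\Gamma^2k/(3\mu)$. This yields
\[
T_K\ge\frac{\Gamma^2}{3\mu\sigma_0}\sum_{k=0}^{K-1}k=\frac{\Gamma^2}{3\mu\sigma_0}\cdot\frac{K(K-1)}{2}\ge\frac{\Gamma^2}{12\mu\sigma_0}K^2,
\]
where the last step uses $K-1\ge K/2$ for $K\ge2$. The Euclidean choice of $\varphi_p$ matters only because it makes \cref{lem:apdb-xy-bounds} apply with $\gamma_{k+1}=\gamma_k(1+\mu\tau_k)$ as stated; it plays no further role here. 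I expect the main obstacle to be this constant bookkeeping, in particular making the induction $a_{k+1}\ge a_k+\Gamma/3$ hold uniformly, including small $a_0$. If that fails, I would start the sum from $k=1$ and absorb the discrepancy using $\sum_{k=1}^{K-1}k\ge K^2/4$.

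\textbf{Backtracking count.} Each failed test multiplies $\tau_k$ by $\eta$. Consider the product $\tau_k\sqrt{\gamma_k}$. The recursion with $c_{\rm nm}=0$ carries the accepted value of this product forward unchanged to the next trial, and each backtrack multiplies it by $\eta$. The product starts at $\bar\tau\sqrt{\gamma_0}$. By \cref{lem:apdb-xy-bounds}, every accepted value is at least $\hat\tau\sqrt{\gamma_0}$, and by the same bound no trial at or below this threshold is rejected. So the total number of backtracks over all $K$ iterations is at most $\log_{1/\eta}(\bar\tau/\hat\tau)$. Adding one successful evaluation per iteration gives the bound $K+\log_{1/\eta}(\bar\tau/\hat\tau)$ in both cases.
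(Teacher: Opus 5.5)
Your argument follows the paper's proof essentially step for step. The ingredients are the two-sided bound $\hat\tau\sqrt{\gamma_0}\le\tau_k\sqrt{\gamma_k}\le\bar\tau\sqrt{\gamma_0}$, the recursions $\gamma_k+\Gamma\sqrt{\gamma_k}\le\gamma_{k+1}\le\gamma_k+\mu\bar\tau\sqrt{\gamma_0\gamma_k}$, and the estimate $\sigma_k\ge\frac{\Gamma}{\mu}\sqrt{\gamma_k}$. Your backtracking count is the paper's telescoping sum rephrased as an invariant of $\tau\sqrt{\gamma}$.

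The one step you leave open is the one the explicit constant rests on: $a_k\ge k\Gamma/3$ for all $k$. You only verify $a_{k+1}\ge a_k+\Gamma/3$ when $a_k\ge\Gamma/3$. This cannot be obtained from the recursion $a_{k+1}^2\ge a_k^2+\Gamma a_k$ alone, because if $a_0\ll\Gamma$ then $a_1\approx\sqrt{\Gamma a_0}<\Gamma/3$. Neither fallback closes it. A worse constant proves a weaker statement than the one claimed. Starting the sum at $k=1$ does not help either, since the possible failure of $a_k\ge k\Gamma/3$ occurs at indices $k\ge1$, not at $k=0$.

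The missing observation is that the small case never arises in this algorithm:
\begin{itemize}
\item $\{\gamma_k\}$ is nondecreasing, so $a_k\ge a_0=\sqrt{\gamma_0}$.
\item $\Gamma/a_0=\mu\hat\tau<1$, because $\hat\tau=\eta\Psi_1\le\eta\sqrt{c_\beta(1-(c_\alpha+c_\beta+\delta))}/L_{xx}<1/L_{xx}$ and $L_{xx}=L_f+\bar{B}L_g\ge L_f\ge\mu$.
\end{itemize}
Hence $a_{k+1}\ge a_k+\Gamma/3$ holds for every $k\ge0$, and your derivation of $T_K\ge\frac{\Gamma^2}{12\mu\sigma_0}K^2$ goes through. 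The paper's ``by induction'' step for $\gamma_k\ge\Gamma^2k^2/9$ relies on this same fact implicitly at its base case $k=1$.
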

\begin{proof}
Some of the bounds stated above are new or improve the existing bounds in \cite{hamedani2021primal}, and we discuss how they can be derived using the results in~\cite{hamedani2021primal}. When $c_{\rm nm}=0$, the update rule for $\tau_k$ implies that $\tau_0\leq\bar\tau$ and $\tau_{k+1}\leq \tau_k \sqrt{\frac{\gamma_k}{\gamma_{k+1}}}$ for $k\geq 0$; hence, we immediately get $\tau_k\leq \bar\tau\sqrt{\gamma_0/\gamma_k}$ for $k\geq 0$. Combining it with the lower bound from \cref{lem:apdb-xy-bounds}, i.e., $\tau_k\geq \hat\tau\sqrt{\gamma_0/\gamma_k}$ for $k\geq 0$, we get $\tau_k=\Theta(1/\sqrt{\gamma_k})$ for $k\geq 0$. This is true for both $\mu=0$ and $\mu>0$ cases. Next, before we analyze each case separately, we make an important observation. For each \apdbxy{} iteration $k\geq 0$, let $\tau_k^\circ$ denote the initial value that $\tau_k$ is set to before the backtracking condition in {\texttt{line}}~\ref{algeq:test} is checked for the first time for iteration $k$; therefore, the number of times the test function $E_k$ is evaluated in iteration $k$ 
is equal to $1+\log_{1/\eta}\frac{\tau_{k}^\circ}{\tau_k}$.

For the case $\mu=0$, we know that $\gamma_k=\gamma_0$ for $k\geq 0$; therefore, the result in previous paragraph implies for $k\geq 0$ that $\tau_k\in[\hat\tau,~\bar\tau]$ and $\sigma_k\in[\gamma_0\hat\tau,~\sigma_0]$; hence, $t_k=\sigma_k/\sigma_0\in[\hat\tau/\bar\tau,1]$ for $k\geq 0$ which leads to the desired bounds on $T_K$ for $K\geq 1$. Moreover, when $\mu=0$, since $\tau^\circ_k$ is set to $\tau_{k-1}$ for $k\geq 0$ with $\tau_{-1}=\bar\tau$,
during the first $K\geq 1$ iterations of \apdbxy{}, the total number of times one needs to evaluate the test function is given by 
    \begin{align*}
        \sum_{k=0}^{K-1}\Big(1+\log_{1/\eta}\Big(\frac{\tau_{k-1}}{\tau_k}\Big)\Big)=K+\log_{1/\eta}\Big(\frac{\bar\tau}{\tau_{K-1}}\Big)\leq K+\log_{1/\eta}\Big(\frac{\bar\tau}{\hat\tau}\Big),
    \end{align*}
    which follows from the fact that $\tau_{-1}=\bar\tau$ and $\tau_k\geq \hat\tau$ for all $k\geq 0$ -- we also used the fact that $\tau_{k-1}/\tau_k=1/\eta^r$ for some integer $r\geq 0$.

Next, we discuss the case $\mu>0$. The earlier paper~\cite{hamedani2021primal} established $\gamma_k=\Omega(k^2)$; but, $\gamma_k=\cO(k^2)$ bound is missing in~\cite{hamedani2021primal}. Indeed, from the proof of \cite[Lemma 3.7]{hamedani2021primal}, we know that
\begin{align}
\label{eq:gamma-diff-lower}
    \gamma_{k+1}\geq \gamma_k+\Gamma\sqrt{\gamma_k},\quad\forall~k\geq 0,
\end{align}
where $\Gamma=\mu\hat\tau\sqrt{\gamma_0}$. By induction, one can show that for any $\epsilon>0$, $\gamma_k\geq\frac{\Gamma^2}{(2+\epsilon)^2}k^2$ for all $k\geq \lceil\frac{1}{\epsilon}\rceil$; thus, 
\begin{align}
\label{eq:gamma-lower-bound}
    \gamma_k\geq\frac{\Gamma^2}{9}k^2,\quad\forall~k\geq 0.    
\end{align}
Next, we derive an upper bound on $\gamma_k$ to show that $\gamma_k=\cO(k^2)$.
For $k\geq 0$, using the update rule for $\gamma_{k+1}$ and the fact that $\tau_k\leq \bar\tau\sqrt{\gamma_0/\gamma_k}$, we get
    \begin{align}
    \label{eq:gamma-diff-upper}
        \gamma_{k+1}=\gamma_k(1+\mu\tau_k)\leq \gamma_k+\mu\bar\tau\sqrt{\gamma_0\gamma_k}=\gamma_k+\bar\Gamma\sqrt{\gamma_k},
    \end{align}
    where $\bar \Gamma\triangleq \mu\bar\tau\sqrt{\gamma_0}\geq\Gamma$. Therefore, we also get $\sqrt{\gamma_{k+1}}\leq \sqrt{\gamma_k+\bar\Gamma\sqrt{\gamma_k}}\leq \sqrt{\gamma_k}+\frac{1}{2}\bar\Gamma$ for all $k\geq 0$, which implies that $\sqrt{\gamma_k}\leq \sqrt{\gamma_0}+k\bar\Gamma/2$ for $k\geq 0$, i.e.,
    \begin{align}
    \label{eq:gamma-upper-bound}
        \gamma_{k}\leq \gamma_0 \Big(1+\mu\bar\tau k/2\Big)^2,\quad\forall~k\geq 0.
    \end{align}
Therefore, combining \eqref{eq:gamma-lower-bound} and \eqref{eq:gamma-upper-bound}, we get $\gamma_k=\Theta(k^2)$ for $k\geq 0$, which immediately implies that $\tau_k=\Theta(1/k)$ for $k\geq 1$, i.e., $\frac{2\hat\tau}{2+\mu\bar\tau k}\leq \tau_k\leq \frac{\bar\tau}{\hat\tau}\frac{3}{\mu}\frac{1}{k}$ for $k\geq 0$. Furthermore, note that $\sigma_k=\gamma_k\tau_k$ and $\gamma_{k+1}=\gamma_k(1+\mu\tau_k)$ for $k\geq 0$ imply that $\sigma_k=(\gamma_{k+1}-\gamma_k)/\mu$ for $k\geq 0$; therefore, combining \eqref{eq:gamma-diff-lower} and \eqref{eq:gamma-diff-upper}, we conclude that $\frac{\Gamma}{\mu}\sqrt{\gamma_k}\leq \sigma_k\leq\frac{\bar\Gamma}{\mu}\sqrt{\gamma_k}$, i.e., $\sigma_k=\Theta(k)$ for $k\geq 0$. With this bound on $\{\sigma_k\}$, we can conclude that $t_k=\Theta(k)$ as well; hence, $T_K=\sum_{k=0}^{K-1}=\Theta(K^2)$ for $K\geq 1$. \sa{The explicit bound $T_K \ge \frac{\Gamma^2}{12\mu\sigma_0}K^2$ for $K\ge 2$ follows from $\sigma_k\geq\frac{\Gamma}{\mu}\sqrt{\gamma_k}\geq \frac{\Gamma^2}{3\mu}k$ for $k\geq 0$, where we used \eqref{eq:gamma-lower-bound}; hence, $T_K=\sum_{k=0}^{K-1}\sigma_k/\sigma_0\geq \frac{\Gamma^2}{3\mu\sigma_0}K(K-1)/2\geq \frac{\Gamma^2}{12\mu\sigma_0}K^2$ for $K\ge 2$.} 
Now for the case $\mu>0$, using the same arguments we adopted for $\mu=0$ case, we can show that in $K\geq 1$ iterations of \apdbxy{}, the total number of times one needs to evaluate the test function is given by 
\begin{align*}
     1 + \log_{1/\eta}\left(\frac{\bar{\tau}}{\tau_0}\right) + \sum_{k=1}^{K-1}\Big(1+\log_{1/\eta}\Big(\frac{\tau_{k-1}}{\tau_k}\cdot\sqrt{\frac{\gamma_{k-1}}{\gamma_k}}\Big)\Big) =K+\log_{1/\eta}\Big(\frac{\bar\tau}{\tau_{K-1}} \sqrt{\frac{\gamma_0}{\gamma_{K-1}}} \Big) \le  K + \log_{1/\eta}\left(\frac{\bar{\tau}}{\hat{\tau}}\right), \end{align*}
where the last inequality comes from $\tau_{K-1} \ge \hat{\tau}\sqrt{\gamma_0/\gamma_{K-1}}$. This gives the desired result.
\end{proof}

\begin{remark}\label{remark: nonmonotonic-stepsizes}
\sa{It is also possible to adopt a non-monotone step-size search scheme within \Cref{alg:apdb} by setting the algorithm parameter $c_{\rm nm}=1$, which affects the $\tau$-update rule in \emph{\texttt{line}~\ref{algeq:tau}}, i.e.,}
\begin{align}
\label{eq:nonmonotone-tau}
\tau_{k+1}= 
\tau_k \sqrt{\frac{\gamma_k}{\gamma_{k+1}}\left(1+\frac{\tau_k}{\tau_{k-1}}\right)},\quad \forall~k\geq 0.
\end{align}
This choice allows for $\{\tau_k\}$ sequence to increase; hence, \emph{\apdbxy} can better exploit the local curvature, i.e., it can take larger steps in flatter regions due to smaller local Lipschitz constants, potentially improving the practical performance. 
\end{remark}
The following new result provides bounds on the number of backtracking condition evaluations for the non-monotone case, i.e., $c_{\rm nm}=1$.
\begin{lemma}[Non-monotone setting] Under the premise of \cref{lem:apdb-xy-bounds}, consider \emph{\apdbxy} in \Cref{alg:apdb} with $c_{\rm nm}=1$. For the case $\mu=0$, i.e., $f$ is merely convex, $\gamma_k=\gamma_0$ for $k\geq 0$; hence, $\tau_k\geq\hat\tau$, $\sigma_k=\gamma_0\tau_k$ and $t_k\geq \frac{\hat\tau}{\bar\tau}$ for all $k\geq 0$; therefore, $K\hat \tau/\bar\tau\leq T_K$ for all $K\geq 1$. For the case $\mu>0$, i.e., $f$ is strongly convex, we set $\varphi_p(\cdot)=\frac{1}{2}\|\cdot\|^2$. Then, for all $k\geq 1$, $\gamma_k=\Omega(k^2)$, $\tau_k=\cO(1/k)$, $\sigma_k=\Omega(k)$; hence, $t_k =\Omega(k)$.  Therefore, $T_K =\Omega(K^2)$ for $K\ge 2$; in particular, $T_K \ge \frac{\Gamma^2}{12\mu\sigma_0}K^2$ for $K\ge 2$ with $\Gamma=\mu\hat\tau\sqrt{\gamma_0}$. \sa{Furthermore, for both cases of $\mu=0$ and $\mu>0$, within $K\geq 1$ iterations of \emph{\apdbxy} with $c_{\rm nm}=1$, the total number of backtracking condition evaluations in \emph{\texttt{line}}~\ref{algeq:test} of \emph{\apdbxy} is bounded by $ (1+ \log_{1/\eta} \varphi )K + \log_{1/\eta}\left({\bar\tau}/{\hat{\tau}}\right)$, where $\varphi \triangleq (1+ \sqrt{5})/2$.}
\label{lem: apdb-sublinear-nm}
\end{lemma}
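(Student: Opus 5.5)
The plan is to reuse the proof of \cref{lem: apdb-sublinear} almost verbatim. The only ingredient lost in the non-monotone case is the upper bound $\tau_k\le\bar\tau\sqrt{\gamma_0/\gamma_k}$. This explains why the statement keeps only one-sided ($\Omega$ or $\cO$) bounds, and why the backtracking count acquires the extra term $K\log_{1/\eta}\varphi$. The lower bound $\tau_k\ge\hat\tau_k=\hat\tau\sqrt{\gamma_0/\gamma_k}$ from \cref{lem:apdb-xy-bounds} holds for any $c_{\rm nm}\in\{0,1\}$, so everything driven by it carries over.

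For $\mu=0$ we have $\gamma_k=\gamma_0$, so $\tau_k\ge\hat\tau$ and $\sigma_k=\gamma_0\tau_k\ge\gamma_0\hat\tau$. Since $\sigma_0=\gamma_0\tau_0$ with $\tau_0\le\bar\tau$ (the first step size is obtained by backtracking from $\bar\tau$), we get $t_k=\tau_k/\tau_0\ge\hat\tau/\bar\tau$, hence $T_K\ge K\hat\tau/\bar\tau$. For $\mu>0$, the recursion $\gamma_{k+1}=\gamma_k(1+\mu\tau_k)\ge\gamma_k+\Gamma\sqrt{\gamma_k}$ uses only the lower bound on $\tau_k$. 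Therefore \eqref{eq:gamma-diff-lower} and \eqref{eq:gamma-lower-bound} hold unchanged, giving $\gamma_k=\Omega(k^2)$. We then have $\sigma_k=(\gamma_{k+1}-\gamma_k)/\mu\ge\frac{\Gamma}{\mu}\sqrt{\gamma_k}\ge\frac{\Gamma^2}{3\mu}k$, so $t_k=\Omega(k)$. Summing exactly as before yields $T_K\ge\frac{\Gamma^2}{12\mu\sigma_0}K^2$ for $K\ge2$.

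For $\tau_k=\cO(1/k)$ I would use $\mu\tau_k=\gamma_{k+1}/\gamma_k-1$. The simplest route is the direct relation $\tau_k=\sigma_k/\gamma_k$ combined with an upper bound on $\sigma_k$. Failing that, I would use the recursion $\tau_{k+1}\le\tau_k\sqrt{2\gamma_k/\gamma_{k+1}}$ together with the strict growth of $\gamma_k$. I expect to handle this one-sided step by a short argument. A concrete fallback: \eqref{eq:nonmonotone-tau} gives $\tau_{k+1}^2\gamma_{k+1}=\tau_k^2\gamma_k(1+\tau_k/\tau_{k-1})$, and from this one can bound growth ratios before the division by $\gamma_k=\Omega(k^2)$.

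The main obstacle is the count of backtracking evaluations. As before, iteration $k$ costs $1+\log_{1/\eta}(\tau_k^\circ/\tau_k)$ evaluations, where now $\tau_k^\circ=\tau_{k-1}\sqrt{\frac{\gamma_{k-1}}{\gamma_k}(1+\tau_{k-1}/\tau_{k-2})}$ (with the convention $\tau_{-1}=\bar\tau$, so for $k=0$ the initial guess is $\bar\tau$). Telescoping the logs gives
\begin{align*}
K+\log_{1/\eta}\Big(\frac{\bar\tau}{\tau_{K-1}}\sqrt{\frac{\gamma_0}{\gamma_{K-1}}}\Big)+\frac12\sum_{k=1}^{K-1}\log_{1/\eta}\Big(1+\frac{\tau_{k-1}}{\tau_{k-2}}\Big),
\end{align*}
and the middle term is at most $\log_{1/\eta}(\bar\tau/\hat\tau)$ by $\tau_{K-1}\ge\hat\tau\sqrt{\gamma_0/\gamma_{K-1}}$. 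The remaining sum must be bounded by $K\log_{1/\eta}\varphi$, i.e., each factor needs $\sqrt{1+\tau_{k-1}/\tau_{k-2}}\le\varphi$, equivalently $\tau_{k-1}/\tau_{k-2}\le\varphi$, since $\varphi^2=1+\varphi$. I would prove by induction that the ratio $r_k\triangleq\tau_k/\tau_{k-1}$ satisfies $r_k\le\varphi$. Indeed $r_{k+1}\le\sqrt{\frac{\gamma_k}{\gamma_{k+1}}(1+r_k)}\le\sqrt{1+r_k}$, which maps $[0,\varphi]$ into itself, and $r_0=\tau_0/\bar\tau\le1$ starts the induction. If the index bookkeeping at $k=0,1$ causes trouble, the crude per-term bound $\log_{1/\eta}\varphi$ applied to all $K$ iterations still gives the stated $(1+\log_{1/\eta}\varphi)K+\log_{1/\eta}(\bar\tau/\hat\tau)$.
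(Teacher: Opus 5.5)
Apart from one item, your plan is the paper's proof. The $\mu=0$ bounds and the $\mu>0$ lower bounds on $\gamma_k,\sigma_k,t_k,T_K$ use only $\tau_k\ge\hat\tau\sqrt{\gamma_0/\gamma_k}$, through \eqref{eq:gamma-diff-lower}--\eqref{eq:gamma-lower-bound} and $\sigma_k=(\gamma_{k+1}-\gamma_k)/\mu$. The evaluation count is the same telescoping sum plus the induction $\tau_k/\tau_{k-1}\le\varphi$ started from $\tau_0/\bar\tau\le1$.

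The item that does not go through is $\tau_k=\cO(1/k)$ for $\mu>0$, and none of your routes can supply it. A bound $\sigma_k=\cO(k)$ would imply it, so that bound is unavailable too. Your factor $\sqrt{2}$ is wrong in the non-monotone mode, where $\tau_k/\tau_{k-1}$ may exceed $1$. The correct relation is $\tau_{k+1}^2\gamma_{k+1}\le(1+\tau_k/\tau_{k-1})\tau_k^2\gamma_k\le\varphi^2\tau_k^2\gamma_k$, with equality only for the trial value $\tau_{k+1}^\circ$, and it lets $\tau_k^2\gamma_k$ grow rather than bounding it. Only the backtracking test could cap $\tau_k^2\gamma_k$, and the test is vacuous when the iterates do not move.

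In fact the claim is false. Take $(x^0,y^0)=(x^\star,y^\star)\in\cZ^\star$.
\begin{itemize}
\item Then $s^k=\nabla_x\Phi(x^\star,y^\star)$, and the saddle-point optimality conditions give $x^{k+1}=x^\star$ and $y^{k+1}=y^\star$.
\item Hence $E_k(x^{k+1},y^{k+1})=0$ and every first trial is accepted.
\item So $r_k\triangleq\tau_k/\tau_{k-1}$ satisfies $r_0=1$ and $r_{k+1}=\sqrt{(1+r_k)/(1+\mu\tau_k)}$ exactly.
\item Suppose $\tau_k\to0$. Then for a fixed $\epsilon\in(0,1)$, eventually $r_{k+1}\ge\sqrt{(1+r_k)/(1+\epsilon)}$.
\item This increasing map has the attracting fixed point $\frac{1+\sqrt{5+4\epsilon}}{2(1+\epsilon)}>1$. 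Therefore $\liminf_k r_k>1$ and $\tau_k\to\infty$, a contradiction.
\end{itemize}
Thus $\tau_k\not\to0$. Numerically, $\tau_k\to1/\mu$, and $\gamma_k$ and $\sigma_k$ grow geometrically.

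The paper's proof has the same hole. It asserts that the $\mu>0$ bounds ``directly follow'' from the proof of \cref{lem: apdb-sublinear}. There, however, $\tau_k=\cO(1/k)$ came from $\tau_k\le\bar\tau\sqrt{\gamma_0/\gamma_k}$, which is exactly what $c_{\rm nm}=1$ destroys.

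What does hold is $\tau_k=\cO(1)$, using $r_k\le\varphi$:
\begin{itemize}
\item If $\mu\tau_k\ge\varphi$, then $1+\mu\tau_k\ge\varphi^2$, so $\tau_{k+1}\le\tau_k$.
\item Otherwise $\tau_{k+1}\le\varphi\tau_k<\varphi^2/\mu$.
\end{itemize}
By induction, $\tau_k\le\max\{\bar\tau,\varphi^2/\mu\}$.

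Nothing downstream uses the $\cO(1/k)$ claim. \Cref{thm: lr_rapdb} needs only the lower bound on $T_K$ and the evaluation count, and your argument establishes both correctly.
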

{Before we proceed with the proof, we highlight that, under the
non-monotone step-size search scheme, each iteration requires \textit{at most}
\((1+\log_{1/\eta}\varphi)\) backtracking condition evaluations on average,
which is approximately \(2\) when \(\eta=0.6\).}
\begin{proof}
Recall that \cref{lem:apdb-xy-bounds} implies that $\tau_k\geq\hat\tau\sqrt{\gamma_0/\gamma_k}$ and $\sigma_k=\gamma_k\tau_k$ for $k\geq 0$.

For the case $\mu=0$, since $\gamma_k=\gamma_0$, we get $\tau_k\geq\hat\tau$ for $k\geq 0$; moreover, we have $\tau_0\leq\bar\tau$. Thus, $t_k=\sigma_k/\sigma_0=\tau_k/\tau_0\geq \hat\tau/\bar\tau$ for $k\geq 0$ and $T_K=\sum_{k=0}^{K-1}t_k\geq K\frac{\hat\tau}{\bar\tau}$.
Recall that the number of times the test function $E_k$ is called in iteration $k$ can be explicitly stated as $1+\log_{1/\eta}\frac{\tau_{k}^\circ}{\tau_k}$. Thus, in $K\geq 1$ iterations of \apdbxy{}, the total number of times one needs to evaluate the test function is at most 
    \begin{align}
        \sum_{k=0}^{K-1}\Big(1+\log_{1/\eta}\Big(\frac{\tau_{k}^\circ}{\tau_k}\Big)\Big)&\le K+\log_{1/\eta}\left(\frac{\bar\tau}{\tau_0}\right)+\sum_{k=1}^{K-1}\log_{1/\eta}\left(\frac{\tau_{k-1}}{\tau_k}\sqrt{1+\frac{\tau_{k-1}}{\tau_{k-2}}}\right) \nonumber \\
        & = K+ \log_{1/\eta}\left(\frac{\bar\tau}{\tau_{K-1}}\right) + \frac{1}{2} \sum_{k=0}^{K-2}\log_{1/\eta}\left(1+\frac{\tau_{k}}{\tau_{k-1}}\right).\nonumber
    \end{align}
    Here, we used the following facts: since $\tau_{-1}=\bar\tau$,  $\tau_k^\circ=\bar\tau$ for $k=0$ and $\tau_k^\circ \leq \tau_{k-1}\sqrt{1+\frac{\tau_{k-1}}{\tau_{k-2}}}$ for all $k\geq 1$, which follows from \eqref{eq:nonmonotone-tau} and the fact that $\gamma_k=\gamma_0$ for $k\geq 0$ -- we also used the fact that $\tau_{k}^\circ/\tau_k=1/\eta^r$ for some integer $r\geq 0$.
    Now, let $a_k \triangleq \tau_k/\tau_{k-1}$ for $k\ge 0$; hence $a_0 = \tau_0/\bar{\tau} \le 1$. By $\tau_{k+1} \le \tau_k \sqrt{1+ \tau_k/\tau_{k-1}}$, we have $a_{k+1} \le \sqrt{1+a_k}$ for $k \ge 0$. Observe that $a_0\le 1 \le \varphi$ and if $a_k \le \varphi$ then $a_{k+1} \le \sqrt{1+a_k} \le \sqrt{1+\varphi} \le \varphi$. It follows that $\tau_k /\tau_{k-1} \le \varphi$ for all $k\ge 0$. Therefore, the total number of times one needs to evaluate the test function is at most
    \begin{align*}
       K + \log_{1/\eta}\left(\frac{\bar\tau}{\tau_{K-1}}\right) + \frac{1}{2}(K-1) \log_{1/\eta}\left(1+\varphi\right) & = K + \log_{1/\eta}\left(\frac{\bar\tau}{\tau_{K-1}}\right) + (K-1) \log_{1/\eta} \varphi\\
       & \le (1+ \log_{1/\eta} \varphi )K + \log_{1/\eta}\left({\bar\tau}/{\hat{\tau}}\right).
    \end{align*}
For the case $\mu>0$, the following bounds, $\gamma_k=\Omega(k^2)$, $\tau_k=\cO(1/k)$, $\sigma_k=\Omega(k)$, $t_k =\Omega(k)$, and $T_K =\Omega(K^2)$, directly follows from the arguments in the proof of \cref{lem: apdb-sublinear}. Moreover, within $K\geq 1$ iterations of \apdbxy{}, the total number of times one needs to evaluate the test function is at most 
    \begin{align}
        \sum_{k=0}^{K-1}\Big(1+\log_{1/\eta}\Big(\frac{\tau_{k}^\circ}{\tau_k}\Big)\Big)&\le K+\log_{1/\eta}\left(\frac{\bar\tau}{\tau_0}\right)+\sum_{k=1}^{K-1}\log_{1/\eta}\left(\frac{\tau_{k-1}}{\tau_k}\sqrt{ \frac{\gamma_{k-1}}{\gamma_k} \left( 1+\frac{\tau_{k-1}}{\tau_{k-2}}\right) }\right)\nonumber \\
        & = K+ \log_{1/\eta}\left(\frac{\bar\tau \sqrt{\gamma_0}}{\tau_{K-1} \sqrt{\gamma_{K-1}}}\right) + \frac{1}{2} \sum_{k=1}^{K-1}\log_{1/\eta}\left(1+\frac{\tau_{k-1}}{\tau_{k-2}}\right). \label{eq:nonmono-backtracking-bound}  \end{align}
    Note that $\frac{\bar\tau \sqrt{\gamma_0}}{\tau_{K-1} \sqrt{\gamma_{K-1}}} \le \frac{\bar \tau}{\hat \tau}$. On the other hand, observe that
    \begin{align*}
        \frac{\tau_{k+1}}{\tau_k} \le \sqrt{\frac{\gamma_k}{\gamma_{k+1}}\left(1+\frac{\tau_k}{\tau_{k-1}}\right)} \le \sqrt{1+\frac{\tau_k}{\tau_{k-1}}} \quad \forall k \ge 0,
    \end{align*}
    and thus $\tau_1 / \tau_0 \le \sqrt{1+ \tau_0/\bar{\tau}} \le \sqrt{2} \le \varphi$. By induction, we know $\tau_{k+1}/ \tau_k \le \varphi$ for all $k\ge 0$. Therefore, the bound in \eqref{eq:nonmono-backtracking-bound} is at most $K + \log_{1/\eta}(\bar \tau / \hat{\tau}) + K \log_{1/\eta}\varphi$.    
\end{proof}



\subsection{\texttt{APDB-yx}: $y$ update first}
\sa{\apdbxy{} method uses the momentum term only in the $x$-update; therefore, it is natural to ask how the behavior is affected if we switch the order of $x$ and $y$ updates, and adopt momentum term in the $y$-update.} In fact, we have found that updating $y$ first can improve the algorithmic performance in practice; however, to establish the associated convergence guarantee, one theoretical obstacle is that the dual iterate \sa{sequence $\{y^k\}_{k\ge 0}$ may be unbounded --unlike \apdbxy. To resolve this issue, we assume that a dual bound is known. Such a bound is attainable whenever Slater's condition holds.} 
\begin{assumption}
There exists 
$\tilde{x} \in \rm relint(\mathcal{X})$
such that $A \tilde x = b$, and $-g(\tilde x) \in {\rm int}(\mathcal{K})$.
\label{assu: slater}
\end{assumption}
Under Slater's condition, we know that strong duality holds and the dual optimal value is attained \cite[Section 5.9]{boyd2004convex}, i.e., a dual optimal solution $ y^\star \in \cY $ exists\footnote{For \eqref{eq:conic-problem}, there is a refined (weaker) Slater's condition to ensure strong duality and existence of a dual optimal solution $y^\star$. This weaker version holds if there exists $\tilde{x} \in {\rm relint}(\mathcal{X})$, the relative interior of the set $\mathcal{X}$, such that $A \tilde x = b$, $g_i(\tilde{x}) \le 0$ for all $i\in [m]$ such that $g_i$ is affine, and $g_i(\tilde x)<0$ for the remaining indices $i\in [m]$, see, e.g., \cite[Section 5.2.3]{boyd2004convex}. However, to obtain a computable bound on $\| y^\star\|$, one can adopt the stronger version stated in \Cref{assu: slater}}. 
\sa{In the following result, given a Slater point $\tilde x$, we discuss how a dual bound $\bar B>0$ can be computed such that $\norm{y^\star}\leq \bar B$ for some $y^\star\in\cY^\star$.
Indeed, Lemma~\ref{lem:dual_sol_bounded} is an extension of \cite[Lemma 1.1]{nedic201010}.}
\begin{lemma}\label{lem:dual_sol_bounded}
\sa{Suppose that \cref{as:sets,as:Lipschitz-grads} hold.} Consider the problem in \eqref{eq:conic-problem}: $f^\star=\min_{x\in \mathcal{X}}\{f(x): \;  Ax=b,\; g(x)\in -\mathcal{K}\}$. The dual function \sa{$q:\reals^p\times\reals^m\to\reals\cup\{-\infty\}$} is defined as
\begin{align}
    \label{eq:dual-function}
\sa{q}(v,\lambda) \triangleq
\begin{cases}
\inf\limits_{x\in \mathcal{X}}
\big\{
f(x)+\langle v,Ax-b\rangle + \langle \lambda,g(x)\rangle
\big\},
& \textnormal{if }\lambda\in \mathcal{K}^*,\\[0.5em]
-\infty, & \textnormal{otherwise},
\end{cases}
\end{align}
\sa{where $\dom q\triangleq \{(v,\lambda) \in \cY:\ {q}(v,\lambda) > - \infty\}$.} 
If \cref{assu: slater} holds, then
for any dual optimal solution $(v^\star,\lambda^\star)$, it holds that
\begin{equation}\label{eq:app-lambdabound}
\|\lambda^\star\|
\le \frac{f(\tilde x)-{q}(v,\lambda)}{r^*},\quad \forall~(v,\lambda) \in \dom q,
\end{equation}
where
\[
0 < r^* \triangleq
\min\Big\{
-\langle w,g(\tilde x)\rangle:\ \|w\|=1,\ w\in \mathcal{K}^*
\Big\}.
\]
\sa{Moreover, if we further assume that $\mathcal{X}^\star \cap {\rm int}(\mathcal{X})\neq\emptyset$},
then
\[
\|v^\star\|
\le
\frac{1}{\sigma_{\min}(A)}
\left(
\max_{x\in\mathcal X}\|\nabla f(x)\|
+
C_g\,\frac{f(\tilde x)-{q}(v,\lambda)}{r^*}
\right),\quad \forall~(v,\lambda) \in \dom q;
\]
hence, 
$\cY^\star$ is bounded, \sa{i.e., there exists $\bar B>0$ such that $y^\star\in\cY^\star$ implies that $\norm{y^\star}\leq \bar B$.}
\end{lemma}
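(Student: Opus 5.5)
The plan is to extend the classical Slater-point argument of \cite[Lemma 1.1]{nedic201010} to conic constraints, and then to recover the bound on $v^\star$ from the primal stationarity condition.

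\emph{Positivity of $r^*$.} The set $\{w\in\cK^*:\ \norm{w}=1\}$ is compact and nonempty, since $\cK$ is pointed and therefore $\cK^*$ has nonempty interior. The map $w\mapsto -\fprod{w,g(\tilde x)}$ is continuous, so the minimum is attained. Since $-g(\tilde x)\in{\rm int}(\cK)$, every nonzero $w\in\cK^*$ satisfies $\fprod{w,-g(\tilde x)}>0$. Indeed, if it were $0$, we could perturb $-g(\tilde x)$ inside $\cK$ in the direction $-w$ and get a negative inner product, a contradiction. Hence $r^*>0$.

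\emph{Bound on $\lambda^\star$.} Take a dual optimal $(v^\star,\lambda^\star)$. Because $A\tilde x=b$ and $\tilde x\in\cX$, the definition of $q$ gives
\[
q(v^\star,\lambda^\star)\le f(\tilde x)+\fprod{\lambda^\star,g(\tilde x)}.
\]
When $\lambda^\star\neq 0$, the definition of $r^*$ applied to $w=\lambda^\star/\norm{\lambda^\star}$ gives $\fprod{\lambda^\star,g(\tilde x)}\le -r^*\norm{\lambda^\star}$. Combining,
\[
r^*\norm{\lambda^\star}\le f(\tilde x)-q(v^\star,\lambda^\star).
\]
Dual optimality gives $q(v^\star,\lambda^\star)\ge q(v,\lambda)$ for every $(v,\lambda)\in\dom q$, which yields \eqref{eq:app-lambdabound}. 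The case $\lambda^\star=0$ is trivial, because weak duality gives $f(\tilde x)\ge f^\star\ge q(v,\lambda)$, so the right-hand side is nonnegative.

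\emph{Bound on $v^\star$.} This is the step needing the extra assumption. Pick $x^\star\in\cX^\star\cap{\rm int}(\cX)$. Slater's condition gives strong duality, so $(x^\star,y^\star)$ is a saddle point by \cref{lem:product-saddle-set}. Then $x^\star$ minimizes the convex function $\Phi(\cdot,y^\star)$ over $\cX$. Since $x^\star$ is interior, the normal cone is $\{0\}$, and we obtain
\[
\grad f(x^\star)+A^\top v^\star+\grad g(x^\star)^\top\lambda^\star=0.
\]
Because $A$ has full row rank $p$, $\norm{A^\top v^\star}\ge\sigma_{\min}(A)\norm{v^\star}$. Hence
\[
\sigma_{\min}(A)\norm{v^\star}\le\norm{\grad f(x^\star)}+\norm{\grad g(x^\star)}\,\norm{\lambda^\star}.
\]
We bound $\norm{\grad f(x^\star)}$ by $\max_{x\in\cX}\norm{\grad f(x)}$, which is finite by continuity and compactness. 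For the term $\norm{\grad g(x^\star)}$ we use $C_g$ rather than $B_g$. This is legitimate because $x^\star$ is interior: for any unit $d$ and small $t$, $\norm{g(x^\star+td)-g(x^\star)}\le C_g t$, so the directional derivative satisfies $\norm{\grad g(x^\star)d}\le C_g$. Thus $\norm{\grad g(x^\star)}\le C_g$. Plugging in the $\lambda^\star$ bound gives the stated estimate.

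\emph{Boundedness of $\cY^\star$.} Fix any single $(v,\lambda)\in\dom q$; for instance $y^\star$ itself lies in $\dom q$ since $q(y^\star)=f^\star>-\infty$. Then both bounds are uniform over $\cY^\star$, so $\cY^\star$ is bounded and some $\bar B$ exists.

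I expect the only delicate points to be the interiority arguments: the $r^*>0$ step, and justifying $\norm{\grad g(x^\star)}\le C_g$ through interiority of $x^\star$.
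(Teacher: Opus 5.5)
Your proof is correct, and its overall structure matches the paper's: a Slater-point inequality bounds $\lambda^\star$, and stationarity at an interior primal solution together with $\sigma_{\min}(A)$ bounds $v^\star$. The difference lies in how $r^*$ enters the $\lambda^\star$ bound.

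The paper uses $-g(\tilde x)\in{\rm int}(\cK)$ to get some radius $r>0$ with $\bar\cB_r(-g(\tilde x))\subset\cK$. It tests the dual-cone inequality against the point $-g(\tilde x)-r\lambda^\star/\norm{\lambda^\star}\in\cK$, which gives $r\norm{\lambda^\star}\le f(\tilde x)-q(v,\lambda)$. It then appeals to \cite[Lemma 6.1]{aybat2019distributed} to identify the largest such inscribed radius with $r^*$.

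You instead substitute $w=\lambda^\star/\norm{\lambda^\star}$ directly into the variational definition of $r^*$, and prove $r^*>0$ with your own compactness-plus-interior argument. Your route is shorter and self-contained. The paper's route keeps the geometric reading of $r^*$ as the radius of the largest ball around $-g(\tilde x)$ inside $\cK$, at the cost of relying on an external lemma.

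You are also more careful than the paper at two points it passes over. First, you check that the right-hand side of \eqref{eq:app-lambdabound} is nonnegative when $\lambda^\star=0$, using weak duality with the feasible point $\tilde x$; the paper just calls this case trivial. Second, you justify $\norm{\grad g(x^\star)}\le C_g$ by directional difference quotients, which is valid because $x^\star$ is interior. The paper attributes this bound directly to the $C_g$-Lipschitz continuity of $g$ on $\cX$ without further comment.
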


\begin{proof}
\sa{Given arbitrary $(v^\star,\lambda^\star)\in\cY^\star$, it holds for any $(v,\lambda)\in \dom q$ that}
\begin{align*}
{q}(v,\lambda)
\le
{q}(v^\star,\lambda^\star)
& =
\inf_{x\in\mathcal X}
\big\{
f(x)+\langle v^\star,Ax-b\rangle+\langle \lambda^\star,g(x)\rangle
\big\}\\
& \le f(\tilde{x})+\langle v^\star,A\tilde{x}-b\rangle+\langle \lambda^\star,g(\tilde{x})\rangle;
\end{align*}
hence, it follows that $-\langle \lambda^\star,g(\tilde x)\rangle
\le
f(\tilde x)-{q}(v,\lambda)$. Since $-g(\tilde x)\in \operatorname{int}(\mathcal K)$, there exists $r>0$ such that $-g(\tilde x)-ru\in \mathcal K$ for all $u \in \mathbb{R}^m$ with $\|u\| \le 1$. Note that if $\lambda^\star = 0$, the bound in \eqref{eq:app-lambdabound} holds trivially; therefore, it is sufficient to consider $\lambda^\star \neq 0$ only. For $u=\lambda^\star/\|\lambda^\star\|$, using $\lambda^\star\in\mathcal K^*$, we have
\[
0
\le
\left\langle \lambda^\star,-g(\tilde x)-r{\lambda^\star}/{\|\lambda^\star\|}\right\rangle
=
-\langle \lambda^\star,g(\tilde x)\rangle-r\|\lambda^\star\|;
\]
thus, we get
\[
r\|\lambda^\star\|
\le
-\langle \lambda^\star,g(\tilde x)\rangle
\le
f(\tilde x)-{q}(v,\lambda),
\]
which implies that $\|\lambda^\star\|
\le ({f(\tilde x)-{q}(v,\lambda)})/{r}$.
According to \cite[Lemma 6.1]{aybat2019distributed}, the largest radius $r^*>0$ such that \sa{the closed ball $\bar{\cB}_{r^*}(-g(\tilde x))\subset\cK$} satisfies $r^* = \min\{
-\langle w,g(\tilde x)\rangle:\ \|w\|=1,\ w\in \mathcal K^*
\}$; therefore, $\|\lambda^\star\|
\le ({f(\tilde x)-{q}(v,\lambda)})/{r^*}$. 

Suppose there exists a primal optimal solution $x^\star$ such that $x^\star \in {\rm int}(\mathcal{X})$. 
Let $y^\star=(v^\star,\lambda^\star)$ be an arbitrary dual optimal solution. Slater's condition holding implies that strong duality holds, which implies that $(x^\star,y^\star)$ is a KKT point; hence, $0=\nabla f(x^\star)+A^\top v^\star+ \sa{\nabla g(x^\star)^\top \lambda^\star}$.
\sa{Therefore, using $C_g$-Lipschitz continuity of $g$ over $\cX$, we get}
\[
\|A^\top v^\star\|
\le
\|\nabla f(x^\star)\|+\|\nabla g(x^\star)\|\,\|\lambda^\star\|
\le
\max_{x\in\mathcal X}\|\nabla f(x)\|+C_g\|\lambda^\star\|.
\] 
Hence, $A$ has full row rank implies that $\sigma_{\rm min}(A)>0$, and we get
\[
\|v^\star\|
\le
\frac{1}{\sigma_{\min}(A)}\|A^\top v^\star\|
\le
\frac{1}{\sigma_{\min}(A)}
\left(
\max_{x\in\mathcal X}\|\nabla f(x)\|+C_g\|\lambda^\star\|
\right).
\]
\sa{Using the bound on $\|\lambda^\star\|$ yields}
\[
\|v^\star\|
\le
\frac{1}{\sigma_{\min}(A)}
\left(
\max_{x\in\mathcal X}\|\nabla f(x)\|
+
C_g\,\frac{f(\tilde x)-{q}(v,\lambda)}{{r^*}}
\right).
\]
Combining the bounds on $\|v^\star\|$ and $\|\lambda^\star\|$ yields the desired bound on $\|(v^\star,\lambda^\star)\|$.
\end{proof}
\begin{remark}
\label{rem:dual-bound}
In Lemma \ref{lem:dual_sol_bounded}, the bounds on both $\norm{\lambda^\star}$ and $\norm{v^\star}$ depend on $r^* = 
\min_w\Big\{
\langle w,-g(\tilde x)\rangle:\ \|w\|=1,\ w\in \mathcal{K}^*
\Big\}$ with $-g(\tilde{x}) \in {\rm int}(\mathcal{K})$. Although computing $r^*$ requires solving a nonconvex optimization problem, its optimal value admits closed-form expressions for several important cases:
\begin{itemize}
    \item if $\mathcal{K}=\mathbb{R}^m_+$, then $r^*=\min_{i\in[m]}\{-g_i(\tilde x)\}$;
    \item if $\mathcal{K}=\mathbb{S}^m_+$, then $r^*=\lambda_{\min}\big(-g(\tilde x)\big)$,
    \jw{where $\|\cdot\|$ 
    is the Frobenius norm for this case};
    \item if $\mathcal{K}=\mathcal{Q}^{m}
    \triangleq \{(t,u)\in\mathbb{R}\times\mathbb{R}^{m-1}:\ t\ge \|u\|\}$,
    \sa{given $g(\tilde x)=[g_i(\tilde x)]_{i\in[m]}\in -{\rm int}(\cK)$, let $\bar g(\tilde x)\triangleq [g_i(\tilde x)]_{i=2}^m\in\reals^{m-1}$, then $r^*=-(g_1(\tilde x)+\|\bar g(\tilde x)\|)/\sqrt{2}$.}
\end{itemize}
\end{remark}

\begin{assumption}
There exists \sa{$\bar B>0$} such that ${\cY^\star}\cap\{y:\ \norm{y}\leq \sa{\bar B}\}\neq\emptyset$.
\label{as:dual-bound}
\end{assumption}
\begin{definition} \label{def:calB}
\sa{Under \cref{as:dual-bound}, given some arbitrary 
$B>\bar B$, let} 
\begin{align} \label{eq: defcalB}
\mathcal{B} \triangleq \{ y \in \mathbb{R}^{p} \times \mathbb{R}^m: \| y\| \le B \}.
\end{align}
\end{definition}
\begin{remark}
    \sa{For the sake of notational simplicity throughout the paper, we defined $\cB$ as in \eqref{eq: defcalB}. Algorithmically this is not preferable as $\norm{y}=(\norm{v}^2+\norm{\lambda}^2)^{1/2}$ couples $v$- and $\lambda$-updates. Instead, one can alternatively set $\mathcal{B} =\{ (v,\lambda) \in \mathbb{R}^{p} \times \mathbb{R}^m: \| v\| \le B_v,\ \norm{\lambda}\leq B_\lambda \}$ for some $B_v,B_\lambda>0$ such that $\cY^\star\subset{\rm int}(\cB)$. With this choice of $\cB$, $y$-update becomes separable in $v$ and $\lambda$ in case $\mathbf{D}_d$ is separable.}  
\end{remark}
\sa{Assumption~\ref{assu: slater} and the choice of $B$ based on Lemma~\ref{lem:dual_sol_bounded} and 
Remark~\ref{rem:dual-bound} provide a sufficient condition for Assumption~\ref{as:dual-bound} --indeed, under \Cref{assu: slater}, one can set $\cB$ such that ${\cY^\star}\subset {\rm int}(\cB)$ holds. Indeed, \Cref{as:dual-bound} is weaker than \Cref{assu: slater} as it only implies that ${\cY^\star}\cap {\rm int}(\cB)\neq\emptyset$.} 

\sa{We now consider a slightly modified version of \eqref{eq:minimax-pro} obtained by further restricting 
the dual domain in \eqref{eq:minimax-pro}} to $\cY \cap \mathcal{B}$,~see \cite{aybat2019distributed,hamedani2021primal}, i.e.,
\begin{align} \label{eq:minmax_rest} \tag{R-min-max}
\min_{x \in \mathbb{R}^n } \max_{y \in 
\sa{\cB}} {\mathcal{L}}(x,y),
\end{align}
where $\cL$ is defined in \eqref{eq:minimax-pro}. Thus, \eqref{eq:minmax_rest} is equivalent to $\min_x\max_y \iota_{\mathcal{X}}(x) + 
\Phi(x,y) - \iota_{\hat{\mathcal{Y}}}(y)$ where $\hat{\mathcal{Y}} \triangleq \cY \cap \mathcal{B}$ \sa{denotes the compact 
dual domain and $\Phi$ is defined in~\eqref{eq:ncc-coupling}.} 
\begin{definition}\label{def:hatzstar}
    \sa{Let the compact set $\hat{\mathcal{Z}} \triangleq \mathcal{X} \times \hat{\mathcal{Y}}$ denote the domain of \eqref{eq:minmax_rest} with $\hat{\mathcal{Y}} \triangleq \cY \cap \mathcal{B}$, and ${ \hat{\mathcal{Z}}^\star}$ denote the set of 
saddle points of \eqref{eq:minmax_rest}.} 
\end{definition}
\sa{Now that the domain $\hat\cZ$ is compact, we can consider another variant of \apdbxy. We call it \apdbyx{} as $x$- and $y$-updates are switched compared to \apdbxy. Indeed, suppose that the update rules for $\alpha_{k+1}, \beta_{k+1}$ in \texttt{line} \ref{algeq:alphabeta}, and for $y^{k+1}, x^{k+1}$ in \texttt{line}~\ref{algeq:s-update}-\ref{algeq:y-update} in \Cref{alg:apdb} are replaced by}
\begin{align}
\sigma_k &\gets \gamma_k\tau_k,\quad \theta_k \gets \sigma_{k-1}/\sigma_k,\quad \alpha_{k+1}\gets c_\alpha/\sigma_k,\quad \beta_{k+1}\gets 0,\nonumber\\
s^k &\gets (1+\theta_k)\nabla_y \Phi(x^k,y^k)
- \theta_k \nabla_y \Phi(x^{k-1},y^{k-1}),\nonumber \\
y^{k+1} &\gets 
\mathop{\arg\min}_{y\in \hat{\cY}}
\left\{
- \langle s^k,y\rangle
+ \frac{1}{\sigma_k} \textbf{D}_d(y,y^k)
\right\}, \label{eq: apdb-yx-y} \\
x^{k+1} &\gets 
\mathop{\arg\min}_{x\in\mathcal X}
\left\{ \langle \nabla_x \Phi(x^k, y^{k+1}),x\rangle
+ \frac{1}{\tau_k} \textbf{D}_p(x,x^k)
\right\}; \nonumber 
\end{align}
in addition,  
\sa{the test function $E_k$ defined in \eqref{eq: backtracking} for \apdbxy{} is replaced by}
\begin{align*}
E_k(x,y)\triangleq\;&
\Phi(x,y)-\Phi(x^k,y)-\left\langle \nabla_x \Phi(x^k,y),\,x-x^k\right\rangle-\frac{1}{\tau_k}\textbf{D}_{p}(x,x^k)\\
&+\frac{1}{2\alpha_{k+1}}\left\|\nabla_y\Phi(x,y)-\nabla_y\Phi(x^k,y)\right\|^2-\left(\frac{1}{\sigma_k}-\theta_k\alpha_k\right)\textbf{D}_{d}(y,y^k).
\end{align*} 
\begin{definition}
\label{def:hatLxxLyx}
Consider $\Phi:\cX\times\cY\to\reals$ defined in \eqref{eq:ncc-coupling}, let $\hat{L}_{xx}$ and $\hat{L}_{yx}$ denote the Lipschitz constants of $\nabla_x \Phi(\cdot,y)$ and $\nabla_y \Phi(\cdot,y)$, respectively, over the set $\mathcal{X}$ uniformly for all $y\in\hat{\cY}$, i.e.,
\begin{align*}
    \norm{\grad_x\Phi(x',y)-\grad_x\Phi(x,y)}\leq \hat{L}_{xx}\norm{x'-x},\quad\forall~x',x\in\cX,\ y\in\hat\cY,\\
    \norm{\grad_y\Phi(x',y)-\grad_y\Phi(x,y)}\leq \hat{L}_{yx}\norm{x'-x},\quad\forall~x',x\in\cX,\ y\in\hat\cY.
\end{align*}
\end{definition}
\begin{remark}
    It follows from Assumption~\ref{as:Lipschitz-grads} and the definition of $\hat\cY$ that $\hat{L}_{xx}$ and $\hat{L}_{yx}$ in Definition~\ref{def:hatLxxLyx} satisfy $\hat{L}_{xx} = L_f+B\cdot L_g$ and $\hat{L}_{yx}=\norm{A}+C_g$. Specifically, for \eqref{eq: qcqp-pro}, the constants $\hat{L}_{xx}$ and $\hat{L}_{yx}$ have the following form:
\begin{align}
\hat{L}_{xx}= \| Q_0\| + B \cdot \left(\sum_{i=1}^m \| Q_i \|^2\right)^{1/2}, \quad \hat{L}_{yx} = \| A \| + C_g.
\end{align}
\end{remark}
Below we provide convergence results of \texttt{APDB-yx} for \eqref{eq:minmax_rest} established in~\cite{hamedani2021primal}.
\begin{lemma}\label{thm:mainII}
\sa{Suppose \cref{as:sets,as:Lipschitz-grads,as:saddle-point,as:dual-bound} hold.} 
Consider \eqref{eq:minmax_rest}. For \emph{\texttt{APDB-yx}} {with any $c_{\rm nm} \in \{0,1\}$}, initialized from an arbitrary point $(x^0,y^0)\in \mathcal{X}\times \hat{\cY}$, let $\delta\in[0,1), c_\alpha>0$, $c_\beta = 0$, and $c_\alpha + \delta \in (0,1]$, and define
\begin{equation}\label{eq:psi12}
\Psi_2 \triangleq \frac{c_\alpha}{2\gamma_0}\cdot\frac{\hat{L}_{xx}}{\hat{L}_{yx}^2}\,\zeta,
\qquad
\zeta \triangleq -1+\sqrt{1+\frac{4(1-\delta)\gamma_0}{c_\alpha }\cdot\frac{ \hat{L}_{yx}^2}{\hat{L}_{xx}^2}}.
\end{equation}
Then, $\tau_k\geq \hat\tau_k\triangleq \hat\tau\sqrt{\gamma_0/\gamma_k}$, where $\hat\tau=\eta\Psi_2$, and $\sigma_k=\gamma_k\tau_k$ for all $k\geq 0$; moreover, for any $K\ge 1$ and $(x,y)\in\cX\times \hat{\cY}$, it holds that
\begin{align}
\sa{\frac{\gamma_K}{\sigma_0}}
\textbf{D}_{p}(x,x^K)+
\frac{1-c_\alpha}{\sigma_0}
\textbf{D}_{d}(y,y^K)+
T_K\bigl({\cL}(\bar x^K,y)-{\cL}(x,\bar y^K)\bigr)\le\Delta(x,y),
\label{eq: 3.3apd}
\end{align}
where \sa{$\Delta(x,y)$, $T_K$, and $(\bar x^K,\bar y^K)$ are defined as in \Cref{lem:apdbxy-1} and \cref{lem:apdb-xy-bounds}.}
\end{lemma}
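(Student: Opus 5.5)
The plan is to treat \apdbyx{} as an instance of the general APD/APDB framework of \cite{hamedani2021primal} in which the roles of $x$ and $y$ are exchanged. The primal player becomes $-\Phi(\cdot,y)$ maximized over $\hat\cY$, and the momentum acts on $\grad_y\Phi$. Since $\hat\cZ=\cX\times\hat\cY$ is compact, all Lipschitz constants in Definition~\ref{def:hatLxxLyx} are valid along the whole trajectory, so no a priori boundedness argument like \cref{lem:apdbxy-1} is needed. The proof then splits into two parts: a lower bound on the accepted step sizes, and the one-step energy inequality, which we telescope.

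\textbf{Step 1: the step-size lower bound.} We show that the backtracking test $E_k(x^{k+1},y^{k+1})\le -\frac{\delta}{\tau_k}\mathbf{D}_p(x^{k+1},x^k)-\frac{\delta}{\sigma_k}\mathbf{D}_d(y^{k+1},y^k)$ holds automatically whenever $\tau_k\le\Psi_2\sqrt{\gamma_0/\gamma_k}$. Using $\hat L_{xx}$-smoothness of $\Phi(\cdot,y)$, the first line of $E_k$ is bounded by $\big(\frac{\hat L_{xx}}{2}-\frac{1}{\tau_k}\mathbf{D}_p/\|\cdot\|^2\big)\|x-x^k\|^2$. The gradient-difference term is at most $\frac{\hat L_{yx}^2\sigma_k}{2c_\alpha}\|x-x^k\|^2$. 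We also need $\theta_k\alpha_k=\frac{\sigma_{k-1}}{\sigma_k}\cdot\frac{c_\alpha}{\sigma_{k-1}}=\frac{c_\alpha}{\sigma_k}$, and we use $\mathbf{D}\ge\frac12\|\cdot\|^2$.

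Under these bounds the $y$-part is nonpositive with margin $\delta$, provided $c_\alpha+\delta\le1$. The $x$-part reduces to requiring
\[
\hat L_{xx}\tau_k+\frac{\hat L_{yx}^2\gamma_k}{c_\alpha}\tau_k^2\le 1-\delta .
\]
Solving this quadratic in $\tau_k\sqrt{\gamma_k/\gamma_0}$ (using $\gamma_k\ge\gamma_0$) gives exactly the positive root $\Psi_2$ in \eqref{eq:psi12}. Since backtracking shrinks $\tau_k$ by the factor $\eta$, the accepted step satisfies $\tau_k\ge\eta\Psi_2\sqrt{\gamma_0/\gamma_k}$. We must also check that the candidate initial value $\tau_k^\circ$ from line~\ref{algeq:tau} is compatible with the induction in both cases $c_{\rm nm}\in\{0,1\}$. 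This holds because the lower bound only concerns the value reached after backtracking.

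\textbf{Step 2: the one-step inequality.} Write the three-point (Bregman prox) inequalities for the $y$-update \eqref{eq: apdb-yx-y} and for the $x$-update. Use concavity of $\Phi(x,\cdot)$ (it is affine in $y$) together with convexity of $\Phi(\cdot,y)$ with modulus $\mu$. Split the momentum inner product $\theta_k\langle q^k,y^{k+1}-y\rangle$, where $q^k=\grad_y\Phi(x^k,y^k)-\grad_y\Phi(x^{k-1},y^{k-1})$, by Young's inequality with parameter $\alpha_k$. This yields the term $\frac{1}{2\alpha_{k+1}}\|\grad_y\Phi(x^{k+1},y^{k+1})-\grad_y\Phi(x^k,y^{k+1})\|^2$ that appears in $E_k$. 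The result is
\begin{align*}
\mathcal L(x^{k+1},y)-\mathcal L(x,y^{k+1})\le{}& \Big[\tfrac1{\tau_k}\mathbf{D}_p(x,x^k)-\big(\tfrac1{\tau_k}+\mu\big)\mathbf{D}_p(x,x^{k+1})\Big]\\
&+\Big[\tfrac1{\sigma_k}\mathbf{D}_d(y,y^k)-\tfrac1{\sigma_k}\mathbf{D}_d(y,y^{k+1})\Big]\\
&+\big[\text{telescoping momentum terms}\big]+E_k(x^{k+1},y^{k+1}).
\end{align*}
The last term is nonpositive by the test. Multiply by $t_k=\sigma_k/\sigma_0$ and use $\sigma_k=\gamma_k\tau_k$, $\gamma_{k+1}=\gamma_k(1+\mu\tau_k)$ and $t_k\theta_k=t_{k-1}$. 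Then $\frac{t_k}{\tau_k}(1+\mu\tau_k)=\frac{\gamma_{k+1}}{\sigma_0}\ge\frac{t_{k+1}}{\tau_{k+1}}$, where the inequality uses $\tau_{k+1}\le\tau_k\sqrt{\gamma_k/\gamma_{k+1}}$ in the monotone case. For $c_{\rm nm}=1$ we need $\gamma_{k+1}/\sigma_0$ to dominate $\gamma_{k+1}/\sigma_0$, which is an equality, so that case also telescopes. The dual weights are $t_k/\sigma_k=1/\sigma_0$, so they telescope as well.

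Sum over $k=0,\dots,K-1$. At $k=K-1$ the leftover momentum inner product is absorbed by Young's inequality into $\frac{c_\alpha}{\sigma_0}\mathbf{D}_d(y,y^K)$, which leaves the coefficient $1-c_\alpha$. Finally, Jensen's inequality (convexity in $x$ and concavity in $y$ of $\mathcal L$) gives $T_K(\mathcal L(\bar x^K,y)-\mathcal L(x,\bar y^K))$, and this yields \eqref{eq: 3.3apd} with $\Delta$ on the right-hand side.

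\textbf{Main obstacle.} The main difficulty will be the bookkeeping in Step~2 under the non-monotone rule $c_{\rm nm}=1$. There $\tau_{k+1}$ may exceed $\tau_k\sqrt{\gamma_k/\gamma_{k+1}}$, so telescoping the primal Bregman terms requires checking $\frac{t_k}{\tau_k}(1+\mu\tau_k)\ge\frac{t_{k+1}}{\tau_{k+1}}$. This inequality in fact holds as the identity $\gamma_{k+1}/\sigma_0$ on both sides, since $t_{k+1}/\tau_{k+1}=\gamma_{k+1}/\sigma_0$. I would verify this first. I would then follow \cite[Theorem 2.2, Remark 3.8]{hamedani2021primal}, applied with the variable roles swapped, for the remaining terms.
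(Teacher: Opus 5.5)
For $\mu=0$ your plan is sound, and it is exactly the argument the paper outsources to \cite[Theorem 2.7]{hamedani2021primal}. The sufficient condition $\hat L_{xx}\tau_k+\hat L_{yx}^2\gamma_k\tau_k^2/c_\alpha\le 1-\delta$ does have positive root $\Psi_2$ in $\tau_k\sqrt{\gamma_k/\gamma_0}$. Your observation that the $\tau$-update never enters the energy estimate is what lets $c_{\rm nm}=1$ go through verbatim. No role swap is needed: \apdbyx{} is H\&A's original ordering (momentum on $\nabla_y\Phi$, dual step first), and it is \apdbxy{} that is the swapped variant.

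\textbf{The gap: the case $\mu>0$.} Your one-step inequality puts the strong-convexity term at the new iterate, $-(\frac{1}{\tau_k}+\mu)\mathbf{D}_p(x,x^{k+1})$. In \apdbyx{} as defined here, the only non-linearized term in the $x$-step is $\iota_{\cX}$. The function $f$ sits inside $\Phi$ and is linearized at $x^k$, so the three-point inequality for the $x$-step contains no $\mu$. The modulus must therefore come from a lower bound on $\Phi(x,y^{k+1})$. To combine that bound with the descent-type part of $E_k$, which compares $\Phi(x^{k+1},y^{k+1})$ with its linearization at $x^k$, you must linearize at the same point $x^k$. That yields $-\frac{\mu}{2}\|x-x^k\|^2$, not a term at $x^{k+1}$. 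Linearizing at $x^{k+1}$ instead leaves the cross term $\langle\nabla_x\Phi(x^{k+1},y^{k+1})-\nabla_x\Phi(x^k,y^{k+1}),x^{k+1}-x\rangle$ with arbitrary $x$, which nothing in $E_k$ controls.

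With the term at $x^k$ (Euclidean $\varphi_p$), the weighted primal sum telescopes only if $\gamma_{k+1}(1-\mu\tau_{k+1})\le\gamma_k$, i.e., $\tau_{k+1}\ge\tau_k/(1+\mu\tau_k)$. Backtracking does not guarantee this. Moreover, the coefficient left on $\mathbf{D}_p(x,x^K)$ is $\gamma_{K-1}/\sigma_0$, not $\gamma_K/\sigma_0$. Your identity $\frac{t_k}{\tau_k}(1+\mu\tau_k)=\frac{t_{k+1}}{\tau_{k+1}}$ is the right bookkeeping only in H\&A's setting, where $\mu$ is the modulus of the proximal term $f$ in $f+\Phi-h$. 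For $\mu>0$ you must either move the strongly convex part into the $x$-prox step or give a different argument. The paper's one-line citation does not address this mismatch either, so deferring to H\&A does not close it.

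\textbf{A smaller flaw in Step~1.} You say the initial candidate is harmless ``because the lower bound only concerns the value reached after backtracking.'' That is not a valid reason: if $\tau_k^\circ$ passes the test immediately, then $\tau_k=\tau_k^\circ$, so you need $\tau_k^\circ\ge\hat\tau_k$. This follows by induction:
\begin{itemize}
\item $\gamma_k$ is frozen during the backtracking loop of iteration $k$, so a failed test forces $\tau_k>\Psi_2\sqrt{\gamma_0/\gamma_k}$.
\item For either value of $c_{\rm nm}$, $\tau_k^\circ=\tau_{k-1}\sqrt{\frac{\gamma_{k-1}}{\gamma_k}(1+c_{\rm nm}\frac{\tau_{k-1}}{\tau_{k-2}})}\ge\tau_{k-1}\sqrt{\gamma_{k-1}/\gamma_k}\ge\hat\tau_k$.
\end{itemize}
The base case $\tau_0^\circ=\bar\tau\ge\eta\Psi_2$ must be assumed; the statement leaves it implicit. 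Otherwise $\hat\tau$ should be replaced by $\min\{\bar\tau,\eta\Psi_2\}$.
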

\begin{proof}
    \sa{These results are established in \cite[Theorem 2.7]{hamedani2021primal} for $c_{\rm nm}=0$. The same proof arguments continue to hold for $c_{\rm nm}=1$ verbatim.}
\end{proof}
\begin{lemma}[Monotone setting] Under the premise of \cref{thm:mainII}, consider \emph{\apdbyx} with $c_{\rm nm}=0$. \sa{The results stated in \cref{lem: apdb-sublinear} for \emph{\apdbxy} continue to hold for \emph{\apdbyx} with $\hat\tau$ redefined as in \cref{thm:mainII}, i.e., $\hat\tau=\eta\Psi_2$ with $\Psi_2$ given in \eqref{eq:psi12}.}
\label{lem: apdb-yx-monotone}
\end{lemma}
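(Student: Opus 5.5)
The proof of \cref{lem: apdb-sublinear} used only three ingredients. The first is the lower bound $\tau_k\geq\hat\tau\sqrt{\gamma_0/\gamma_k}$ together with $\sigma_k=\gamma_k\tau_k$. The second is the update rules for $\gamma_{k+1}$ and $\tau_{k+1}$ in \texttt{line}~\ref{algeq:tau} with $c_{\rm nm}=0$. The third is the rule that $\tau_k$ is shrunk by the factor $\eta$ at each failed backtracking test. None of these involves the order of the $x$- and $y$-updates or the particular form of $E_k$. I will therefore rerun that proof verbatim for \apdbyx{}, taking the first ingredient from \cref{thm:mainII}, which gives exactly $\tau_k\geq\hat\tau_k=\hat\tau\sqrt{\gamma_0/\gamma_k}$ with $\hat\tau=\eta\Psi_2$ and $\sigma_k=\gamma_k\tau_k$. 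The update rules for $\gamma_k$, $\tau_k$, $\sigma_k$, $t_k$, $T_K$ are unchanged in \apdbyx{}, since only lines \ref{algeq:alphabeta}--\ref{algeq:y-update} and $E_k$ are replaced.

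First, with $c_{\rm nm}=0$, induction gives $\tau_{k+1}\le\tau_k\sqrt{\gamma_k/\gamma_{k+1}}$ and $\tau_0\le\bar\tau$, hence $\tau_k\le\bar\tau\sqrt{\gamma_0/\gamma_k}$. This yields $\tau_k=\Theta(1/\sqrt{\gamma_k})$. For $\mu=0$ we have $\gamma_k\equiv\gamma_0$, so $\tau_k\in[\hat\tau,\bar\tau]$, $t_k\in[\hat\tau/\bar\tau,1]$, and $K\hat\tau/\bar\tau\le T_K\le K$. For $\mu>0$ with Euclidean $\varphi_p$, I combine $\gamma_{k+1}=\gamma_k(1+\mu\tau_k)$ with the two-sided bound on $\tau_k$ to get $\gamma_k+\Gamma\sqrt{\gamma_k}\le\gamma_{k+1}\le\gamma_k+\bar\Gamma\sqrt{\gamma_k}$, where $\Gamma=\mu\hat\tau\sqrt{\gamma_0}$ and $\bar\Gamma=\mu\bar\tau\sqrt{\gamma_0}$. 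The same inductions then give $\gamma_k\ge\Gamma^2k^2/9$ and $\gamma_k\le\gamma_0(1+\mu\bar\tau k/2)^2$. Using $\sigma_k=(\gamma_{k+1}-\gamma_k)/\mu$, I obtain $\sigma_k=\Theta(k)$, $t_k=\Theta(k)$, and $T_K\ge\frac{\Gamma^2}{12\mu\sigma_0}K^2$ for $K\ge2$.

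Finally, I count backtracking evaluations by telescoping $\sum_k\bigl(1+\log_{1/\eta}(\tau_k^\circ/\tau_k)\bigr)$. Here $\tau_k^\circ=\tau_{k-1}\sqrt{\gamma_{k-1}/\gamma_k}$ and $\tau_0^\circ=\bar\tau$, so the sum collapses to $K+\log_{1/\eta}\bigl(\bar\tau\sqrt{\gamma_0}/(\tau_{K-1}\sqrt{\gamma_{K-1}})\bigr)$. The lower bound on $\tau_{K-1}$ then bounds this by $K+\log_{1/\eta}(\bar\tau/\hat\tau)$. The only point needing care is confirming that \cref{thm:mainII}'s guarantee $\tau_k\ge\hat\tau_k$ is exactly what the backtracking loop certifies in \apdbyx{}, i.e., that the test in line~\ref{algeq:test} with the new $E_k$ always succeeds once $\tau_k\le\Psi_2\sqrt{\gamma_0/\gamma_k}$. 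I take this from \cref{thm:mainII}, so no genuine obstacle remains.
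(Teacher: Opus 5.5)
Your proposal is correct and follows the argument the paper intends. The paper states this lemma without a separate proof, as a rerun of the proof of \cref{lem: apdb-sublinear} in which only the step-size lower bound $\tau_k\ge\hat\tau\sqrt{\gamma_0/\gamma_k}$ is replaced by the one from \cref{thm:mainII}; this works because the $\gamma_k,\tau_k,\sigma_k,t_k$ updates and the backtracking-loop mechanics are identical in \apdbyx{}. The only place where the new $\hat\tau$ matters beyond substitution is the base case $\gamma_1\ge\Gamma^2/9$ of the induction behind $\gamma_k\ge\Gamma^2k^2/9$. You can make it explicit by noting that $\sqrt{1+a}-1\le a/2$ gives $\Psi_2\le(1-\delta)/\hat L_{xx}\le 1/\mu$, hence $\mu\hat\tau<1$.
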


\begin{lemma}[Non-monotone setting] Under the premise of \cref{thm:mainII}, consider \emph{\apdbyx} with $c_{\rm nm}=1$. \sa{The results stated in \cref{lem: apdb-sublinear-nm} for \emph{\apdbxy} continue to hold for \emph{\apdbyx} with $\hat\tau$ redefined as in \cref{thm:mainII}, i.e., $\hat\tau=\eta\Psi_2$ with $\Psi_2$ given in \eqref{eq:psi12}.} 
\label{lem: apdb-yx-sublinear-nm}
\end{lemma}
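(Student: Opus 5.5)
The plan is to repeat the proof of \cref{lem: apdb-sublinear-nm} line by line, after first isolating the only two ingredients that proof used. First, \cref{lem:apdb-xy-bounds} supplies the lower bound $\tau_k\ge\hat\tau\sqrt{\gamma_0/\gamma_k}$ and the identity $\sigma_k=\gamma_k\tau_k$. Second, the step-size recursion in \texttt{line}~\ref{algeq:tau} with $c_{\rm nm}=1$, namely \eqref{eq:nonmonotone-tau}, together with $\tau_{-1}=\tau_0=\bar\tau$ and the backtracking factor $\eta$. \apdbyx{} differs from \apdbxy{} only in the order of the primal and dual updates and in the test function $E_k$. The $\tau$-, $\sigma$-, $\gamma$- and $t_k$-updates, and the rule that $\tau_k$ is shrunk by $\eta$ until the test passes, are identical. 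For \apdbyx{} with $c_{\rm nm}=1$, \cref{thm:mainII} already gives the first ingredient with $\hat\tau=\eta\Psi_2$. So every statement whose proof used only these two facts transfers verbatim.

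Concretely, I will carry out three steps.

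\begin{enumerate}
\item \emph{Case $\mu=0$.} Here $\gamma_k\equiv\gamma_0$, so \cref{thm:mainII} gives $\tau_k\ge\hat\tau$, while $\tau_0\le\bar\tau$. Hence $t_k=\tau_k/\tau_0\ge\hat\tau/\bar\tau$, and $T_K\ge K\hat\tau/\bar\tau$.

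\item \emph{Case $\mu>0$.} Taking $\varphi_p=\tfrac12\|\cdot\|^2$, I will reuse the argument of \cref{lem: apdb-sublinear} as it applies without an upper bound on $\tau_k$. From $\gamma_{k+1}=\gamma_k(1+\mu\tau_k)$ and the lower bound on $\tau_k$ one gets \eqref{eq:gamma-diff-lower}, hence \eqref{eq:gamma-lower-bound}, so $\gamma_k=\Omega(k^2)$. Then $\sigma_k=(\gamma_{k+1}-\gamma_k)/\mu\ge\frac{\Gamma}{\mu}\sqrt{\gamma_k}\ge\frac{\Gamma^2}{3\mu}k$, which yields $t_k=\Omega(k)$ and $T_K\ge\frac{\Gamma^2}{12\mu\sigma_0}K^2$ for $K\ge2$. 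The bound $\tau_k=\cO(1/k)$ needs an upper bound on $\tau_k$; I would obtain it from the ratio bound $\tau_{k+1}/\tau_k\le\varphi$ in Step 3 together with the growth of $\gamma_k$, exactly as was asserted for \apdbxy. The one point to double-check is the initialization $\tau_0=\bar\tau\le\bar\tau$, which holds because $\tau_0$ is only ever decreased during backtracking.

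\item \emph{Counting backtracking evaluations.} Iteration $k$ costs $1+\log_{1/\eta}(\tau_k^\circ/\tau_k)$ test evaluations. By \eqref{eq:nonmonotone-tau}, $\tau_k^\circ\le\tau_{k-1}\sqrt{(\gamma_{k-1}/\gamma_k)(1+\tau_{k-1}/\tau_{k-2})}$. Summing over $k$ telescopes to
\[
K+\log_{1/\eta}\bigl(\bar\tau\sqrt{\gamma_0}/(\tau_{K-1}\sqrt{\gamma_{K-1}})\bigr)+\tfrac12\sum_{k}\log_{1/\eta}\bigl(1+\tau_{k-1}/\tau_{k-2}\bigr).
\]
The middle term is at most $\log_{1/\eta}(\bar\tau/\hat\tau)$ by the lower bound on $\tau_k$. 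For the last term, $a_{k+1}\le\sqrt{1+a_k}$ with $a_0\le1$ gives, by induction, $a_k\le\varphi$, and $\sqrt{1+\varphi}=\varphi$ converts the sum into at most $K\log_{1/\eta}\varphi$.
\end{enumerate}

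The main obstacle is conceptual rather than computational. I must confirm that nothing in the \apdbxy{} count depended on the specific test function, for instance on termination being guaranteed once $\tau_k\le\Psi_1$. In \apdbyx{} that role is played by $\Psi_2$ in \cref{thm:mainII}: the backtracking loop terminates whenever $\tau_k\le\Psi_2$, so each accepted $\tau_k$ satisfies $\tau_k\ge\eta\Psi_2\sqrt{\gamma_0/\gamma_k}$. This is exactly the input the counting argument uses, which completes the transfer.
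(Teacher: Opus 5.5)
Your plan is the one the paper intends. The paper gives no separate proof for this lemma and only points back to \cref{lem: apdb-sublinear-nm}. Step 1, Step 3, and the lower bounds in Step 2 ($\gamma_k\ge\Gamma^2k^2/9$, $\sigma_k\ge\frac{\Gamma}{\mu}\sqrt{\gamma_k}$, $T_K\ge\frac{\Gamma^2}{12\mu\sigma_0}K^2$) transfer verbatim. They use only $\tau_k\ge\hat\tau\sqrt{\gamma_0/\gamma_k}$ from \cref{thm:mainII} and the exact form of $\tau_k^\circ$. One small correction to your last paragraph: the yx test is guaranteed to pass once $\tau_k\le\Psi_2\sqrt{\gamma_0/\gamma_k}$, not once $\tau_k\le\Psi_2$. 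The conclusion you draw is nevertheless the one \cref{thm:mainII} supplies.

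\textbf{The gap is the item $\tau_k=\cO(1/k)$ for $\mu>0$.} Your suggested route cannot produce it. Using $\gamma_k/\gamma_{k+1}=1/(1+\mu\tau_k)$ and $1+\tau_k/\tau_{k-1}\le 1+\varphi=\varphi^2$, the recursion gives only $\tau_{k+1}\le\varphi\tau_k/\sqrt{1+\mu\tau_k}$. Hence $\tau_k\le\max\{\bar\tau,\varphi/\mu\}$, which is an $\cO(1)$ bound. In \cref{lem: apdb-sublinear}, the $\cO(1/k)$ bound came from $\tau_k\le\bar\tau\sqrt{\gamma_0/\gamma_k}$, which is exactly what $c_{\rm nm}=1$ destroys. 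The paper's own claim that this ``directly follows'' from the monotone argument has the same hole.

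In fact the item is false for \apdbyx{} with $c_{\rm nm}=1$. Take $n=2$, $f(x)=\frac{\mu}{2}\|x\|^2$, $A=e_1^\top$, $b=0$, $m=1$, $g\equiv-1$, $\cK=\reals_+$, $\cX=[-10,10]^2$, Euclidean Bregman distances, $\delta\in(0,1)$, $x^0=(0,1)$, $y^0=0$. By induction, $y^k=0$ and $x^k=(0,x_2^k)$ with $0<x_2^k\le 1$. In every trial, the $\alpha$-term and the $\mathbf{D}_d$-term of $E_k$ vanish, $x^{k+1}\neq x^k$, and the test reduces to $\mu\tau_k\le 1-\delta$. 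So any backtracking at iteration $k$ forces $\tau_k>\eta(1-\delta)/\mu$. If $\tau_k\to0$, backtracking would eventually stop. After that, $a_{k+1}=\tau_{k+1}/\tau_k=\sqrt{(1+a_k)/(1+\mu\tau_k)}\to\varphi>1$, which contradicts $\tau_k\to0$.

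This item should therefore be dropped, or replaced by the $\cO(1)$ bound above. Nothing downstream uses it; in particular \cref{thm: lr_rapdb} needs only the lower bound on $T_K$. What the lemma can legitimately assert for $\mu>0$ is the lower bounds on $\gamma_k,\sigma_k,t_k,T_K$ and the evaluation count, and your argument does establish those.
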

\subsection{Suboptimality and Infeasibility of APDB sequence}
\sa{Recall that $\hat \cZ$ denotes the restricted problem domain, i.e., $\hat\cZ=\cX\times\hat\cY$.} We first characterize the relationship between $\cZ^\star$ and $\hat{\cZ}^\star$, the sets of saddle points of \eqref{eq:minimax-pro} and \eqref{eq:minmax_rest}, respectively. 
{In the rest, we adopt the following notation}: $\Pi_x(z')=x'$ and $\Pi_y(z')=y'$ for all $z'=(x',y')\in\cX\times \cY$.\looseness=-10
\begin{lemma}\label{lem:zstar-hatzstar}
Under \cref{as:sets,as:Lipschitz-grads,as:saddle-point,as:dual-bound}, consider the two minimax optimization problems in \eqref{eq:minmax_rest} and \eqref{eq:minimax-pro} \sa{with the coupling function $\Phi$ as defined in \eqref{eq:ncc-coupling}.}
Then,
\[
    \emptyset\neq \hat{\cZ}\cap\cZ^\star=\hat{\cZ}^\star .
\]
Let $\hat{\cX}^\star\triangleq \Pi_x(\hat\cZ^\star)$ and $\hat{\cY}^\star\triangleq \Pi_y(\hat\cZ^\star)$, then
\[
    \hat{\cX}^\star=\cX^\star,
    \qquad
    \hat{\cY}^\star=\cY^\star\cap\hat{\cY}.
\]
Moreover, \(\cL(z^\star)=f^\star\) for any \(z^\star\in\hat\cZ^\star\), where $f^\star$ denotes the optimal value of \eqref{eq:conic-problem}.
\end{lemma}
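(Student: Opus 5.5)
The plan is to prove the two inclusions $\hat\cZ\cap\cZ^\star\subseteq\hat\cZ^\star$ and $\hat\cZ^\star\subseteq\hat\cZ\cap\cZ^\star$ directly from the saddle-point inequalities, and then read off the projections and the optimal value. Nonemptiness comes first. By \cref{as:dual-bound} there is $y^\star\in\cY^\star$ with $\norm{y^\star}\le\bar B<B$, so $y^\star\in\hat\cY$ and $y^\star\in{\rm int}(\cB)$. By \cref{lem:product-saddle-set}, $\cX^\star\neq\emptyset$ and $\cZ^\star=\cX^\star\times\cY^\star$. Hence $(x^\star,y^\star)\in\hat\cZ\cap\cZ^\star$ for any $x^\star\in\cX^\star$.

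The inclusion $\hat\cZ\cap\cZ^\star\subseteq\hat\cZ^\star$ is immediate. If $z^\star\in\cZ^\star$ lies in $\hat\cZ$, then the saddle inequalities for \eqref{eq:minimax-pro} hold for all $x$ and all $y$. In particular they hold for all $y\in\cB$, which is exactly the saddle condition for \eqref{eq:minmax_rest}.

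The reverse inclusion is the main step. Let $(\hat x,\hat y)\in\hat\cZ^\star$ and fix the point $(x^\star,y^\star)\in\hat\cZ\cap\cZ^\star$ found above. Both points are saddle points of \eqref{eq:minmax_rest}, so they share the same value. Using the saddle inequalities of both problems, I would chain
$\cL(\hat x,\hat y)\le\cL(x^\star,\hat y)\le\cL(x^\star,y^\star)\le\cL(\hat x,y^\star)\le\cL(\hat x,\hat y)$.
This gives $\cL(\hat x,\hat y)=f^\star$; by strong duality, $\cL(x^\star,y^\star)=f^\star$.

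Next I would show $\hat x\in\cX^\star$. The function $y\mapsto\cL(\hat x,y)$ is concave (affine on $\cY$), and over $\hat\cY$ it is maximized at $\hat y$ with value $f^\star$. Suppose $\hat x$ is not optimal. Then either it is infeasible, so that $\sup_{y\in\cY}\cL(\hat x,y)=+\infty$, or it is feasible with $f(\hat x)>f^\star$. In the feasible case, $\cL(\hat x,y^\star)\ge f(\hat x)>f^\star$ contradicts $\cL(\hat x,y^\star)\le\cL(\hat x,\hat y)=f^\star$; more directly, $\cL(\hat x,y^\star)\ge\cL(x^\star,y^\star)=f^\star$.

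The infeasible case is the obstacle, and here I would use the fact that $y^\star$ is interior to $\cB$. Take a direction $d\in\cY-y^\star$ along which $\cL(\hat x,\cdot)$ increases, e.g.\ $d=(A\hat x-b,\,\lambda')$ with $\lambda'\in\cK^*$ and $\langle\lambda',g(\hat x)\rangle>0$, which exists by infeasibility. Then for small $t>0$ the point $y^\star+td$ lies in $\hat\cY$, and $\cL(\hat x,y^\star+td)>\cL(\hat x,y^\star)\ge f^\star$. This contradicts the maximality of $\hat y$, whose value is $f^\star$. So $\hat x$ is feasible, and the argument above then gives $f(\hat x)\le\cL(\hat x,\hat y)$. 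To see this, use $\cL(\hat x,\hat y)\ge\cL(\hat x,0)=f(\hat x)$, valid since $0\in\hat\cY$. Hence $f(\hat x)\le f^\star$ and $\hat x\in\cX^\star$.

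For $\hat y$, I would argue as follows. By the saddle condition and concavity, $\cL(x,\hat y)\ge\cL(\hat x,\hat y)=f^\star$ for all $x\in\cX$. Therefore $q(\hat y)\ge f^\star$, so $\hat y\in\cY^\star$ by weak duality. Thus $(\hat x,\hat y)\in\cX^\star\times\cY^\star=\cZ^\star$ by \cref{lem:product-saddle-set}, and clearly $(\hat x,\hat y)\in\hat\cZ$.

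Finally, the projection identities follow from the product structure. We have $\hat\cZ^\star=\cZ^\star\cap\hat\cZ=\cX^\star\times(\cY^\star\cap\hat\cY)$. Since $\cY^\star\cap\hat\cY\neq\emptyset$, this yields $\hat\cX^\star=\cX^\star$ and $\hat\cY^\star=\cY^\star\cap\hat\cY$. The value claim $\cL(z^\star)=f^\star$ for $z^\star\in\hat\cZ^\star$ was already obtained in the chain of inequalities above.
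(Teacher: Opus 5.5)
Your proof is correct and follows essentially the same route as the paper. An interior dual solution $y^\star$ gives nonemptiness and the easy inclusion; perturbing $y^\star$ inside $\cB$ along an ascent direction of the affine map $\cL(\hat x,\cdot)$ forces $\hat x$ to be feasible; $0\in\hat\cY$ then gives $f(\hat x)\le f^\star$; and $q(\hat y)=f^\star$ puts $\hat y\in\cY^\star$. Your four-term chain simply makes explicit the paper's appeal to uniqueness of the saddle value.

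Two small slips do not affect the argument. First, for feasible $\hat x$ one has $\cL(\hat x,y^\star)\le f(\hat x)$, not $\ge$, so your ``feasible case'' remark proves nothing; it is the $0\in\hat\cY$ step that actually does the work. Second, if only $A\hat x\neq b$ is violated, no $\lambda'\in\cK^*$ with $\langle\lambda',g(\hat x)\rangle>0$ exists, so take $\lambda'=0$, or, as the paper does, $\lambda'=\Pi_{\cK^*}(g(\hat x))$, which covers both cases at once.
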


\begin{proof}
By \cref{as:dual-bound} and \cref{def:calB}, there exists
\(y^\star\in\cY^\star\cap {\rm int}(\cB)\). \sa{For the rest of the proof, we fix such $y^\star$.} Together with
\cref{as:saddle-point,lem:product-saddle-set}, this implies that, for any
\(x^\star\in\cX^\star\), $(x^\star,y^\star)\in \cZ^\star\cap\hat\cZ$; hence, \(\hat\cZ\cap\cZ^\star\neq\emptyset\).

We first show \(\hat\cZ\cap\cZ^\star\subseteq\hat\cZ^\star\). Indeed, for
any \((x^\star,y^\star)\in\hat\cZ\cap\cZ^\star\), the saddle point inequality for
\eqref{eq:minimax-pro} gives
\[
    \cL(x^\star,y)\le \cL(x^\star,y^\star)\le \cL(x,y^\star),
    \qquad
    \sa{\forall 
    x\in\reals^n,\quad \forall y\in\reals^p\times\reals^m.}
\]
Since 
$y^\star\in\cB$ and the same inequality
holds for all 
$y\in\cB$, we have
\((x^\star,y^\star)\in\hat\cZ^\star\), \sa{i.e., $\hat\cZ\cap\cZ^\star\subseteq\hat\cZ^\star$.} 

Note that \(\emptyset\neq \hat\cZ\cap\cZ^\star\subseteq\hat\cZ^\star\) implies that the restricted problem \eqref{eq:minmax_rest}
has the same saddle value \(f^\star\) as the original problem \eqref{eq:minimax-pro}. Indeed, \sa{since the saddle point value of a convex-concave minimax problem is unique, the fact that $\cL(z^\star)=f^\star$ for any $z^\star\in\hat{\cZ}^\star \cap \cZ^\star$ implies that $\cL(z^\star)=f^\star$ for all $z^\star \in \hat{\cZ}^\star$.}

Next, we show the reverse inclusion. Fix any
\((\hat x,\hat y)\in\hat\cZ^\star\). Since \eqref{eq:minmax_rest} has saddle
value \(f^\star\),
\[
    \max_{y\in
    \sa{\cB}}\cL(\hat x,y)=f^\star,
    \qquad
    \inf_{x}
    \cL(x,\hat y)=f^\star;
\]
hence, \(y^\star\in\sa{\cB}\) implies that $\cL(\hat x,y^\star)\le f^\star$.
On the other hand, since \(y^\star\in\cY^\star\), it holds that
\[
    \cL(x,y^\star)\ge f^\star,\qquad \forall x\in\sa{\reals^n}.
\]
Therefore, \(\cL(\hat x,y^\star)=f^\star\). Writing
\(y^\star=(v^\star,\lambda^\star)\), this means that 
$(v^\star,\lambda^\star)$ maximizes
the affine function
\[
    (v,\lambda)\mapsto
    v^\top(A\hat x-b)+\langle \lambda,g(\hat x)\rangle
\]
over \sa{$(\reals^p\times\cK^*)\cap\cB$, i.e., over \(\hat\cY=\cY\cap\cB\).} We now show that \(\hat x\) is primal feasible. For any 
\(\epsilon>0\), let $y(\epsilon)\triangleq (v(\epsilon), \lambda(\epsilon))$ such that $v(\epsilon)=v^\star+\epsilon (A\hat x-b)$ and $\lambda(\epsilon)=\lambda^\star+\epsilon \Pi_{\cK^*}(g(\hat{x}))\in\cK^*$; hence, $y(\epsilon)\in\cY$ for any $\epsilon>0$. Moreover, since \(y^\star\in{\rm int}(\cB)\), we have \(y(\epsilon)\in\hat\cY\) for all
sufficiently small \(\epsilon>0\). Fix such $\epsilon>0$ so that \(y(\epsilon)\in\hat\cY\).

Next, using the observation that 
\sa{$y^\star=(v^\star,\lambda^\star)$} maximizes $v^\top(A\hat x-b)+\langle \lambda,g(\hat x)\rangle$ over $\hat\cY$,  we get $v(\epsilon)^\top(A\hat x-b)+\langle \lambda(\epsilon),g(\hat x)\rangle \le
v^\star(A\hat x-b)+\langle \lambda^\star,g(\hat x)\rangle$, i.e.,
\begin{align}
    \label{eq:feasibility-ineq}
    \epsilon\|A\hat x-b\|^2+
    \epsilon
    \left\langle \Pi_{\cK^\ast}(g(\hat x)),g(\hat x)\right\rangle
    \le 0.
\end{align}
By Moreau decomposition,
\[
    g(\hat x)=\Pi_{-\cK}(g(\hat x))+\Pi_{\cK^\ast}(g(\hat x)),
    \qquad
    \left\langle
    \Pi_{-\cK}(g(\hat x)),\ \Pi_{\cK^\ast}(g(\hat x))
    \right\rangle=0;
\]
hence,
$\left\langle \Pi_{\cK^\ast}(g(\hat x)),\ g(\hat x)\right\rangle=
    \|\Pi_{\cK^\ast}(g(\hat x))\|^2$.
Therefore, \eqref{eq:feasibility-ineq} implies that $\epsilon\|A\hat x-b\|^2+
    \epsilon\|\Pi_{\cK^\ast}(g(\hat x))\|^2
    \le 0$,
which yields \(A\hat x=b\) and \({\rm dist}\big(g(\hat x),-\cK\big)=0\), i.e., \(g(\hat x)\in-\cK\). Furthermore, \sa{since \((\hat x,\hat y)\in\hat\cZ^\star\), we have $\hat x\in\cX$.} Thus, \(\hat x\) is primal feasible, \sa{i.e., $\hat x\in\cX_{\rm feas}$.} Moreover, since 
\sa{\(0\in\cB\),} we further have
\[
    f^\star=\max_{y\in\sa{\cB}}
    \cL(\hat x,y)
    \ge \cL(\hat x,0)=f(\hat x);
\]
\sa{therefore, $\hat x\in\cX_{\rm feas}$ implies that \(f(\hat x)=f^\star\); and we conclude that}
\(\hat x\in\cX^\star\).

It remains to show that \(\hat y\in\cY^\star\). Since $(\hat x,\hat y)\in\hat\cZ^\star$, we have
$\cL(\hat x,\hat y)\le \cL(x,\hat y)$ for all $x\in\reals^n$.
Thus,
\[
    q(\hat y)\triangleq \inf_{x\in\reals^n}\cL(x,\hat y)
    =
    \cL(\hat x,\hat y)=f^\star,
\]
which implies \(\hat y\in\cY^\star\). Since \(\hat y\in\hat\cY\), we have
\(\hat y\in\cY^\star\cap\hat\cY\). 
By the product structure
\(\cZ^\star=\cX^\star\times\cY^\star\), we obtain
\((\hat x,\hat y)\in\cZ^\star\); moreover, using
\((\hat x,\hat y)\in\sa{\cX\times\hat\cY}=\hat\cZ\), we get $(\hat x,\hat y)\in\hat\cZ\cap\cZ^\star$, i.e., 
\[
    \hat\cZ^\star\subseteq \hat\cZ\cap\cZ^\star.
\]
Combining the two inclusions yields
    $\hat\cZ^\star=\hat\cZ\cap\cZ^\star$.
Finally, using \(\cZ^\star=\cX^\star\times\cY^\star\), we have
\[
    \hat\cZ^\star
    =
    \hat\cZ\cap\cZ^\star
    =
    (\cX\times\hat\cY)\cap(\cX^\star\times\cY^\star)
    =
    \cX^\star\times(\cY^\star\cap\hat\cY).
\]
Thus, \(\hat\cX^\star=\cX^\star\) and
\(\hat\cY^\star=\cY^\star\cap\hat\cY\), which completes the proof. 
\end{proof}

\begin{table}[h!]
\centering
\begin{threeparttable}
\caption{Unified notation for the two APDB variants applied\tnote{1} to \eqref{eq:minmax_rest}.}
\label{tab:xy-yx-notation}
\renewcommand{\arraystretch}{1.15}
\setlength{\tabcolsep}{6pt}
\begin{tabular}{lccccc}
\toprule
Method
& \makecell{Choice\tnote{2}\\ of \(B\)}
& \makecell{Restriction set\\ \(\cB=\{y:~\norm{y}\leq B\}\)}
& \makecell{Dual\\ domain ($\hat \cY$)}
& \makecell{Primal-dual\\ domain ($\hat \cZ$)}
& \makecell{Set of saddle\\ points ($\hat\cZ^\star$)}  \\
\midrule
\texttt{APDB-xy}
& \(B=\infty\)
& $\cB=\reals^p\times\reals^m$
& \(\hat{\cY}=\cY\)
& \(\hat{\cZ}=\cX\times\cY\ (\hat{\cZ}=\cZ)\)
& \(\hat\cZ^\star=\cZ^\star\) \\
\texttt{APDB-yx}
& \(B>\bar B\)
& $\cB\subset\reals^p\times\reals^m$
& \(\hat{\cY}=\cY\cap\cB\)
& \(\hat{\cZ}=\cX\times\hat{\cY}\ (\hat{\cZ}\subset\cZ)\)
& \(\hat{\cZ}^\star \jw{=}\cZ^\star\cap\hat\cZ\) \\
\bottomrule
\end{tabular}
\begin{tablenotes}
\footnotesize
\item[1] Both \apdbxy{} and \apdbyx{} are applied to \eqref{eq:minmax_rest} with $\cL$ defined as in \eqref{eq:minimax-pro}.
\item[2] For \apdbxy{}, we set $B=+\infty$ so that \eqref{eq:minmax_rest} becomes equivalent to \eqref{eq:minimax-pro}, while for \apdbyx, $B>\bar B$ is chosen for some $\bar B$ satisfying \cref{as:dual-bound}.
\end{tablenotes}
\end{threeparttable}
\end{table}

\sa{To unify the analyses of both \apdbxy{} and \apdbyx{} (as well as their restarted versions), we consider the formulation in~\eqref{eq:minmax_rest} with $\cB\subset\reals^p\times\reals^m$ defined as in \eqref{eq: defcalB} for some $B\in\reals_+\cup\{+\infty\}$. Recall that \apdbxy{} is stated for the formulation in \eqref{eq:minimax-pro}, which can be considered as a special case of \eqref{eq:minmax_rest}. Indeed, for \apdbxy{}, setting $B = +\infty$ implies that $\cB = \mathbb{R}^p \times \mathbb{R}^m$; hence, \eqref{eq:minmax_rest} reduces to \eqref{eq:minimax-pro}, i.e., the dual domain of \eqref{eq:minmax_rest} becomes $\hat{\cY} = \cY \cap \cB = \cY$ implying that $\hat{\cZ} = \cZ$ which recovers \eqref{eq:minimax-pro}. Therefore, in the remaining part of this paper, for both \apdbxy{} and \apdbyx{} we consider the formulation given in \eqref{eq:minmax_rest} using the convention stated in \Cref{tab:xy-yx-notation}.}


The following result is an extension of \cite[Corollary 4.2]{hamedani2021primal}.
\begin{lemma}\label{lem:x-solve-ncp}
\sa{For \emph{\apdbxy}, suppose \cref{as:sets,as:Lipschitz-grads,as:saddle-point} hold, and for \emph{\apdbyx} in addition to these, suppose \cref{as:dual-bound} holds as well.} The primal sequence $\{\bar x^K\}_{K\ge 1}$ of either \emph{\apdbxy} or \emph{\apdbyx} approximately solves \eqref{eq:conic-problem} \sa{for any $c_{\rm nm} \in \{0,1\}$}, in the sense that
\begin{align*}
\max\Biggl\{
\dist
\left(
\begin{bmatrix}
A\bar{x}^K - b \\
-g(\bar{x}^K)
\end{bmatrix}
,~\{0\} \times \mathcal{K}
\right),
\ |f(\bar{x}^K)-f^\star|
\Biggr\}
= \mathcal{O}(1/T_K)
\end{align*}
for all $K\ge 1$; \sa{moreover, $T_K=\Omega(K)$ for $\mu=0$ while $T_K=\Omega(K^2)$ for $\mu>0$. Finally, for the case $\mu >0$, it also holds that} $\| \bar{x}^K - x^\star\|^2 = \cO(1/K^2)$.   
\end{lemma}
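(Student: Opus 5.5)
The plan is the standard ``gap-to-feasibility'' argument from \cite[Corollary 4.2]{hamedani2021primal}, run on the unified gap inequalities \eqref{eq: 4.16apd} and \eqref{eq: 3.3apd}. Write $\bar x\triangleq\bar x^K$. Fix $x^\star\in\cX^\star$ and a dual solution $y^\star=(v^\star,\lambda^\star)\in\cY^\star$ with $\norm{y^\star}\le\bar B$. For \apdbyx{} we also need $y^\star\in{\rm int}(\cB)$, which holds by \cref{as:dual-bound} and \cref{def:calB}; for \apdbxy{} such a $y^\star$ exists by \cref{lem:apdbxy-1}.

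\textbf{Step 1 (gap bound with a test dual point).} Drop the nonnegative Bregman terms in \eqref{eq: 4.16apd} or \eqref{eq: 3.3apd}, and take $x=x^\star$ together with a test point $y\in\hat\cY$ to be chosen. This gives $\cL(\bar x,y)-\cL(x^\star,\bar y^K)\le \Delta(x^\star,y)/T_K$. Since $y^\star$ maximizes $\cL(x^\star,\cdot)$ over $\cY$, we have $\cL(x^\star,\bar y^K)\le f^\star$. Hence
\[
\Phi(\bar x,y)-f^\star\le \Delta(x^\star,y)/T_K .
\]

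\textbf{Step 2 (choice of the test point).} Let $u\triangleq A\bar x-b$ and $w\triangleq\Pi_{\cK^*}(g(\bar x))$. By the Moreau decomposition used in \cref{lem:zstar-hatzstar}, $\norm{w}=\dist(g(\bar x),-\cK)$ and $\fprod{w,g(\bar x)}=\norm{w}^2$, so $\norm{(u,w)}$ equals the infeasibility measure in the statement. If $(u,w)\ne 0$, take
\[
y=y^\star+r\,(u,w)/\norm{(u,w)} .
\]
For \apdbxy{} any radius works, say $r=1$; for \apdbyx{} pick $r>0$ with $\norm{y^\star}+r\le B$. Then $y\in\hat\cY$, and $\Delta(x^\star,y)$ is bounded by a constant $C$ independent of $K$, using \eqref{eq:Bregman-bounds} and compactness of $\cX$. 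Expanding,
\[
\Phi(\bar x,y)=\Phi(\bar x,y^\star)+r\norm{(u,w)}\ge \cL(\bar x,y^\star)+r\norm{(u,w)}\ge f^\star+r\norm{(u,w)} .
\]
The last step is the saddle inequality. Combined with Step 1, this gives $\norm{(u,w)}\le C/(rT_K)$, which is the $\cO(1/T_K)$ infeasibility bound.

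\textbf{Step 3 (objective gap).} For the upper bound take $y=0\in\hat\cY$ in Step 1, which gives $f(\bar x)-f^\star\le\Delta(x^\star,0)/T_K$. For the lower bound, the saddle inequality gives
\[
f(\bar x)+\fprod{v^\star,u}+\fprod{\lambda^\star,g(\bar x)}\ge f^\star .
\]
Since $\lambda^\star\in\cK^*$ and $\Pi_{-\cK}(g(\bar x))\in-\cK$, we have $\fprod{\lambda^\star,g(\bar x)}\le\fprod{\lambda^\star,w}$. Therefore $f(\bar x)\ge f^\star-\bar B\norm{(u,w)}$, which is $f^\star-\cO(1/T_K)$ by Step 2.

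\textbf{Step 4 (growth of $T_K$ and the rate in $x$).} The rates $T_K=\Omega(K)$ and $T_K=\Omega(K^2)$ come directly from \cref{lem: apdb-sublinear,lem: apdb-sublinear-nm,lem: apdb-yx-monotone,lem: apdb-yx-sublinear-nm}.

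For the iterate rate when $\mu>0$, strong convexity gives $\cL(\bar x,y^\star)-\cL(x^\star,y^\star)\ge\frac{\mu}{2}\norm{\bar x-x^\star}^2$. The function $\cL(\cdot,y^\star)$ is $\mu$-strongly convex on $\cX$ and minimized at $x^\star$, so by first-order optimality the linear term is nonnegative. Take $y=y^\star$ in the gap bound, and use $\cL(x^\star,\bar y^K)\le\cL(x^\star,y^\star)$. This yields $\norm{\bar x-x^\star}^2\le 2\Delta(x^\star,y^\star)/(\mu T_K)=\cO(1/K^2)$.

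\textbf{Main obstacle.} The delicate step is Step 2 for \apdbyx{}. There the test point must stay in the compact set $\hat\cY$, so we need $y^\star$ strictly inside $\cB$ with a uniform margin $r$. Together with a $K$-independent bound on $\Delta(x^\star,y)$, this margin is what yields a constant independent of $K$.
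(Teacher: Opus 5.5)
Your proof is correct and is essentially the paper's argument: substitute $x=x^\star$ and a dual test point aligned with the residual $(A\bar x^K-b,\Pi_{\cK^*}(g(\bar x^K)))$ into \eqref{eq: 4.16apd} or \eqref{eq: 3.3apd}, use the Moreau decomposition, and pair the result with the saddle-point lower bound $\cL(\bar x^K,y^\star)\ge f^\star$. The differences are minor: the paper uses the unshifted point $(\|y^\star\|+\kappa)(u,w)/\|(u,w)\|$ with $B=\bar B+\kappa$ and then separates suboptimality from infeasibility, whereas your shifted point $y^\star+r(u,w)/\|(u,w)\|$ together with the choice $y=0$ isolates them directly; your strong-convexity argument for $\|\bar x^K-x^\star\|^2=\cO(1/K^2)$ makes explicit a step the paper's proof leaves implicit.
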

\begin{proof}
\sa{To simplify the notation, let $\cK_0\triangleq \{0\}\times\cK\subset\reals^p\times\reals^m$. Fix some $\kappa>0$. Consider \eqref{eq:minmax_rest} with $B=+\infty$ for \apdbxy{} and with $B=\bar B+\kappa$ for \apdbyx{}. Under this setting, for \apdbxy, since $\hat\cZ^\star=\cZ^\star\neq\emptyset$, fix some $z^\star=(x^\star,y^\star)\in\cZ^\star$; on the other hand, for \apdbyx{}, due to \cref{as:dual-bound}, let $z^\star=(x^\star,y^\star)$ such that $x^\star\in\cX^\star$ and $y^\star\in\cY^\star$ such that $\norm{y^\star}\leq \bar B$ --hence, \cref{lem:zstar-hatzstar} implies that $z^\star\in \hat\cZ^\star\cap\cZ^\star$.} Recall that under the premise of \cref{lem:apdb-xy-bounds}, \apdbxy{} iterate sequence satisfies \eqref{eq: 4.16apd}; similarly, under the premise of \cref{thm:mainII}, \apdbyx{} iterate sequence satisfies \eqref{eq: 3.3apd}.
Next, for $k\geq 0$, 
we define \sa{$\hat y^k$} such that $\sa{\hat y^k} \triangleq 0$ \sa{if both $A\bar{x}^k-b=0$ and $\Pi_{\mathcal{K}^*}\bigl(g(\bar{x}^k)\bigr)$=0; otherwise, we set}
\[ \hat y^k \triangleq (\|y^\star\|+\kappa)
\begin{bmatrix}
A\bar{x}^k-b\\
\Pi_{\mathcal{K}^*}\bigl(g(\bar{x}^k)\bigr)
\end{bmatrix}
\left\|
\begin{bmatrix}
A\bar{x}^k-b\\
\Pi_{\mathcal{K}^*}\bigl(g(\bar{x}^k)\bigr)
\end{bmatrix}
\right\|^{-1} \in \hat{\cY}
.\] Then, for any $K\geq 1$, \eqref{eq: 4.16apd} and \eqref{eq: 3.3apd} imply that ${\cL}(\bar x^K,\hat y^K)-{\cL}(x^\star,\bar y^K)\leq \Delta(x^\star,\hat y^K)/T_K$ holds for both \apdbxy{} and \apdbyx, \sa{where $\Delta(\cdot,\cdot)$ is defined in \eqref{eq:Delta}.} Observe that \sa{using the definition of $\cL$ given in \eqref{eq:minimax-pro}, we get}
\begin{align*}
{\cL}(\bar{x}^K,\hat y^K) =
f(\bar{x}^K)+
\left\langle
\hat{y}^K,
\begin{bmatrix}
A\bar{x}^K-b\\
g(\bar{x}^K)
\end{bmatrix}
\right\rangle
=
f(\bar{x}^K)
+
(\|y^\star\|+\kappa)\cdot
\dist\left(
\begin{bmatrix}
A\bar{x}^K-b\\
-g(\bar{x}^K)
\end{bmatrix}
,~\cK_0\right),
\end{align*}
where the last equality comes from the facts that for any $w\in \mathbb{R}^m$,
\[
w=\Pi_{-\mathcal{K}}(w)+\Pi_{\mathcal{K}^*}(w), \quad
\left\langle \Pi_{-\mathcal{K}}(w),\Pi_{\mathcal{K}^*}(w)\right\rangle=0, \quad \text{and}\quad  d_{-\cK}(w) = \| \Pi_{\cK^*}(w)\|.
\]
In addition, we have $f^\star = {\cL}(x^\star,y^\star) \ge
{\cL}(x^\star,{\bar{y}^K})$. \sa{Finally, note that $\norm{\hat y^K}\leq\norm{y^\star}+\kappa$ for all $K\geq 1$; therefore, $\sup\{\Delta(x^\star,\hat y^K):\ K\geq 1\}=\cO(1)$.}
As a result, the following inequality holds for both \apdbxy{} and \apdbyx{}:
\begin{align}
 f(\bar{x}^K)-f^\star+
(\|y^\star\|+\kappa )\cdot 
\dist
\left(
\begin{bmatrix}
A\bar{x}^K-b\\
-g(\bar{x}^K)
\end{bmatrix}
,~\cK_0\right)
\le
\frac{1}{T_K}\Delta(x^\star,\hat{y}^K)\sa{=\cO\left(\frac{1}{T_K}\right)}.  
\label{eq:subinfe1}
\end{align}
On the other hand, \sa{since $y^\star\in\reals^p\times\cK^*$, for any $y\in\reals^p\times\reals^m$, we have $\fprod{y^\star,y}\leq \fprod{y^\star,\Pi_{\reals^p\times\cK^*}(y)}\leq \norm{y^\star}\cdot 
\dist(y, -\cK_0)$; hence, we get}
\begin{align}\label{eq:subinfe2}
0& \le {\cL}(\bar{x}^K,y^\star)-{\cL}(x^\star,y^\star) =
f(\bar{x}^K)-f^\star+
\left\langle
y^\star,
\begin{bmatrix}
A\bar{x}^K-b\\
g(\bar{x}^K)
\end{bmatrix}
\right\rangle \nonumber\\
&\le
f(\bar{x}^K)-f^\star+ \|y^\star\|\cdot
\dist
\left(\begin{bmatrix}
A\bar{x}^K-b\\
-g(\bar{x}^K)
\end{bmatrix}
,~\cK_0\right).   
\end{align}
Combining \eqref{eq:subinfe1} and \eqref{eq:subinfe2}
gives the desired bound.
\end{proof}

\section{Restarted APDB: Algorithm and Convergence Analysis}
\sa{Based on the unifying convention discussed in \Cref{tab:xy-yx-notation}, throughout this section we consider \eqref{eq:minmax_rest}; hence, setting $B=+\infty$, we can also investigate the behavior of \rapdbxy{} on \eqref{eq:minimax-pro} as well.}
Duality gap given in \Cref{def:gap} and smoothed duality gap given in \Cref{def: smoothed_duality_gap1} are two widely used metrics 
for minimax problems. 
\begin{definition}[Duality gap]
\label{def:gap}
\sa{Consider the minimax problem \eqref{eq:minmax_rest} for $\cB\subseteq\reals^p\times\reals^m$, i.e., for some $B\in\reals_+\cup\{+\infty\}$.} \sa{Let $\hat\cL:\reals^n\times\reals^{p+m}\to\reals\cup\{\pm\infty\}$ such that $\hat\cL(x,y)\triangleq\iota_{\mathcal{X}}(x) + 
\Phi(x,y) - \iota_{\hat{\mathcal{Y}}}(y)$, where $\hat{\mathcal{Y}} \triangleq \cY \cap \mathcal{B}$ denotes the 
dual domain and $\Phi$ is defined in~\eqref{eq:ncc-coupling}.}

For any \sa{$z=(x,y)\in\reals^n\times\reals^{p+m}$}, the duality gap at $z$ is defined as 
\begin{align}
    \sa{\cG(z)\triangleq\sup_{z'}\{\hat{\cL}(x,y')-\hat{\cL}(x',y):\ z'=(x',y')\in\reals^n\times\reals^{p+m}\}.}
\end{align}
\end{definition}
Since \eqref{eq:minmax_rest} is a convex-concave saddle point problem, for any \sa{$z^\star\in\hat\cZ^\star$}, 
it holds that $\cG(z^\star)=0$, i.e., any 
\sa{saddle point of} \eqref{eq:minmax_rest} has zero duality gap. \sa{Therefore, under \Cref{as:saddle-point}, according to \cref{lem:product-saddle-set}, for any primal-dual optimal solution $z^\star=(x^\star,y^\star)$ to \eqref{eq:conic-problem}, it holds that $\cG(z^\star)=0$.}
Furthermore, the duality gap is always nonnegative on $\hat{\cZ}$, i.e., 
\[
\sa{\cG(z) = \sup_{z'}\hat{\cL}(x,y')-\hat{\cL}(x',y)
\ge \Phi(x,y)-\Phi(x,y) = 0,\quad \forall~z \in \hat{\cZ}.}
\]
However, the duality gap cannot properly measure the algorithmic progress towards primal-dual optimality in case the problem domain \jw{$\hat{\cZ}$} is unbounded --as the supremum over $\hat{\cZ}$ may be infinite; \sa{indeed, this is the case for \eqref{eq:minimax-pro} since the dual domain $\cY$ is unbounded.} 
To overcome this issue, the \emph{smoothed duality gap} was proposed in \cite{fercoq2023quadratic} which \sa{modifies the definition of $\cG(\cdot)$ by adding a quadratic regularization term so that the supremum is computed for a strongly concave function;} hence, it always takes a finite value. Given some reference point $\dot z \in \sa{\cZ}$ and a regularization parameter $\xi>0$, we next define the smoothed duality gap function $\mathcal{G}_{\xi}(\cdot;\dot z)$ over $\sa{\reals^n\times\reals^{p+m}}$ using Bregman distances.
\begin{definition}[Smoothed duality gap]
\label{def: smoothed_duality_gap1}
For any given $\xi> 0$ and $\dot z\in \sa{\cZ}$, the smoothed duality gap at \sa{$z=(x,y)\in\reals^n\times\reals^{p+m}$},
centered at the reference point $\dot z$ is defined as
\[
\mathcal{G}_{\xi}(z;\dot z) \triangleq
\sup_{z'} 
\left\{\, \mathcal{Q}(z,z')-2{\xi}\textbf{D}(z',\dot z)\right\},\quad\mbox{where}\quad \mathcal{Q}(z,z')\triangleq \hat{\cL}(x,y')-\hat{\cL}(x',y),
\]
and $\hat\cL(\cdot,\cdot)$ is given in \Cref{def:gap}. In particular, $\mathcal{G}_{\xi}(z) \triangleq \mathcal{G}_{\xi}(z;z)$ is called the (self-centered) smoothed duality gap.
\end{definition}
For any fixed $\xi>0$, it is easy to observe that $\mathcal{G}_\xi(z) \ge 0$ for all $z \in \hat{\cZ}$. 
\sa{Moreover, for any $z^\star \in \hat{\cZ}^\star$,} 
\begin{align*}
\mathcal{G}_\xi(z^\star) &= \sup_{z'}
\left\{\, \hat{\cL}(x^\star,y')-\hat{\cL}(x',y^\star)-2{\xi}\textbf{D}_{p}(x',x^\star)-2{\xi}\textbf{D}_{d}(y',y^\star)\right\}\\
& \le \sup_{z'}
\left\{\, \hat{\cL}(x^\star,y')-\hat{\cL}(x',y^\star) \right\}=\sup_{y'}\hat{\cL}(x^\star,y')-\inf_{x'}\hat{\cL}(x',y^\star)=0,
\end{align*}
where the 
\sa{final} equality follows from $z^\star\in\sa{\hat\cZ^\star}$. 
This, together with $\mathcal{G}_\xi(z^\star)\ge 0$, yields $\mathcal{G}_\xi(z^\star)= 0$. \sa{Conversely, for any $z^\star\in\hat\cZ$, if $\mathcal{G}_\xi(z^\star)= 0$, then  $z^\star\in\hat\cZ^\star$ --in the interest of space, we omit the proof\footnote{This result follows from the arguments in the proof of \cref{thm: qg}. For the proof in the Euclidean case, i.e., $\varphi_p(\cdot)=\varphi_d(\cdot)=\frac{1}{2}\| \cdot \|^2$, see \cite[Proposition 32]{fercoq2023quadratic}.}.}
\subsection{APDB with fixed-frequency restarts}
Building on the \texttt{APDB-xy} and \texttt{APDB-yx} schemes discussed above, we propose a restarted algorithm, referred to as \texttt{rAPDB}, \sa{which is displayed in \Cref{alg:pdhg-qcqp}. The proposed \texttt{rAPDB} method has two variants: \texttt{rAPDB-xy} and \texttt{rAPDB-yx}, depending on whether \texttt{APDB-xy} or \texttt{APDB-yx} is employed, respectively, as the base algorithm.} The method \rapdb{} incorporates an additional restart mechanism when compared to \texttt{APDB}, \sa{i.e., after every fixed number of \texttt{APDB} iterations, say $\hat{K}\in\integers_+$, the \sa{base} method is restarted using the weighted average iterate obtained at the end of $\hat{K}$-th  \texttt{APDB} iteration as the initial point.} \sa{Restarting} strategies have been shown to improve both practical performance and theoretical convergence \sa{properties} in convex optimization \cite{o2015adaptive,aujol2024parameter,roulet2020sharpness,renegar2022simple,tang2018rest} as well as in convex-concave minimax optimization with bilinear couplings \cite{applegate2023faster,lu2026practical,fercoq2023quadratic}. A lower bound on the restart period $\hat{K}\in\integers_+$ to ensure a fast convergence is given in Theorem \ref{thm: lr_rapdb} for both \texttt{rAPDB-xy} and \texttt{rAPDB-yx}, \sa{where we establish that the sublinear convergence rate of \texttt{APDB} can be improved to linear convergence rate through the restarting mechanism.}

\begin{algorithm}[h!]
\caption{\rapdb{} for \eqref{eq:minmax_rest} }
\label{alg:pdhg-qcqp}
\begin{algorithmic}[1]
\Require ${\rm routine}\in\{\apdbxy,\apdbyx\}$
\Require $\mu\ge0$, $c_\alpha,c_\beta,\delta \ge 0$, 
$\eta\in(0,1)$, $\bar\tau$, $\gamma_0>0$, \jw{$c_{\rm nm}\in\{0,1\}$}, 
$\hat{K}\in\integers_+$
\State Initialize $(x^{0,0},y^{0,0})\in \mathcal{X} \times \hat{\cY}$ and $\sa{t}\leftarrow 0$
\While{stopping criteria are not met}
\State \sa{$(\bar{x}^{t,\hat{K}},\bar{y}^{t,\hat{K}})\gets {\rm routine}(x^{t,0},y^{t,0},\hat K)$}
\State 
\sa{$(x^{t+1,0},y^{t+1,0}) \gets (\bar{x}^{t,\hat{K}},\bar{y}^{t,\hat{K}})$} \label{algrapdb-line4}
\State \sa{$t\leftarrow t+1$}
\EndWhile
\end{algorithmic}
\end{algorithm}

\subsection{APDB with adaptive restarts}
In practice, tuning an appropriate restart frequency $\hat{K}\in\integers_+$ is challenging \sa{since the theoretical lower bound established in Theorem \ref{thm: lr_rapdb} depends on the quantities \jw{${\rho}$} and $\hat{\rho}$ that are usually unknown. To mitigate this practical problem,} 
we propose the following adaptive restart scheme: \sa{given some 
$\xi > 0$ and contraction parameter $q \in (0,1)$, for each outer iteration $t\in\integers_+$ of \rapdb, the base subroutine, i.e., either \apdbxy{} or \apdbyx{}, is} restarted, i.e., $z^{t+1,0}=\bar{z}^{t,K}$, whenever the condition,
\begin{align}\label{eq: ada_restart}
\mathcal{G}_\xi(\bar z^{t,K}) \le q\, \mathcal{G}_{\xi}(z^{t,0}) 
\end{align}
\sa{holds for some $K\in\integers_+$}. In the rest of the paper, we refer to \texttt{rAPDB-xy} and \texttt{rAPDB-yx} with adaptive restarts as \texttt{rAPDB-xy-ada} and \texttt{rAPDB-yx-ada}, respectively.
\begin{remark}
Computing 
$\mathcal{G}_\xi$ at any given point $(x,y) \in \hat{\cZ}$ boils down to solving the following  problems:
\begin{align}
    \min_{\hat{x} \in \mathcal{X}} &\; f(\hat{x}) + v^\top (A \hat{x} - b) + \langle \lambda, g(\hat{x})\rangle + 2{\xi}\textbf{D}_{p}(\hat{x},x), \label{eq:ada-compute-x}\\
    \max_{\hat{y} \in \hat{\cY}} & \; f(x) + \hat{v}^\top (A x - b) + \langle {\hat{\lambda}}, g(x) \rangle - 2{\xi}\textbf{D}_{d}(\hat{y},y),\label{eq:ada-compute-y}
\end{align}
\sa{which can be solved efficiently for several important special cases.} In particular, 
for the Bregman distances $\textbf{D}_{p}(\hat x,x)=\frac{1}{2}\norm{\hat x-x}^2$ and $\textbf{D}_{d}(\hat y,y)=\frac{1}{2}\norm{\hat y-y}^2$, 
\eqref{eq:ada-compute-y} reduces to a simple projection problem onto $\hat{\cY}$. These projections can be computed efficiently for many important examples of $\cK$, such as the nonnegative orthant, the second-order cone, and the exponential cone, e.g., see~\cite[Section 6.3]{parikh2014proximal}, \cite{friberg2023projection}, and \cite{bauschke2018projecting}. On the other hand, by exploiting the quadratic structure of \eqref{eq: qcqp-pro}, if the feasible set $\mathcal{X}$ is described by linear constraints, such as box constraints or simplex, then \eqref{eq:ada-compute-x} reduces to solving a strongly convex quadratic program. If instead $\mathcal{X}$ is described by a Euclidean ball constraint, then \eqref{eq:ada-compute-x} reduces to solving a strongly convex trust-region subproblem.
\end{remark}
\subsection{Metric subregularity and quadratic growth}
\sa{In order to 
establish linear convergence of the iterate sequence generated by \rapdb{} and \rapdbada{} 
for \jw{solving \eqref{eq:minmax_rest}}, we require that the KKT map satisfies the metric subregularity condition around each $z^\star\in\hat\cZ^\star$.} 
\begin{assumption}[Metric subregularity \cite{dontchev2009implicit,facchinei2003finite,rockafellar2009variational}]
\label{assu: metric-subreg}
\sa{Let $\hat\cL:\reals^n\times\reals^{p+m}\to\reals\cup\{\pm\infty\}$ such that $\hat\cL(x,y)\triangleq\iota_{\mathcal{X}}(x) + 
\Phi(x,y) - \iota_{\hat{\mathcal{Y}}}(y)$, where $\hat{\mathcal{Y}} \triangleq \cY \cap \mathcal{B}$ denotes the 
dual domain and $\Phi$ is defined in~\eqref{eq:ncc-coupling}.}
The set-valued map $\mathcal{F}: \jw{\hat{\cZ}} \rightrightarrows \mathbb{R}^{n+p+m}$, 
\[
\mathcal{F}(z)
\triangleq
\begin{bmatrix}
\partial_x \sa{\hat\cL(z)} \\
\partial_y (-\sa{\hat\cL(z)})
\end{bmatrix},
\]
associated {with
\eqref{eq:minmax_rest}} is metrically subregular \sa{locally} at every 
\sa{saddle point $z^\star \in \hat\cZ^\star$, i.e.,} 
for every $z^\star \in \sa{\hat\cZ^\star}$, 
there exists an open neighborhood
\sa{$\cB_{\varepsilon_{z^\star}}(z^\star)$} centered at $z^\star$ with the radius $\varepsilon_{z^\star}>0$ and a constant
$\rho_{z^\star} > 0$ (depending on $z^\star$) such that
\begin{align}\label{eq: metric-subreg}
{\rm dist}(z, \hat{\cZ}^\star)
\le
\rho_{z^\star}\cdot
{\rm dist}(0, \mathcal{F}(z)),
\quad
\forall z \in \sa{\hat\cZ} \cap \cB_{\varepsilon_{z^\star}}(z^\star).\end{align}
\end{assumption}

There are various \sa{results in the literature} establishing sufficient conditions under which Assumption \ref{assu: metric-subreg} holds, \sa{e.g.,} see \cite{zheng2007metric,BurkeEngle2020}, \cite[Section 4]{robinson1980strongly}, and \cite[Section 6]{facchinei2003finite}. \sa{We present one such result below for a smooth constrained optimization problem that is very general and extends \cite[Proposition 6.2.7]{facchinei2003finite}. In its full generality, it requires neither the convexity of the feasible set nor the objective function; moreover, it does not make some strong assumptions that are adopted frequently to make the analysis simpler, i.e., it does not require LICQ, does not require multiplier uniqueness, and does not require strict complementarity.}
\begin{theorem}
\label{thm:jongshi}
Let \(\Kcone\subset\R^m\) be a closed convex polyhedral cone and let \(\cX\subset\R^n\) be a nonempty polyhedral set. Consider the following nonlinear optimization problem:
\begin{equation}
\label{eq:js-general_opt}
\min\{f(x):\ x\in \feas\}\quad \mbox{with}\quad \feas\triangleq\{x\in \cX:\ h(x)=0,\ g(x)\in-\Kcone\},
\end{equation}
where \(f:\R^n\to\R\), \(g:\R^n\to\R^m\) and \(h:\R^n\to\R^p\) are twice continuously differentiable near a KKT point \(\bar x\). 
Define the smooth part of the Lagrangian map by
\begin{equation}
\label{eq:L-def}
\Phi(x,v,\lambda)\triangleq f(x)+\langle v,~h(x)\rangle+\langle \lambda,~g(x)\rangle.
\end{equation}
Let $\cM(\bar x)$ denote the multiplier set at \(\bar x\), i.e.,
\begin{equation*}
\cM(\bar x)\triangleq
\left\{(v,\lambda)\in\reals^p\times\cK^*:
-\nabla_x\Phi(\bar x,v,\lambda)\in \cN_{\cX}(\bar x),\quad \langle \lambda,g(\bar x)\rangle=0
\right\}.
\end{equation*}
For $x\in \cX$, $v\in\reals^p$, and $\lambda\in\cK^*$, define the residual
\begin{equation*}
r(x,v,\lambda)\triangleq
\dist\bigl(0,\nabla_x\Phi(x,v,\lambda)+\cN_{\cX}(x)\bigr)+
\|h(x)\|+
\dist\bigl(0,-g(x)+\cN_{\cK^*}(\lambda)\bigr).
\end{equation*}

Suppose conic MFCQ \cite[Eq.(2.190) and Eq.(2.196)]{bonnans2000perturbation} 
holds relative to \(\cX\) at \(\bar x\), i.e.,
\begin{enumerate}
\item[(i)] the equality system is nondegenerate relative to \(\cX\):
\begin{equation}
\label{eq:relative-equality-nondegeneracy}
\nabla h(\bar x)^\top v+s=0,\quad s\in \cN_{\cX}(\bar x)
\quad\Longrightarrow\quad
v=0,\ s=0;
\end{equation}
\item[(ii)] there exists \(d\in \cT_{\cX}(\bar x)\) such that
\begin{equation}
\label{eq:relative-mfcq-direction}
\nabla h(\bar x)d=0,
\qquad
g(\bar x)+\nabla g(\bar x)d\in -\operatorname{int}(\Kcone).
\end{equation}
\end{enumerate}
The conditions in \eqref{eq:relative-equality-nondegeneracy}--\eqref{eq:relative-mfcq-direction} imply that 
\(\cM(\bar x)\) is bounded. Finally, define the critical cone\footnote{$\cC(\bar x;\feas,\nabla f)={\cT}_{\mathcal{X}_{\rm feas}}(\bar x) \cap \left(\nabla {f}(\bar x)\right) ^\perp$, where ${\cT}_{\mathcal{X}_{\rm feas}}(\bar x)$ denotes the tangent cone to $\feas$ at $\bar x$.}
\begin{equation}
\label{eq:critical-cone-def}
\cC(\bar x;\feas,\nabla f)\triangleq
\left\{
 d:
 d\in \cT_{\cX}(\bar x),\quad
 \nabla h(\bar x)d=0,
 \quad
 \nabla g(\bar x)d\in \cT_{-\cK}(g(\bar x)),
 \quad
 \langle \nabla f(\bar x),d\rangle=0
\right\}.
\end{equation}
Assume that for every \((v,\lambda)\in \cM(\bar x)\), the following 
system has only the trivial solution \(d=0\):
\begin{equation}
\label{eq:homogeneous-system-no-q}
\cC(\bar x;\feas,\nabla f)\ni
 d\perp \nabla_{xx}^2\Phi(\bar x,v,\lambda)d
\in \cC(\bar x;\feas,\nabla f)^*.
\end{equation}
Then, \jw{$\bar x$} is an isolated KKT point, and there exist constants \(c>0\) and \(\varepsilon>0\) such that 
\begin{equation}
\label{eq:error-bound-conclusion}
x\in \cX,
\quad
\|x-\bar x\|+
\dist((v,\lambda),\cM(\bar x))\le \varepsilon \implies \|x-\bar x\|+
\dist((v,\lambda),\cM(\bar x))
\le
c\,r(x,v,\lambda).
\end{equation}
\end{theorem}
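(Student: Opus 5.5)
The proof runs by contradiction through a normalized sequence, in the spirit of \cite[Proposition 6.2.7]{facchinei2003finite}. Polyhedrality of $\cX$ and $\Kcone$ is what makes the limiting arguments work: normal cones and tangent cones of polyhedral sets are locally constant in the sense of faces, and Hoffman-type bounds hold for them.

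\emph{Step 1: boundedness of $\cM(\bar x)$.} Suppose $(v_k,\lambda_k)\in\cM(\bar x)$ with $\|(v_k,\lambda_k)\|\to\infty$. Normalize and pass to a limit $(v,\lambda)\neq 0$ with $\lambda\in\cK^*$, $\langle\lambda,g(\bar x)\rangle=0$, and
\[
-\nabla h(\bar x)^\top v-\nabla g(\bar x)^\top\lambda\in\cN_{\cX}(\bar x).
\]
Pair this with the direction $d$ from (ii). This gives $\langle\lambda,\nabla g(\bar x)d\rangle\ge 0$. On the other hand, $\langle\lambda, g(\bar x)+\nabla g(\bar x)d\rangle<0$ whenever $\lambda\ne0$, so $\lambda=0$. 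Then (i) forces $v=0$, a contradiction.

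\emph{Step 2: the error bound, by contradiction.} Suppose there are $x_k\to\bar x$ and $(v_k,\lambda_k)$ with $\dist((v_k,\lambda_k),\cM(\bar x))\to0$ such that
\[
r_k\triangleq r(x_k,v_k,\lambda_k)=o(t_k),\qquad t_k\triangleq\|x_k-\bar x\|+\dist((v_k,\lambda_k),\cM(\bar x)).
\]
Let $(\bar v_k,\bar\lambda_k)$ be the projection of $(v_k,\lambda_k)$ onto $\cM(\bar x)$; this set is bounded by Step 1, so a subsequence converges to some $(\bar v,\bar\lambda)\in\cM(\bar x)$. There are two cases.

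\emph{Case A: $\|x_k-\bar x\|/t_k\to0$ along a subsequence.} The dual distance then dominates. Linearize the residual at $\bar x$: the perturbed KKT system in $(v,\lambda)$ for fixed $\bar x$ is a polyhedral system, because $\cN_{\cX}(\bar x)$ and the set $\{\lambda\in\cK^*:\langle\lambda,g(\bar x)\rangle=0\}$ are polyhedral for $x_k$ near $\bar x$, using local constancy of faces. Hoffman's bound then gives
\[
\dist((v_k,\lambda_k),\cM(\bar x))\le C\bigl(r_k+\|x_k-\bar x\|\bigr)=o(t_k),
\]
which is a contradiction.

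\emph{Case B: $\|x_k-\bar x\|\ge ct_k$.} Set $d_k=(x_k-\bar x)/\|x_k-\bar x\|\to d\ne0$. From $r_k=o(\|x_k-\bar x\|)$ and polyhedrality, show that $d\in\cT_{\cX}(\bar x)$, $\nabla h(\bar x)d=0$, and $\nabla g(\bar x)d\in\cT_{-\cK}(g(\bar x))$. The last uses $g(x_k)$ being within $o(\|x_k-\bar x\|)$ of $-\cK$, via the normal-cone residual together with MFCQ-based boundedness of $\lambda_k$. Use the multiplier condition to get $\langle\nabla f(\bar x),d\rangle=0$, so $d\in\cC$. Next, expand
\[
\nabla_x\Phi(x_k,v_k,\lambda_k)=\nabla_x\Phi(\bar x,v_k,\lambda_k)+\nabla^2_{xx}\Phi\,(x_k-\bar x)+o(\cdot).
\]
Subtract the stationarity inclusion at $\bar x$, exploit the polyhedral face structure (normal cones at $x_k$ lie inside normal cones at $\bar x$ and are orthogonal to the corresponding tangent directions), and divide by $\|x_k-\bar x\|$. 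In the limit this yields $\nabla^2_{xx}\Phi(\bar x,\bar v,\bar\lambda)d\in\cC^*$ and $\langle d,\nabla^2_{xx}\Phi d\rangle=0$. That contradicts \eqref{eq:homogeneous-system-no-q}.

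\emph{Main obstacle.} The hard part is Case B's passage from the residual to $\nabla^2_{xx}\Phi\, d\in\cC^*$ with the complementarity $d\perp\nabla^2_{xx}\Phi\, d$. The multipliers $(v_k,\lambda_k)$ differ from $(\bar v_k,\bar\lambda_k)$ by $O(t_k)$, not $o(\|x_k-\bar x\|)$, so the term $\nabla g(\bar x)^\top(\lambda_k-\bar\lambda_k)$ must be absorbed into $\cC^*$ rather than dropped. The plan is to show that $\nabla h(\bar x)^\top(v_k-\bar v_k)+\nabla g(\bar x)^\top(\lambda_k-\bar\lambda_k)$, after adding normal-cone elements, lies in $\cC^*$ (the polar of the linearized cone, which is finitely generated by polyhedrality), and that its inner product with $x_k-\bar x$ is $o(\|x_k-\bar x\|^2)$ by complementarity on common faces. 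Passing to a subsequence on which the active faces of $\cX$ at $x_k$ and of $\cK^*$ at $\lambda_k$ are fixed makes this tractable.

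Isolatedness of $\bar x$ as a KKT point follows directly from \eqref{eq:error-bound-conclusion}: for any KKT point $x$ near $\bar x$ with multiplier near $\cM(\bar x)$ we have $r=0$, hence $x=\bar x$. Multipliers of nearby KKT points stay bounded and near $\cM(\bar x)$ by MFCQ and outer semicontinuity, so the error bound applies to them.
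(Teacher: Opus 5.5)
Your outline is sound and follows the same contradiction skeleton as the paper. Both arguments get boundedness of $\cM(\bar x)$ from conic MFCQ, then a Hoffman-type estimate forcing $t_k=\cO(\|x_k-\bar x\|)$ (your Case A), then a critical limit direction contradicting \eqref{eq:homogeneous-system-no-q}. Where you differ is in how the conic constraint is handled.

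The paper scalarizes the constraint. With $\cK=\{y:Ay\ge 0\}$ and $G=Ag$, it picks $\rho^k\ge 0$ with $A^\top\rho^k=\lambda^k$ and $\|\rho^k\|\le\gamma\|\lambda^k\|$ (conic Carath\'eodory). It then replaces $\rho^k$ by $\tilde\rho^k=\rho^k-\eta^k$, where $\eta^k=\min\{\rho^k,-G(x^k)\}=o(t_k)$. This gives exact complementarity and $\tilde\rho^k_i=0$ on inactive indices, at the price of a stationarity perturbation $\sigma^k$. The multiplier distance is controlled via \cite[Corollary 3.2.5]{facchinei2003finite} on a lifted polyhedron in $(v,\rho,s)$. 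Orthogonality comes from pairing with the limit direction $d^\infty$, using index-wise eventual complementarity.

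You stay with $\lambda$ and use the finitely many faces of $\cK^*$ and $\cX$. The fact that makes this work, and which you should state explicitly, is the following. If $\lambda_k\in\operatorname{ri}F$ for a fixed face $F$ of $\cK^*$, then $\cN_{\cK^*}(\lambda_k)=(\cK^*)^\circ\cap F^\perp$ is a fixed closed cone. The residual minimizers $z_k\in\cN_{\cK^*}(\lambda_k)$, with $\|z_k-g(x_k)\|\le r_k$, therefore converge to $g(\bar x)$ and force $\langle\lambda_k,g(\bar x)\rangle=0$ exactly.

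This is what yields $-(\nabla_x\Phi(\bar x,v_k,\lambda_k)+s_k)\in\cC^*$; note it is the negative of your quantity that lies in $\cC^*$. It replaces the paper's $\eta^k$-correction. Similarly, in Case A, $\langle\lambda_k,z_k\rangle=0$ gives $|\langle\lambda_k,g(\bar x)\rangle|\le C(\|x_k-\bar x\|+r_k)$. That is the residual you need to apply Hoffman's bound directly at $\bar x$.

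The orthogonality step needs one adjustment. With $(\bar v_k,\bar\lambda_k)$ the metric projection onto $\cM(\bar x)$, complementarity gives only the one-sided estimate $\langle\nabla_x\Phi(\bar x,v_k,\lambda_k)+s_k,\,x_k-\bar x\rangle\ge -o(\|x_k-\bar x\|^2)$. The leftover terms $-\langle\bar\lambda_k,z_k\rangle$ and $-\langle\bar s_k,x_k-\bar x\rangle$ are merely nonnegative, so you do not get the two-sided $o(\|x_k-\bar x\|^2)$ you claimed.

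The one-sided bound still suffices. It gives $\langle d,\nabla^2_{xx}\Phi\,d\rangle\le 0$, and $\ge 0$ already follows from $d\in\cC$ and $\nabla^2_{xx}\Phi\,d\in\cC^*$. Alternatively, pair with $d$ itself, as the paper does. On fixed faces, $(z_k-g(\bar x))/\|x_k-\bar x\|\in F^\perp$ gives $\nabla g(\bar x)d\in F^\perp$, and $\langle s_k,d\rangle=0$.

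Your route is shorter and avoids the representation constant $\gamma$ and the lifted set. The paper's reduction to a componentwise system lets it invoke \cite{facchinei2003finite} essentially verbatim and check each complementarity relation index by index.
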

\begin{proof}
    \sa{The proof is provided in {\Cref{sec:jongshi-proof}}. It extends the proof of \cite[Proposition 6.2.7]{facchinei2003finite} to handle polyhedral-conic constraints intersected with a polyhedral domain $\cX$, whereas \cite[Proposition 6.2.7]{facchinei2003finite} considers $\cK=\reals^m_+$ and $\cX=\reals^n$.}
\end{proof}

\sa{Consider \eqref{eq: qcqp-pro} as a special case of the problem class considered in \eqref{eq:js-general_opt}. Let $\mathcal{X}$ be a nonempty polyhedral set and $x^\star\in \cX_{\rm feas}=\Big\{x\in\cX:\ Ax=b, \; g_i(x)\le 0,\; i\in[m]\Big\}$ be an optimal solution to \eqref{eq: qcqp-pro}. Suppose that MFCQ holds\footnote{MFCQ at $x^\star$ relative to $\cX$ requires that (i) if $A^\top v + s = 0$ holds for some $v\in\reals^p$ and $s \in \cN_\cX(x^\star)$, then $v=0$ and $s = 0$, and that (ii) there exists $d\in \cT_{\cX}(x^\star)$ such that $Ad=0$ and $\big(Q_i x^\star+q_i\big)^\top d < 0$ for $i\in I(x^\star)$; see \cite[Eq.(2.190) and Eq.(2.196)]{bonnans2000perturbation}.} relative to $\cX$ at $x^\star$; hence, the tangent cone is given by
\begin{align*}
\cT_{\cX_{\rm feas}}(x^\star)=
\left\{
d \in \cT_{\cX}(x^\star):\ 
Ad=0,\quad
\big(Q_i x^\star+q_i\big)^\top d\le 0
\quad \forall i\in I(x^\star)
\right\},
\end{align*}
where $I(x^\star)=\{i\in[m]:\ g_i(x^\star)=0\}$ denotes the active set at $x^\star$. Therefore, the critical cone at $x^\star$, i.e., $\mathcal{C}(x^\star)\triangleq \mathcal{C}(x^\star;\mathcal{X}_{\rm feas},\nabla {f})$, can be written as follows:
\[
\mathcal{C}(x^\star)=
\left\{d \in \cT_{\cX}(x^\star):\ 
Ad=0,\quad \big(Q_0 x^\star+q_0\big)^\top d = 0,\quad 
\big(Q_i x^\star+q_i\big)^\top d\le 0
\quad \forall i\in I(x^\star)
\right\}.
\]
Moreover, given $(v,\lambda)\in\cM(x^\star)\subset\reals^p\times\reals^m_+$, let $Q(\lambda)\triangleq Q_0+{\sum_{i\in[m]}}\lambda_i Q_i$ --since $\lambda\in\reals^m_+$, we have $Q(\lambda)\in \mathbb{S}^n_+$. In the light of these observations, applying \Cref{thm:jongshi} to the convex problem \eqref{eq: qcqp-pro}, we obtain the following result.}

\sa{\begin{corollary}
\label{prop:jongshi-coro}
    Let $x^\star$ be an optimal solution to \eqref{eq: qcqp-pro}.
 Suppose that MFCQ {holds relative to $\cX$ at ${x}^\star$} and that for every $(v,\lambda)\in \mathcal{M}({x}^\star)$, the linear complementarity system,
	\begin{equation}
    \label{eq:qcqp-cond}
		d\in\mathcal{C}(x^\star),\quad Q(\lambda)d\in \mathcal{C}(x^\star)^*,\quad d^\top Q(\lambda)d=0,
	\end{equation}
	has a unique solution $d=0$. Then, there exist positive constants $c>0$ and $\varepsilon>0$ such that for all $z=(x,v,\lambda) \in \mathcal{X} \times \mathbb{R}^p \times \mathbb{R}^m_+$ satisfying $\|x-{x}^\star\|+\operatorname{dist}\bigl((v,\lambda),\mathcal{M}({x}^\star)\bigr)\le \varepsilon$,
	it holds that 
	\begin{align*}
	{\rm dist}(z, \mathcal{Z}^\star) \le c \cdot {\rm dist}(0, \mathcal{F}(z)),
    \end{align*}
    where $\cF(\cdot)$ denotes the set-valued map defined in \Cref{assu: metric-subreg} associated with \eqref{eq:minmax_rest} for $B=\infty$, in which case $\hat\cL(\cdot)=\cL(\cdot)$, corresponding to \eqref{eq:minimax-pro} formulation of \eqref{eq: qcqp-pro}.
    \end{corollary}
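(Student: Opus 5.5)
We apply \Cref{thm:jongshi} to \eqref{eq: qcqp-pro} with $h(x)=Ax-b$, $\cK=\reals^m_+$ (so $\cK^*=\reals^m_+$), polyhedral $\cX$ and $\bar x=x^\star$. We then translate its error bound \eqref{eq:error-bound-conclusion} into the metric subregularity inequality for $\cF$.

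\textbf{Verifying the hypotheses.} Since $x^\star$ is optimal and $\cZ^\star\neq\emptyset$, \cref{lem:product-saddle-set} shows that $x^\star$ is a KKT point. The functions $f,g,h$ are quadratic or affine, hence $C^2$. Here $\nabla h=A$ and $\cT_{-\reals^m_+}(g(x^\star))=\{w:\ w_i\le 0,\ i\in I(x^\star)\}$. With these identifications, the relative MFCQ footnote is exactly (i)--(ii) of \Cref{thm:jongshi}, and \eqref{eq:critical-cone-def} reduces to the displayed $\cC(x^\star)$. Moreover $\nabla^2_{xx}\Phi(x^\star,v,\lambda)=Q(\lambda)$, so \eqref{eq:homogeneous-system-no-q} is precisely \eqref{eq:qcqp-cond}, where orthogonality means $d^\top Q(\lambda)d=0$. \Cref{thm:jongshi} therefore gives $c_0,\varepsilon>0$ such that
\[
\|x-x^\star\|+\dist((v,\lambda),\cM(x^\star))\le c_0\, r(x,v,\lambda)
\]
on the stated neighborhood.

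\textbf{Identifying the left side with distance to $\cZ^\star$.} For the convex problem, a multiplier at any optimal point is a dual optimal solution. By \cref{lem:product-saddle-set}, $\cM(x^\star)=\cY^\star$ (KKT pairs coincide with saddle points, and $\cZ^\star=\cX^\star\times\cY^\star$). Hence $(x^\star,\Pi_{\cY^\star}(v,\lambda))\in\cZ^\star$, and
\[
\dist(z,\cZ^\star)\le \|x-x^\star\|+\dist((v,\lambda),\cM(x^\star)).
\]
The point of this step is that $\cM(x^\star)$ does not depend on which optimal $x^\star$ is chosen, so the bound is measured against all of $\cZ^\star$ rather than just a fiber of it. This is also where convexity is used.

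\textbf{Bounding the residual by $\dist(0,\cF(z))$.} With $B=\infty$ we have $\partial_x\cL(z)=\nabla_x\Phi(z)+\cN_\cX(x)$. Also $\partial_y(-\cL)(z)=\bigl(-(Ax-b),\ -g(x)+\cN_{\reals^m_+}(\lambda)\bigr)$, using $\cN_{\cK^*}(\lambda)$ for $\lambda\in\cK^*$. It follows that $\dist(0,\cF(z))$ is the Euclidean norm of the triple
\[
\Bigl(\dist(0,\nabla_x\Phi+\cN_\cX(x)),\ \|Ax-b\|,\ \dist(0,-g(x)+\cN_{\cK^*}(\lambda))\Bigr),
\]
while $r$ is its $\ell_1$ norm. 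Therefore $r\le\sqrt3\,\dist(0,\cF(z))$, and chaining the three inequalities gives the result with $c=\sqrt3\,c_0$.

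\textbf{Main obstacle.} The main obstacle is confirming that $\cM(x^\star)=\cY^\star$ exactly, and that the sign conventions line up: the normal cone of $\cK^*$ in $r$ must match $\partial_y(-\cL)$. The earlier lemmas settle both points.
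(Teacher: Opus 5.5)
Your proposal is correct and follows the paper's route. The paper simply invokes \Cref{thm:jongshi} after identifying the tangent cone, the critical cone $\cC(x^\star)$ and $\nabla^2_{xx}\Phi(x^\star,v,\lambda)=Q(\lambda)$; your extra steps make explicit the translation the paper leaves implicit, namely $\cM(x^\star)=\cY^\star$ via \cref{lem:product-saddle-set}, and $r\le\sqrt{3}\,\dist(0,\cF(z))$ from the product structure of $\cF$. One small quibble: MFCQ (ii) in the footnote only involves the active constraints, so it matches \eqref{eq:relative-mfcq-direction} only after scaling $d$ down so that the inactive constraints $g_i(x^\star)+\nabla g_i(x^\star)^\top d<0$ also hold, rather than ``exactly''.
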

    Next, we discuss a sufficient condition to ensure \eqref{eq:qcqp-cond} has a trivial solution $d=0$.
    \begin{remark}
    Let $x^\star$ be an optimal solution to \eqref{eq: qcqp-pro}. If $d^\top Q(\lambda)d>0$ for all $d\in \cC(x^\star)\setminus\{0\}$ and $(v,\lambda)\in\cM(x^\star)$, then \eqref{eq:qcqp-cond} has a trivial solution $d=0$. Note that even the sufficient condition discussed in this remark requires neither strict complementarity nor LICQ. 
    \end{remark}}
    
\begin{remark}
     Under the Slater's condition in \Cref{assu: slater}, from \Cref{lem:dual_sol_bounded}, there exists $\bar{B}>0$ such that $\| y^\star \| \le \bar{B}$ for all $y^\star \in \cY^\star$. Let $\cB = \{ y: \| y\| \le B\}$ for some $B > \bar{B}$; hence, $\cY^\star \subset {\rm int}(\cB)$ and $\cY^\star \cap {\rm int}(\cB) = \cY^\star$. As a result, under the premise of \Cref{thm:jongshi}, the metric subregularity in \cref{assu: metric-subreg} still holds even if the set $\cB$ is defined using a finite $B>0$ as given above. Therefore, under \Cref{assu: slater}, \emph{\texttt{APDB-yx}} applied to \eqref{eq:minmax_rest} can be shown to enjoy faster convergence guarantees whenever the conditions in \Cref{thm:jongshi} hold.
\end{remark}
 
Under \cref{assu: metric-subreg}, the metric subregularity holds on any compact \sa{subset of {$\hat{\cZ}$}.}
\begin{lemma}\label{lem: msr-compact}
\sa{Suppose that \cref{as:sets,as:Lipschitz-grads} hold.} Under Assumption \ref{assu: metric-subreg}, for each compact set $\bar{\mathcal{Z}} \subseteq \hat{\cZ}$ with $\bar{\mathcal{Z}} \cap \sa{\hat\cZ^\star} \neq \emptyset$, there exists a positive constant \sa{$\rho(\bar\cZ)$}, which may depend on $\bar{\mathcal{Z}}$, such that 
\begin{align}\label{eq: metric-subreg-local}
{\rm dist}(z, \sa{\hat\cZ^\star}) \le \sa{\rho(\bar\cZ)} \cdot{\rm dist}(0, \mathcal{F}(z)), \quad \forall z \in \bar{\mathcal{Z}}.
\end{align}
\end{lemma}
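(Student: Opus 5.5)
The plan is the standard compactness argument that turns local metric subregularity into a uniform bound on a compact set, using a split into "near" and "far" regions. First, I cover the compact set $\hat\cZ^\star\cap\bar\cZ$ by the open balls $\cB_{\varepsilon_{z^\star}/2}(z^\star)$ from \cref{assu: metric-subreg}. This set is compact because $\hat\cZ^\star$ is closed: it is the intersection of $\cZ^\star$ with the closed set $\hat\cZ$ (\cref{lem:zstar-hatzstar}), and $\cZ^\star=\cX^\star\times\cY^\star$ is closed. I then extract a finite subcover indexed by $z_1^\star,\dots,z_N^\star$ and set $\rho_1\triangleq\max_i\rho_{z_i^\star}$. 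Let $U\triangleq\bigcup_i\cB_{\varepsilon_{z_i^\star}/2}(z_i^\star)$. For $z\in\bar\cZ\cap U$, the point lies in some ball $\cB_{\varepsilon_{z_i^\star}}(z_i^\star)$, so \eqref{eq: metric-subreg} gives ${\rm dist}(z,\hat\cZ^\star)\le\rho_1\,{\rm dist}(0,\cF(z))$.

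Next, I handle the remaining set $\bar\cZ\setminus U$, which is compact. On this set, ${\rm dist}(z,\hat\cZ^\star)$ is bounded above by some $D<\infty$, since $\bar\cZ$ is bounded and $\hat\cZ^\star\cap\bar\cZ\neq\emptyset$. The key claim is that $\inf\{{\rm dist}(0,\cF(z)):\ z\in\bar\cZ\setminus U\}\triangleq\omega>0$. Given the claim, I take $\rho(\bar\cZ)=\max\{\rho_1,D/\omega\}$; if $\bar\cZ\setminus U=\emptyset$, then $\rho_1$ alone suffices.

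To prove the claim, I argue by contradiction. Suppose $z_k\in\bar\cZ\setminus U$ and $w_k\in\cF(z_k)$ with $w_k\to0$. By compactness, $z_k\to\bar z\in\bar\cZ\setminus U$. The map $\cF$ is $(\nabla_x\Phi,-\nabla_y\Phi)$ plus the normal cones $\cN_\cX\times\cN_{\hat\cY}$. This has closed graph: $\nabla\Phi$ is continuous by \cref{as:Lipschitz-grads}, and normal cones of closed convex sets have closed graphs. Hence $0\in\cF(\bar z)$. For the convex-concave problem \eqref{eq:minmax_rest}, $0\in\cF(\bar z)$ means $\bar x$ minimizes $\hat\cL(\cdot,\bar y)$ and $\bar y$ maximizes $\hat\cL(\bar x,\cdot)$, since $\Phi(\cdot,y)$ is convex for $y\in\hat\cY$ and affine in $y$. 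Therefore $\bar z\in\hat\cZ^\star\cap\bar\cZ\subseteq U$, a contradiction.

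The main obstacle is this closed-graph and first-order-optimality step. It requires identifying $\partial_x\hat\cL$ and $\partial_y(-\hat\cL)$ with the sum of a gradient and a normal cone, which holds because $\Phi$ is differentiable on an open set containing $\cX$. It also requires checking that convexity of $x\mapsto\langle\lambda,g(x)\rangle$ for $\lambda\in\cK^*$ makes stationarity sufficient for a saddle point.
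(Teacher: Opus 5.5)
Your proposal is correct and follows essentially the same route as the paper. Both arguments take a finite subcover of $\bar\cZ\cap\hat\cZ^\star$ by the metric-subregularity balls, then obtain a positive lower bound on ${\rm dist}(0,\cF(\cdot))$ on the compact remainder from the closed graph of $\cF$; the paper phrases this via lower semicontinuity of ${\rm dist}(0,\cF(\cdot))$ and you via a direct sequential contradiction, and your half-radius balls are harmless but unnecessary. One small point: closedness of $\hat\cZ^\star$ should not be routed through \cref{lem:zstar-hatzstar}, which uses assumptions not in this lemma's hypotheses; it follows directly because $\hat\cZ^\star=\cF^{-1}(0)$ (by your own convexity argument) and $\cF$ has closed graph.
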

\begin{proof}
Recall that
$\mathcal{F}:\hat{\cZ}\rightrightarrows \mathbb{R}^{n+p+m}$ is given by $\mathcal{F}(z)= G(z)+\mathcal{N}_{\hat{\cZ}}(z)$ for $z=(x,v,\lambda)\in\hat{\cZ}$,
where
\begin{align}\label{eq: proofmr_G}
{G}(z)\triangleq
\begin{bmatrix}
\nabla f(x) + A^\top v + \sum_{i=1}^m \lambda_i \nabla g_i(x)\\
b-Ax\\ -g(x)\end{bmatrix},
\qquad
\mathcal{N}_{\hat{\cZ}}(z)\triangleq
\begin{bmatrix}
\mathcal{N}_{\mathcal{X}}(x)\\
\mathcal{N}_{\hat{\cY}}(y)
\end{bmatrix}.   
\end{align}
\paragraph{Closed graph of $\mathcal{F}$.}
Since $\hat{\cZ}$ is a closed convex set, $\mathcal{N}_{\hat{\cZ}}(\cdot)$ has a closed graph \cite[Exercise 12.8(b)]{rockafellar2009variational}, i.e.,
given $\{\jw{z^k}\}\subseteq \hat{\cZ}$ and $\{\jw{\omega^k}\}$ such that $\omega^k\in \mathcal{N}_{\hat{\cZ}}(z^k)$ for $k\geq 0$, if $z^k\to \bar z$ and $\omega^k\to \bar \omega$ as $k\to\infty$, then $\bar\omega\in \mathcal{N}_{\hat{\cZ}}(\bar z)$.
Note that ${G}(\cdot)$ is continuous on $\hat{\cZ}$. Consider $\{z^k\}\subseteq \hat{\cZ}$ and $\{v^k\}$ such that $v^k\in \mathcal{F}(z^k)$ for $k\geq 0$, in addition to $z^k\to \bar z$ and $v^k\to \bar v$ as $k\to\infty$.
Since $v^k\in \mathcal{F}(z^k)$, there exists $\omega^k\in N_{\hat{\cZ}}(z^k)$ such that
$v^k={G}(z^k)+\omega^k$ for all $k\geq 0$; hence,
\[
\lim_{k\to\infty}\omega^k
=\lim_{k\to\infty} 
v^k-{G}(z^k)
=\bar v-{G}(\bar z).
\]
The map $\mathcal{N}_{\hat{\cZ}}(\cdot)$ having a closed-graph implies that $\bar v-{G}(\bar z)\in \mathcal{N}_{\hat{\cZ}}(\bar z)$; thus, $\bar v\in {G}(\bar z)+\mathcal{N}_{\hat{\cZ}}(\bar z)=\mathcal{F}(\bar z)$. This shows that $\mathcal{F}(\cdot)$ also has a closed graph.

\paragraph{Lower semicontinuity of $\mathrm{dist}(0,\mathcal{F}(\cdot))$.} See \cite[Theorem 5.7(a) and Theorem 5.11(a)]{rockafellar2009variational}. We include the derivation for completeness. Define $d: \hat{\cZ}\rightarrow \mathbb{R}_+$ with $d(z)\triangleq \mathrm{dist}(0,\mathcal{F}(z))=\inf_{v\in \mathcal{F}(z)}\|v\|$. Let $\{z^k\}\subseteq \hat\cZ$ such that for some $\bar z$, $z^k\to \bar z$ as $k\to\infty$, and define $\bar d\triangleq \liminf_{k\to\infty} d(z^k)$.
Pick a subsequence $\{z^{k_j}\}$ such that $d(z^{k_j})\to \bar d$.
For each $j$, choose $v^{k_j}\in \mathcal{F}(z^{k_j})$ satisfying $\|v^{k_j}\|\le d(z^{k_j})+{1}/{k_j}$. Since $\{v^{k_j}\}$ is bounded, (passing to a further subsequence if needed) we may assume
$v^{k_j}\to \bar v$. \sa{Thus, we get $\norm{\bar v}\leq \liminf_{j\to\infty} d(z^{k_j})+{1}/{k_j}=\bar d$.}
Since $\mathcal{F}$ has a closed graph and $z^{k_j}\to \bar z$ with $v^{k_j}\in \mathcal{F}(z^{k_j})$,
we obtain $\bar v\in \mathcal{F}(\bar z)$.
Consequently,
\[
d(\bar z)=\inf_{v\in \mathcal{F}(\bar z)}\|v\|\le \|\bar v\|
\le 
\bar d,
\]
which shows that $d(\cdot)$ is lower semicontinuous on $\hat{\cZ}$.

Now consider a compact set $\bar{\mathcal{Z}}\subseteq \hat{\cZ}$ such that
$\bar{\mathcal{Z}}\cap \sa{\hat\cZ^\star}\neq \emptyset$.
Since $\sa{\hat\cZ^\star}$ is closed, $\bar{\mathcal{Z}}\cap \sa{\hat\cZ^\star}$ is compact as well.
Consider the collection of open sets defined in \cref{assu: metric-subreg}, i.e., $\{\mathcal{B}_{\varepsilon_{z^\star}}(z^\star)\}_{z^\star\in \bar{\mathcal{Z}}\cap \sa{\hat\cZ^\star}}$,
which forms an open cover of $\bar{\mathcal{Z}}\cap \sa{\hat\cZ^\star}$. Therefore, the compactness of $\bar{\mathcal{Z}}\cap \sa{\hat\cZ^\star}$ implies that there exists a finite subcover $\{\mathcal{B}_{\varepsilon_\ell}(z_\ell^\star)\}_{\ell=1}^M$ for some integer $M$ such that $\varepsilon_\ell\triangleq \varepsilon_{z_\ell^\star}$ for $\ell\in[M]$, \sa{i.e., $\{z_\ell^\star\}_{\ell\in[M]}\subset\bar\cZ\cap\hat\cZ^\star$ and $\bigcup_{\ell\in[M]}\mathcal{B}_{\varepsilon_{\ell}}(z_\ell^\star)\supset\bar\cZ\cap\hat\cZ^\star$.}
Let
\[
\widetilde{\mathcal{B}}\triangleq \bigcup_{\ell=1}^M \mathcal{B}_{\varepsilon_{\ell}}\left(z_\ell^\star \right),
\qquad
\rho_1\triangleq \max_{\ell\in [M]}\rho_{z_\ell^\star}.
\]
Then, it follows from \cref{assu: metric-subreg} that
\begin{align}
\mathrm{dist}(z,\sa{\hat\cZ^\star})\le \rho_1\cdot \mathrm{dist}\bigl(0,\mathcal{F}(z)\bigr),
\qquad \forall\, z\in \bar{\mathcal{Z}}\cap \widetilde{\mathcal{B}}.
\label{eq: proof-lemma1-1}
\end{align}

Since $\bar{\mathcal{Z}}$ is compact and $\widetilde{\mathcal{B}}$ is open, $\bar{\mathcal{Z}}\setminus \widetilde{\mathcal{B}}$ is compact and $\sa{\hat\cZ^\star}\cap(\bar{\mathcal{Z}}\setminus \widetilde{\mathcal{B}})= \sa{\hat\cZ^\star}\cap \bar{\mathcal{Z}} \cap \widetilde{\mathcal{B}}^{c}= \emptyset$.
Note that $d(z)=0$ \sa{for some $z\in\hat\cZ$} if and only if $z\in \sa{\hat\cZ^\star}$; hence, $d(z)>0$ for all $z\in \bar{\mathcal{Z}}\setminus \widetilde{\mathcal{B}}$. Now, let
\begin{align}\label{eq: define-d0}
d_0\triangleq \inf\{d(z):\ z\in \bar{\mathcal{Z}}\setminus \widetilde{\mathcal{B}}\}.
\end{align}
\sa{Next, we show that $d_0>0$. For the sake of contradiction, suppose that $d_0=0$.} Since $d(\cdot)$ is lower semicontinuous and
$\bar{\mathcal{Z}}\setminus \widetilde{\mathcal{B}}$ is compact, \sa{$d_0=0$ implies that} there exists a point $\tilde z\in \bar{\mathcal{Z}}\setminus \widetilde{\mathcal{B}}$
such that $d(\tilde z)=0$; however, \sa{$\tilde z\in\hat\cZ$ and $d(\tilde z)=0$ together imply that} $\tilde z\in \sa{\hat\cZ^\star}$, which is a contradiction.

\sa{Having established that $d_0>0$, let $\rho_2\triangleq \mathrm{diam}(\bar{\mathcal{Z}})/d_0$, and we can conclude that}
\begin{align}
\mathrm{dist}(z,\sa{\hat\cZ^\star})\le \mathrm{diam}(\bar{\mathcal{Z}})
\le \frac{\mathrm{diam}(\bar{\mathcal{Z}})}{d_0}\, d(z)
=\rho_2\,\mathrm{dist}\bigl(0,\mathcal{F}(z)\bigr),\quad\forall~z\in \bar{\mathcal{Z}}\setminus \widetilde{\mathcal{B}},
\label{eq: proof-lemma1-2}
\end{align}
where the first inequality is due to $\bar{\mathcal{Z}}\cap \sa{\hat\cZ^\star}\neq \emptyset$, and the second inequality 
\sa{follows} from \eqref{eq: define-d0}. Finally, 
\sa{setting} $\sa{\rho(\bar\cZ)} \triangleq \max\{\rho_1,\rho_2\}$, we conclude from \eqref{eq: proof-lemma1-1} and \eqref{eq: proof-lemma1-2} that \sa{the desired inequality in \eqref{eq: metric-subreg-local} holds.}
\end{proof}

In the following, we show that under the metric subregularity, the self-centered smoothed duality gap possesses the quadratic growth property. Such a regularity condition plays a significant role in establishing strong convergence results for various iterative methods, see, e.g., \cite{rebjock2025fast,anitescu2000degenerate} for smooth optimization and \cite{drusvyatskiy2018error,liao2024error,davis2025local} for nonsmooth optimization.
\begin{remark}
    The quadratic growth result stated in \cref{thm: qg} applies to the self-centered smoothed duality gap $\mathcal{G}_{\xi}(\cdot)$. 
    On the other hand, the existing work~\cite{fercoq2023quadratic,lu2026practical,huang2025restarted} for tackling LPs and QPs, which are special cases of the problem class \eqref{eq:conic-problem} we consider in this paper, 
    \sa{rely on the quadratic growth of $\mathcal{G}_{\xi}(\cdot;z^\star)$ for $z^\star\in\cZ^\star$ to show their} linear convergence results, e.g., see \cite[Assumption 22]{fercoq2023quadratic}, \cite[Theorem 2]{lu2026practical}, and \cite[Theorem 1]{huang2025restarted}.
\end{remark}
\begin{lemma}[Quadratic growth of smoothed duality gap] \label{thm: qg}
Under \Cref{as:sets,as:Lipschitz-grads,assu: metric-subreg}, for each compact set $\bar{\mathcal{Z}} \subseteq \hat{\cZ}$ with $\bar{\mathcal{Z}} \cap \sa{\hat\cZ^\star} \neq \emptyset$, there exists a positive constant $\sa{\bar{\rho}(\bar\cZ)}$, which may depend on $\bar{\mathcal{Z}}$, such that $\mathcal{G}_\xi(\bar{z}) = \mathcal{G}_{\xi}(\bar{z};\bar{z})$ satisfies
\begin{align}\label{eq: QG-local}
{\rm dist}^2(\bar{z}, \sa{\hat\cZ^\star}) \le \sa{\bar{\rho}(\bar\cZ)} \cdot \mathcal{G}_\xi(\bar{z}), \quad \forall \bar{z} \in \bar{\mathcal{Z}}.
\end{align}
\end{lemma}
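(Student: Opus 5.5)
The idea is to show that the self-centered smoothed gap dominates the squared distance from $\bar z$ to its own "regularized best response", and that this best-response map yields a residual controlling $\dist(0,\cF(\cdot))$. Then \cref{lem: msr-compact} converts the residual into distance to $\hat\cZ^\star$.

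\textbf{Step 1 (maximizer and lower bound).} Fix $\bar z=(\bar x,\bar y)\in\bar\cZ$. The function $z'\mapsto \mathcal Q(\bar z,z')-2\xi\mathbf D(z',\bar z)$ is $2\xi$-strongly concave on $\hat\cZ$. This holds because $\hat\cL(\bar x,\cdot)$ is concave, $-\hat\cL(\cdot,\bar y)$ is concave, and $\mathbf D(\cdot,\bar z)$ is $1$-strongly convex. Hence it has a unique maximizer $z^+=(x^+,y^+)\in\hat\cZ$, which is exactly the pair solving \eqref{eq:ada-compute-x}--\eqref{eq:ada-compute-y}. Plugging in the competitor $z'=\bar z$ and using strong concavity around the maximizer gives
\[
\mathcal G_\xi(\bar z)\ge \mathcal Q(\bar z,\bar z)-0+\xi\|\bar z-z^+\|^2=\xi\|\bar z-z^+\|^2 .
\]

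\textbf{Step 2 (residual at $z^+$).} The optimality conditions of the two decoupled subproblems read
\[
0\in \nabla_x\Phi(x^+,\bar y)+2\xi(\nabla\varphi_p(x^+)-\nabla\varphi_p(\bar x))+\cN_\cX(x^+)
\]
and
\[
0\in -\nabla_y\Phi(\bar x,y^+)+2\xi(\nabla\varphi_d(y^+)-\nabla\varphi_d(\bar y))+\cN_{\hat\cY}(y^+).
\]
Swap $\bar y\to y^+$ in the $x$-gradient and $\bar x\to x^+$ in the $y$-gradient to get an element of $\cF(z^+)$ of norm at most
\[
\big(2\xi L_\varphi+L_{xy}'+L_{yx}'\big)\|z^+-\bar z\| .
\]
Here $L'$ are Lipschitz constants of $\nabla_x\Phi(x,\cdot)$ and $\nabla_y\Phi(\cdot,y)=(Ax-b,g(x))$ on the relevant compact sets. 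These are finite: $\nabla_y\Phi$ is Lipschitz in $x$ with constant $\|A\|+C_g$. The coupling $\nabla_x\Phi$ is affine in $y$ with constant $\|A\|+B_g$, uniformly for $x\in\cX$, so no boundedness of $y$ is needed for the cross terms. Thus
\[
\dist(0,\cF(z^+))\le C\|z^+-\bar z\| .
\]

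\textbf{Step 3 (compact enlargement and conclusion).} To apply \cref{lem: msr-compact}, I need $z^+$ in a fixed compact set $\bar\cZ'\supseteq\bar\cZ$ meeting $\hat\cZ^\star$. The $x$-part lies in the compact $\cX$. For the $y$-part, Step 1 combined with an upper bound on $\mathcal G_\xi$ over $\bar\cZ$ gives $\|y^+-\bar y\|^2\le \mathcal G_\xi(\bar z)/\xi$. That upper bound comes from continuity: $\mathcal G_\xi$ is a sup of a strongly concave function with data continuous in $\bar z$, so it is finite and bounded on compact $\bar\cZ$. Alternatively, compare with $z^\star\in\bar\cZ\cap\hat\cZ^\star$ through the nonexpansive-type stability of the regularized saddle map. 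So $z^+\in\bar\cZ'\triangleq\{z\in\hat\cZ:\dist(z,\bar\cZ)\le R\}$, which is compact, and $\rho'=\rho(\bar\cZ')$. Then
\[
\dist(\bar z,\hat\cZ^\star)\le\|\bar z-z^+\|+\dist(z^+,\hat\cZ^\star)\le (1+\rho' C)\|\bar z-z^+\| ,
\]
and squaring with Step 1 gives \eqref{eq: QG-local} with $\bar\rho(\bar\cZ)=(1+\rho' C)^2/\xi$.

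\textbf{Main obstacle.} I expect Step 3 to be the main obstacle: establishing a uniform bound on $y^+$ (equivalently on $\mathcal G_\xi$) over $\bar\cZ$ when $B=+\infty$. I would first try the direct estimate. Taking $x'=\bar x$ in the sup shows the $y$-problem is a projection-type problem with bounded linear term $(A\bar x-b,g(\bar x))$, so $\|y^+-\bar y\|\le \|\nabla_y\Phi(\bar x,\cdot)\|/(2\xi)\le \max_\cX\|(Ax-b,g(x))\|/(2\xi)$, which is explicit and uniform.
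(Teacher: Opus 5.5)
Your proposal is correct and follows the paper's proof step for step. It uses the same $2\xi$-strongly concave regularized best response $T(\bar z)$, the same lower bound $\mathcal{G}_\xi(\bar z)\ge\xi\|\bar z-T(\bar z)\|^2$, and the same residual estimate $\dist(0,\cF(T(\bar z)))\le C\|T(\bar z)-\bar z\|$ obtained by swapping $\bar y\to y^+$ and $\bar x\to x^+$ in the optimality conditions, followed by \Cref{lem: msr-compact} and the triangle inequality. The only difference is how you confine $T(\bar z)$ to a fixed compact set: the paper invokes Berge's maximum theorem for continuity of $T$ and uses $\bar\cZ\cup T(\bar\cZ)$, whereas your explicit bound $\|y^+-\bar y\|\le \max_{x\in\cX}\|(Ax-b,g(x))\|/(2\xi)$ gives a uniform radius directly. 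That bound holds for any Bregman $\mathbf{D}_d$ by comparing with the feasible point $\bar y$, and it sidesteps the compact-valuedness hypothesis of Berge's theorem when $B=+\infty$ and $\hat\cY$ is unbounded, so rely on it rather than on your vaguer continuity or nonexpansiveness remarks.
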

\begin{proof}
\jw{Fix} $\xi >0$ and $\bar{z} \in \bar{\mathcal{Z}}$, let $\phi_{\bar{z}}: \hat{\cZ} \rightarrow \mathbb{R}$ be defined as
\begin{align*}
\phi_{\bar{z}}(z) = \sa{\hat\cL}(\bar{x},y)-\sa{\hat\cL}(x,\bar{y}) - 2\xi \textbf{D}(z,\bar z),	
\end{align*}
where \sa{$\hat\cL(x,y)\triangleq\iota_{\mathcal{X}}(x) + 
\Phi(x,y) - \iota_{\hat{\mathcal{Y}}}(y)$ as defined in \cref{assu: metric-subreg} and the Bregman distance $\mathbf{D}(\cdot,\cdot)$ satisfies 
\Cref{def:bregman}; hence,} $\phi_{\bar{z}}$ is $2\xi$-strongly concave. We define $T: 
\sa{\bar\cZ} \rightarrow \hat{\cZ}$ such that 
\begin{align}
\label{eq:Tmap}
    \sa{T(\bar{z}) = \mathop{\arg \max}_{z} \phi_{\bar{z}}(z)
    = \mathop{\arg \max}_{z \in {\hat{\cZ}}} \{\Phi(\bar x,y)-\Phi(x,\bar y)- 2\xi \textbf{D}(z,\bar z)\},\quad\forall~\bar z\in\bar\cZ.}
\end{align}
For any \sa{$z=(x,y)\in\hat\cZ$ and $z'=(x',y')\in \hat\cZ$}, the three-point identity holds: 
\begin{align*}
\textbf{D}_p(x,\bar x)
&=\textbf{D}_p(x,x')+
\textbf{D}_p(x',\bar x)+
\langle
\nabla\varphi_p(x')-\nabla\varphi_p(\bar x),\,x-x'
\rangle, \\
\textbf{D}_d(y,\bar y)
&=\textbf{D}_d(y,y')+
\textbf{D}_d(y',\bar y)+
\langle
\nabla\varphi_d(y')-\nabla\varphi_d(\bar y),\,y-y'
\rangle.
\end{align*}
\jw{Summing the two identities yields} that
\begin{align}\label{eq:qg-1Dpd}
-2\xi \textbf{D}(z,\bar z)
&=-2\xi \textbf{D}(z',\bar z)-2\xi
\left\langle
\begin{bmatrix}
\nabla\varphi_p(x')-\nabla\varphi_p(\bar x)\\
\nabla\varphi_d(y')-\nabla\varphi_d(\bar y)
\end{bmatrix},~
z-z'\right\rangle
-2\xi \textbf{D}(z,z').
\end{align}
\sa{Since $\bar z\in\bar\cZ \jw{\subseteq}\hat\cZ$,
for any $z\in\hat\cZ$, we have $\hat\cL(\bar x,y)-\hat\cL(x,\bar y)=\Phi(\bar x,y)-\Phi(x,\bar y)$; therefore,} 
\begin{align}\label{eq:qg-1L}
\sa{\hat\cL}(\bar x,y)-\sa{\hat\cL}(x,\bar y)\le
\sa{\hat\cL}(\bar x,y')-\sa{\hat\cL}(x',\bar y)+
\left\langle \begin{bmatrix}
-\nabla_x \Phi(x', \sa{\bar y})\\ 
\phantom{-}\nabla_y \Phi(
\sa{\bar x},y')
\end{bmatrix},\,z-z'\right\rangle,\quad\sa{\forall~z,z'\in\hat\cZ.}
\end{align}
Combining \eqref{eq:qg-1Dpd} and \eqref{eq:qg-1L}, we obtain
\[
\phi_{\bar z}(z)\le
\phi_{\bar z}(z')
-2\xi \textbf{D}(z,z')+
\left\langle \begin{bmatrix}
-\nabla_x \Phi(x',\sa{\bar y})\\
\phantom{-}\nabla_y \Phi(\sa{\bar x},y')
\end{bmatrix}-2\xi \begin{bmatrix}
\nabla\varphi_p(x')-\nabla\varphi_p(\bar x)\\
\nabla\varphi_d(y')-\nabla\varphi_d(\bar y)
\end{bmatrix},\;z-z'\right\rangle,\quad\forall~z,z'\in \hat\cZ.
\]
\sa{For $z'=T(\bar z)$, the optimality condition for \eqref{eq:Tmap} implies that 
the inner product in the above inequality is non-positive for all $z\in\hat\cZ$; therefore,}
we obtain
\[
\phi_{\bar z}(z)
\le
\phi_{\bar z}(T(\bar z))
- 2\xi \textbf{D}(z,T(\bar z)),
\quad \forall z\in \hat\cZ.
\]  
Observe that
\begin{align}\label{eq: qg-1}
\mathcal{G}_{\xi}(\bar{z}) = \sa{\max_{z} 
\phi_{\bar{z}}(z)=\phi_{\bar z}(T(\bar z)),}
\end{align}
and $\phi_{\bar{z}}(\bar z) = 0$ \sa{by definition of $\phi_{\bar z}(\cdot)$.} 
As a result, we have
\begin{align}\label{eq: qg-quad}
\mathcal{G}_\xi(\bar{z}) = \phi_{\bar{z}}(T(\bar{z}))\ge \phi_{\bar{z}}(\bar z) + 2\xi \textbf{D}(\bar{z}, T(\bar{z})) = 2\xi \textbf{D}(\bar{z}, T(\bar{z})).
\end{align}
In the rest of the proof, let $\sa{\bar z^*} \triangleq T(\bar{z})$ \sa{with $\bar z^*=(\bar x^*,\bar y^*)$}. Then, by optimality conditions of \eqref{eq: qg-1},
\begin{subequations}
\begin{align}
2\xi\big( \nabla \varphi_p(\bar{x})- \nabla \varphi_p(\sa{\bar x^*})\big)
&\in \partial_x \sa{\hat\cL}\big(\sa{\bar x^*},\bar{y} \big) =  \nabla f(\sa{\bar x^*}) + A^\top \bar{v}+ \sa{\nabla g(\sa{\bar x^*})^\top\bar{\lambda}} +  \mathcal{N}_{\mathcal{X}}(\sa{\bar x^*}), \label{eq: 2a}\\
2\xi\left(\nabla \varphi_d(\bar{y}) - \nabla \varphi_d(\sa{\bar y^*})\right) &\in -
\begin{bmatrix}
    A\bar{x}-b\\
    g(\bar{x})
\end{bmatrix} + \mathcal{N}_{\hat\cY}(\sa{\bar y^*}). \label{eq: 2b}
\end{align}
\end{subequations}
Let \sa{$\Delta x \triangleq \bar x^* - \bar{x}$ and $\Delta y \triangleq \bar y^* - \bar{y}$.}
It follows from \eqref{eq: 2a} that there exists a vector $
\sa{d_x}\in \mathcal{N}_{\mathcal{X}}(\bar x^*)$ such that $\nabla f(\bar x^*) +
\begin{bmatrix}
    A^\top & \nabla g(\bar x^*)^\top
\end{bmatrix} \bar{y} + \sa{d_x} = 2\xi \big( \nabla \varphi_p(\bar{x})- \nabla \varphi_p(\bar x^*)\big) $. Consequently, we have
\begin{align*}
\nabla f(\bar x^*) + 
\begin{bmatrix}
    A^\top & \nabla g(\bar x^*)^\top
\end{bmatrix}
\bar y^* + d_x =  2\xi \big( \nabla \varphi_p(\bar{x})- \nabla \varphi_p(\bar x^*)\big) + \begin{bmatrix}
    A^\top & \nabla g(\bar x^*)^\top
\end{bmatrix} \Delta y.
\end{align*}
\sa{Since $\nabla f(\bar x^*) + 
\begin{bmatrix}
    A^\top & \nabla^\top g(\bar x^*)
\end{bmatrix}
\bar y^* + d_x\in \partial_x \hat\cL(\bar x^*,\bar y^*)$ and $\grad \varphi_p$ is $L_\varphi$-Lipschitz,} we have
\begin{align}
{\rm dist}\big(0, \partial_x \hat\cL(\bar x^*,\bar y^*)\big) 
& \le 
\sa{2\xi L_\varphi\cdot \| \Delta x\|} + \sqrt{\| A\|^2 + B_g^2} \cdot \| \Delta y\| \sa{\triangleq R_x(\bar z^*)}, \label{eq: 3b}
\end{align}
where the constant $B_g$ is defined in \eqref{eq:def_B_g}. Similarly, it follows from \eqref{eq: 2b} that there exists a vector \sa{$d_y \in \mathcal{N}_{\hat\cY}(\bar y^*)$} such that
$2\xi\big( \nabla \varphi_{d}(\bar{y})- \nabla \varphi_{d}(\bar y^*)\big) = -
\begin{bmatrix}
    A\bar{x}-b\\
    g(\bar{x})
\end{bmatrix} + \sa{d_y}$. 
Consequently, we have
\begin{align*}
\sa{\partial_y\big(-\hat\cL(\bar x^*,\bar y^*)\big) \ni} -
\begin{bmatrix}
    A\bar x^*-b\\
    g(\bar x^*)
\end{bmatrix} 
+ d_y = 2\xi\big( \nabla \varphi_{d}(\bar{y})- \nabla \varphi_{d}(\bar y^*)\big) + 
\begin{bmatrix}
    A(\bar{x} - \bar x^*)\\
    g(\bar{x})-g(\bar x^*)
\end{bmatrix};
\end{align*}
and \sa{using the fact that $\grad \varphi_d$ is $L_\varphi$-Lipschitz, we get}
\begin{align}\label{eq: 3a}
{\rm dist}\left(0, \sa{\partial_y\big(-\hat\cL(\bar x^*,\bar y^*)\big)} \right) 
\le 
\sa{2\xi L_\varphi\cdot \| \Delta y\|} + \sqrt{\| A \|^2+C_g^2}\cdot \|\Delta x \|\sa{\triangleq R_y(\bar z^*)},
\end{align}
where the constant $C_g$ is defined in \eqref{eq:def_L_g}.
By combining \eqref{eq: 3b} and \eqref{eq: 3a}, we obtain ${\rm dist}(0, \mathcal{F}(\bar z^*)) \le \sqrt{2} \max \sa{\{R_x(\bar z^*),~R_y(\bar z^*)\}}$,
and using Cauchy-Schwarz we immediately get
\begin{align}\label{eq: dist-deltaz}
{\rm dist}(0, \mathcal{F}(\bar z^*)) \le C_1 \sa{\norm{\bar z^*-\bar z},}
\end{align}
where $C_1 \triangleq  \sqrt{2}~\sa{\max \{ (4\xi^2L_\varphi^2 + \|A\|^2 + C_g^2 )^{1/2}, ( 4\xi^2L_\varphi^2 + \| A\|^2 + B_g^2 )^{1/2} \}=\sqrt{2}(4\xi^2L_\varphi^2 + \| A\|^2 + B_g^2 )^{1/2}}$.

Note that $T: \sa{\bar \cZ} \rightarrow \hat\cZ$ is a continuous function from Berge’s maximum theorem \cite[Corollary 9.20]{sundaram1996first}.
This, together with the compactness of $\bar{\mathcal{Z}}$, implies that $\{T(\bar z): \bar{z} \in \bar{\mathcal{Z}}\}$ is compact.
\sa{Invoking \Cref{lem: msr-compact} for the compact set} $\sa{\cS\triangleq}\bar{\mathcal{Z}} \cup \{T(\bar z): \bar{z} \in \bar{\mathcal{Z}}\}$ and using \eqref{eq: dist-deltaz}, we obtain 
\begin{align*}
\operatorname{dist}\big(\bar z^*,\sa{\hat\cZ^\star}\big)
&\le \sa{\rho(\cS)}\cdot{\rm dist}\big(0, \mathcal{F}(\bar z^*)\big)
\le \sa{\rho(\cS)\cdot C_1 
\| \bar z^* - \bar z\|;}
\end{align*}
\sa{therefore, using the \jw{triangle} inequality, we get}
\begin{align*}
{\rm dist}\big(\bar z,\sa{\hat\cZ^\star}\big)
&\le \|\bar z^*-\bar z\| + {\rm dist}\big(\bar z^*,\sa{\hat\cZ^\star}\big)
\le \sa{(1+\sa{\rho(\cS)} C_1 
)}\,\|\bar z^*-\bar z\|.
\end{align*}
Moreover, from \eqref{eq: qg-quad}, we have $\mathcal{G}_\xi(\bar z) \ge \xi\|\bar z^*-\bar z\|^2$;
thus,
\begin{align*}
\mathcal{G}_\xi(\bar z)
\ge \frac{\xi}{\sa{(1+\rho(\cS) C_1 
)^2}}\,{\rm dist}^2\big(\bar z,\sa{\hat\cZ^\star}\big).
\end{align*}
By letting $\bar{\rho}(\bar\cZ) = (1+ \rho(\cS) C_1 
)^2/\xi$, we finish the proof.
\end{proof}
With the above quadratic growth property at hand, we are ready to show linear convergence rate of \texttt{rAPDB}, displayed in \Cref{alg:pdhg-qcqp}.
\subsection{Linear convergence of \texttt{rAPDB}}
\begin{theorem}\label{thm: lr_rapdb}
Consider \emph{\texttt{rAPDB}} in \Cref{alg:pdhg-qcqp}. For \emph{\rapdbxy}, suppose \cref{as:sets,as:Lipschitz-grads,as:saddle-point,assu: metric-subreg} hold, and for \emph{\rapdbyx} in addition to these, suppose \cref{as:dual-bound} holds as well. Let $\hat\tau=\eta\Psi_1$ for \emph{\rapdbxy} and $\hat\tau=\eta\Psi_2$ for \emph{\rapdbyx}.

For the case $\mu=0$, the restart parameter $\hat K\in\integers_+$ for \emph{\rapdb} is chosen such that
\begin{align}
\label{eq: hatK-restart}
\hat{K} \ge
\begin{cases}
\displaystyle
\max\{4e^2\rho c_1(1+c_2^2),\, 2c_1/\xi\},
& \text{for } \emph{\texttt{rAPDB-xy}}, \\[2mm]
\displaystyle
\max\{4e^2\hat{\rho} {c}_1(1+\hat{c}_2^2),\, 2{c}_1/\xi\},
& \text{for } \emph{\texttt{rAPDB-yx}},
\end{cases}
\end{align}
where $c_1 \triangleq \frac{L_\varphi}{2}\frac{\bar\tau}{\hat\tau}\frac{1}{\min\{\tau_0,\sigma_0\}}$, $c_2 \triangleq \sa{\sqrt{\frac{L_\varphi \cdot \max\{\gamma_0, 1\}}{ \min \{ {\gamma_0 (1-(c_\alpha + c_\beta))}, 1 \}}}}$, 
and $\hat{c}_2 \triangleq \sa{\sqrt{\frac{L_\varphi \cdot \max\{\gamma_0, 1\}}{ \min \{\gamma_0, 1-c_\alpha\}}}}$; moreover, $\rho$ and $\hat{\rho}$ are some positive constants pertaining to the quadratic growth property in \cref{thm: qg} for some compact sets related to \emph{\texttt{rAPDB-xy}} and \emph{\texttt{rAPDB-yx}}, respectively. More precisely, $\rho\triangleq \bar\rho(\cZ_0)$ is associated with \emph{\texttt{rAPDB-xy}} for the compact set
\begin{align*}
\mathcal{Z}_0 \triangleq \left\{z \in {\cZ}: \| z - z^{0,0}\| \le R \right\},
\end{align*}
where $R\triangleq \tfrac{c_2+1}{1-1/e}\;{\rm dist}\bigl(z^{0,0},{\cZ}^\star\bigr)$; and $\hat\rho\triangleq \bar\rho(\hat\cZ)$ is associated with \emph{\texttt{rAPDB-yx}} for the compact set $\hat{\cZ}$.

For the case $\mu >0$, \jw{we set $\varphi_p(\cdot)=\frac{1}{2}\|\cdot\|^2$} and the parameter $\hat K\in\integers_+$ is chosen such that
\begin{align}
\label{eq: hatK-restart-k2}
\hat{K} \ge
\begin{cases}
\displaystyle
\max\{2e\sqrt{\rho c_3(1+c_2^2)},\, \sqrt{2c_3/\xi},\,2\},
& \text{for } \emph{\texttt{rAPDB-xy}}, \\[2mm]
\displaystyle
\max\{2e\sqrt{\hat{\rho}{c}_3(1+\hat{c}_2^2)},\, \sqrt{2c_3/\xi},\,2\},
& \text{for } \emph{\texttt{rAPDB-yx}},
\end{cases}
\end{align}
where \sa{$c_3 \triangleq \frac{6L_\varphi}{\mu}\frac{1}{(\hat{\tau})^2}\frac{1}{\min\{1,\gamma_0\}}$.} 
Then, the following results hold:
\begin{enumerate}
\item[(a)] For \emph{\texttt{rAPDB-xy}}, $\{z^{t,0}\}_{t\ge 0}$ is bounded and satisfies 
\begin{align}
\label{eq:result-iterate-bound}
    \bigl\|z^{t,0}-z^{0,0}\bigr\| \le R,
    \qquad\forall~t\geq 0.
\end{align}
\item[(b)] \sa{For \emph{\texttt{rAPDB-xy}} and \emph{\texttt{rAPDB-yx}},} the distance between 
$\{z^{t,0}\}_{t\geq 0}$ and 
$\hat{\cZ}^\star$ diminish exponentially, i.e.,
\begin{align}
\label{eq:result-rate}
{\rm dist}\bigl(z^{t,0},\hat{\cZ}^\star\bigr)
\le
e^{-t}\,{\rm dist}\bigl(z^{0,0},\hat{\cZ}^\star\bigr),\qquad \forall t \ge 0.
\end{align}
\end{enumerate}
Moreover, $\lim_{t \rightarrow +\infty} z^{t,0} = \tilde{z}^\star$ for some saddle point $\tilde{z}^\star \in \hat{\mathcal{Z}}^\star$. 
\end{theorem}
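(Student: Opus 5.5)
The plan is to prove a one-epoch contraction and then iterate it by induction over the outer index $t$. Fix an epoch $t$ with starting point $z^{t,0}$ and output $z^{t+1,0}=\bar z^{t,\hat K}$. I will take $z^{t,\star}\triangleq\Pi_{\hat\cZ^\star}(z^{t,0})$ as the reference saddle point.

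\textbf{Step 1 (distance bound).} Plug $(x,y)=z^{t,\star}$ into \eqref{eq: 4.16apd} (for \rapdbxy) or \eqref{eq: 3.3apd} (for \rapdbyx). The gap term $\cL(\bar x^{\hat K},y^{t,\star})-\cL(x^{t,\star},\bar y^{\hat K})$ is nonnegative at a saddle point, so it can be dropped. Using \eqref{eq:Bregman-bounds}, this gives $\|z^{t,\hat K}-z^{t,\star}\|\le c_2\,{\rm dist}(z^{t,0},\hat\cZ^\star)$ for the last iterate, with $c_2$ replaced by $\hat c_2$ for \rapdbyx. By convexity of the Bregman bound along the averaging, the same type of estimate should hold for every intermediate iterate $z^{t,k}$, since the lemma applies with any $K$. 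Hence the averaged output satisfies $\|z^{t+1,0}-z^{t,\star}\|\le c_2\,{\rm dist}(z^{t,0},\hat\cZ^\star)$.

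\textbf{Step 2 (smoothed-gap bound).} Start from $\cG_\xi(\bar z^{\hat K})=\sup_{z'}\{\cQ(\bar z^{\hat K},z')-2\xi\mathbf D(z',\bar z^{\hat K})\}$. The bounds \eqref{eq: 4.16apd}/\eqref{eq: 3.3apd} give $\cQ(\bar z^{\hat K},z')\le\Delta(z')/T_{\hat K}$. Next bound $\Delta(z')\le c_1' \|z'-z^{t,0}\|^2$, and use \eqref{eq:Bregman-bounds} and the lower bound on $T_{\hat K}$ from \cref{lem: apdb-sublinear}/\cref{lem: apdb-sublinear-nm}: $T_{\hat K}\ge \hat K\hat\tau/\bar\tau$ when $\mu=0$, and $T_{\hat K}\ge\frac{\Gamma^2}{12\mu\sigma_0}\hat K^2$ when $\mu>0$. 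Together these yield $\Delta(z')/T_{\hat K}\le \frac{c_1}{\hat K}\|z'-z^{t,0}\|^2$ (respectively $\frac{c_3}{\hat K^2}$). Write $\|z'-z^{t,0}\|^2\le 2\|z'-\bar z^{\hat K}\|^2+2\|\bar z^{\hat K}-z^{t,0}\|^2$. The condition $\hat K\ge 2c_1/\xi$ (respectively $\hat K^2\ge 2c_3/\xi$) lets the $\xi\|z'-\bar z\|^2$ regularization absorb the first term. What remains is $\cG_\xi(z^{t+1,0})\le \frac{2c_1}{\hat K}\|z^{t+1,0}-z^{t,0}\|^2$. Applying the triangle inequality through $z^{t,\star}$ and Step 1 gives
\[
\cG_\xi(z^{t+1,0})\le \frac{4c_1(1+c_2^2)}{\hat K}\,{\rm dist}^2(z^{t,0},\hat\cZ^\star).
\]
The constants are the reason for the factor $4(1+c_2^2)$ in \eqref{eq: hatK-restart}.

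\textbf{Step 3 (quadratic growth and contraction).} Apply \cref{thm: qg} on a compact set containing $z^{t+1,0}$. This gives ${\rm dist}^2(z^{t+1,0},\hat\cZ^\star)\le\rho\,\cG_\xi(z^{t+1,0})\le e^{-2}{\rm dist}^2(z^{t,0},\hat\cZ^\star)$, using $\hat K\ge 4e^2\rho c_1(1+c_2^2)$ (respectively $\hat K^2\ge 4e^2\rho c_3(1+c_2^2)$). This is \eqref{eq:result-rate} for one step, and induction finishes it.

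\textbf{Main obstacle: the compact set.} For \rapdbyx{} this is immediate, since $\hat\cZ$ is compact. For \rapdbxy{} the dual domain is unbounded, so the induction must carry \eqref{eq:result-iterate-bound} simultaneously with \eqref{eq:result-rate}. Assume $\|z^{s,0}-z^{0,0}\|\le R$ and ${\rm dist}(z^{s,0},\cZ^\star)\le e^{-s}{\rm dist}(z^{0,0},\cZ^\star)$ for all $s\le t$. By Step 1, $\|z^{s+1,0}-z^{s,0}\|\le(1+c_2)\,{\rm dist}(z^{s,0},\cZ^\star)$. Summing this geometric series gives $\|z^{t+1,0}-z^{0,0}\|\le\frac{1+c_2}{1-1/e}{\rm dist}(z^{0,0},\cZ^\star)=R$. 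Thus $z^{t+1,0}\in\cZ_0$, so $\rho=\bar\rho(\cZ_0)$ is valid before Step 3 is invoked for that epoch. I must check that $\cZ_0\cap\cZ^\star\ne\emptyset$, which holds because $R\ge{\rm dist}(z^{0,0},\cZ^\star)$.

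The same summability argument shows $\{z^{t,0}\}$ is Cauchy. Its limit $\tilde z^\star$ satisfies ${\rm dist}(\tilde z^\star,\hat\cZ^\star)=0$, and $\hat\cZ^\star$ is closed, so $\tilde z^\star\in\hat\cZ^\star$.
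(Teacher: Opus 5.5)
Your proposal is correct and follows essentially the same route as the paper. The shared steps are: the per-iterate bound from \eqref{eq: 4.16apd}/\eqref{eq: 3.3apd} at a saddle point, passed to the weighted average by convexity of the norm; the $\mathcal{O}(1/\hat K)$ (resp.\ $\mathcal{O}(1/\hat K^2)$) bound $\mathcal{G}_\xi(z^{t+1,0})\le \frac{4c_1(1+c_2^2)}{\hat K}{\rm dist}^2(z^{t,0},\hat{\mathcal{Z}}^\star)$; quadratic growth from \cref{thm: qg}; and, for \rapdbxy{}, a joint induction on \eqref{eq:result-iterate-bound} and \eqref{eq:result-rate} together with a geometric-series Cauchy argument. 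The only cosmetic difference is that you absorb $\|z'-\bar z^{t,\hat K}\|^2$ via $\|z'-z^{t,0}\|^2\le 2\|z'-\bar z^{t,\hat K}\|^2+2\|\bar z^{t,\hat K}-z^{t,0}\|^2$ rather than computing the exact supremum of the concave quadratic; this gives the same $2c_1/\hat K$ bound under the same threshold, and your explicit check that $\mathcal{Z}_0\cap\mathcal{Z}^\star\neq\emptyset$ fills in a detail the paper leaves implicit.
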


\begin{proof}
\sa{Consider \rapdbxy, i.e., ${\rm routine}=\apdbxy{}$ in \Cref{alg:pdhg-qcqp}, to solve \eqref{eq:minmax_rest} with $B=\infty$. Recall from \Cref{tab:xy-yx-notation} that for this setting $\hat\cY=\cY$, $\hat\cZ=\cZ$ and $\hat\cZ^\star=\cZ^\star$. Thus, given an arbitrary saddle point $({x}^\star, {y}^\star) \in {\cZ}^\star$, since ${\cL}(\bar{x}^{t,k},{y}^\star) - {\cL}({x}^\star,\bar{y}^{t,k}) \ge 0$ for all $t\ge 0$ and $k \ge 1$, \eqref{eq: 4.16apd} implies that}
\[
\frac{\gamma_0 (1-(c_\alpha + c_\beta))}{\sigma_0}
\textbf{D}_{p}({x}^\star,x^{t,k}) + \frac{1}{\sigma_0}
\textbf{D}_{d}({y}^\star,y^{t,k}) \le\Delta({x}^\star,{y}^\star) \le \max\left\{\frac{1}{2\tau_0},\frac{1}{2\sigma_0}\right\}\cdot L_\varphi \|z^{t,0}- {z}^\star\|^2
\]
\sa{holds for all $t\ge 0$ and $k \ge 1$}, where we used \sa{$\gamma_{k-1}\ge \gamma_0$} for any $k\ge 1$; \sa{hence, $\sigma_0=\gamma_0\tau_0$ implies that}
\begin{align} \label{eq:restart_proof_psudo_contraction}
\|z^{t,k}-{z}^\star\|\le c_2\,\|z^{t,0}- {z}^\star\|,\qquad\forall~t\geq 0,\quad \forall~k\geq 1,  
\end{align}
holds for both $\mu=0$ and $\mu>0$. \sa{For the case $\mu = 0$,  \Cref{lem: apdb-sublinear,lem: apdb-sublinear-nm} show $T_K \ge K \hat\tau/\bar{\tau}$ for any $K\geq 1$, where $\hat\tau=\eta \Psi_1$; therefore, invoking \eqref{eq: 4.16apd} one more time, we get}
\begin{align} \label{eq:restart_proof_sublinear}
{\cL}(\bar x^{t,K},y)-{\cL}(x,\bar y^{t,K} )\le  \frac{c_1}{K}\,\|z-z^{t,0}\|^2 \quad \forall~t \ge 0,\quad \forall~K\ge 1,\quad \forall~z = (x,y)\in \sa{\cZ}. 
\end{align}
\sa{Next, we consider the smoothed duality gap $\mathcal{G}_\xi(\cdot)$ in \Cref{def: smoothed_duality_gap1} and we show an upper bound on $\mathcal{G}_\xi(\bar z^{t,K})$. Indeed, \eqref{eq:restart_proof_sublinear} implies that}
\begin{align}\label{eq: quadGxi}
{\cL}(\bar x^{t,K},y)-{\cL}(x,\bar y^{t,K})-2{\xi}\textbf{D}_{p}(x,\bar x^{t,K})-2{\xi}\textbf{D}_{d}(y,\bar y^{t,K})
\le \frac{c_1}{K}\|z-z^{t,0}\|^2-\xi\|z-\bar z^{t,K}\|^2,
\end{align}
\sa{which follows from \eqref{eq:Bregman-bounds}. For any $K\ge \frac{2c_1}{\xi}$, since $\frac{c_1}{K}<\xi$}, the right-hand side  of \eqref{eq: quadGxi} is concave in $z$ \sa{and its supremum is} $\frac{\xi c_1/K}{\xi-c_1/K}\,\|\bar z^{t,K}-z^{t,0}\|^2$. \sa{Therefore, for any $K\ge \frac{2c_1}{\xi}$}, \sa{due to} $\xi-\frac{c_1}{K}\ge \frac{\xi}{2}$, we get
\begin{align}\label{eq: G_xi_upperbound}
\max_{z}\Bigl\{\frac{c_1}{K}\|z-z^{t,0}\|^2-\xi\|z-\bar z^{t,K}\|^2\Bigr\}
\le \frac{2c_1}{K}\,\|\bar z^{t,K}-z^{t,0}\|^2.
\end{align}
Hence, \sa{\eqref{eq: quadGxi} and \Cref{def: smoothed_duality_gap1} imply for any $t\geq 0$ and $K\ge \frac{2c_1}{\xi}$ that}
\begin{align}\label{eq:gxi_linearrate_xy}
\mathcal{G}_\xi(\bar z^{t,K})\le \frac{2c_1}{K}\,\|\bar z^{t,K}-z^{t,0}\|^2
\le \frac{4c_1}{K}\Bigl(\|z^{t,0}-z^\star\|^2+\|\bar z^{t,K}-z^\star\|^2\Bigr)
\le \frac{4c_1(1+c_2^2)}{K}\,{\rm dist}^2\bigl(z^{t,0},{\cZ}^\star\bigr),
\end{align}
where $z^\star\in \mathop{\arg\min}_{z\in{\cZ}^\star}\|z^{t,0}-z\|^2$, \sa{and the last inequality follows from \eqref{eq:restart_proof_psudo_contraction}, which implies that $\|\bar z^{t,K}-{z}^\star\|\le c_2\,\|z^{t,0}- {z}^\star\|$.}

\sa{Consider \rapdbyx, i.e., ${\rm routine}=\apdbyx{}$ in \Cref{alg:pdhg-qcqp}, to solve \eqref{eq:minmax_rest} using $B>\bar B$ such that $\bar B$ satisfies \cref{as:dual-bound}. Given an arbitrary saddle point $({x}^\star, {y}^\star) \in \hat{\cZ}^\star$, similar to the \rapdbxy{} discussion above, it follows from \eqref{eq: 3.3apd} that}
\begin{align}
\|z^{t,k}-\hat{z}^\star\|\le \hat{c}_2\,\|z^{t,0}-\hat{z}^\star\|,\qquad\forall~t\geq 0,\quad \forall~k\geq 1,
\label{eq:restart_proof_psudo_contraction_yx}
\end{align}
holds for both $\mu=0$ and $\mu>0$. \sa{For the case $\mu = 0$, using $z^\star\in \mathop{\arg\min}_{z\in\hat{\cZ}^\star}\|z^{t,0}-z\|^2$ as in \eqref{eq:gxi_linearrate_xy}, for any $t\geq 0$ and $K \ge \frac{2{c}_1}{\xi}$, it holds that}
\begin{align}
\mathcal{G}_\xi(\bar z^{t,K})\le \frac{2{c}_1}{K}\,\|\bar z^{t,K}-z^{t,0}\|^2
\le \frac{4{c}_1}{K}\Bigl(\|z^{t,0}-z^\star\|^2+\|\bar z^{t,K}-z^\star\|^2\Bigr)
\le \frac{4{c}_1(1+\hat{c}_2^2)}{K}\,{\rm dist}^2\bigl(z^{t,0},\hat{\cZ}^\star\bigr).\label{eq:gxi_linearrate_yx}    
\end{align}
\paragraph{Proof for \rapdbxy{} with $\mu=0$.}
\sa{Recall that $\hat\cZ^\star=\cZ^\star$ for this scenario.} Now, \sa{for \rapdbxy{} with $\mu=0$, we prove the results in \eqref{eq:result-iterate-bound} and \eqref{eq:result-rate} by induction.} When $t=0$, 
\sa{both \eqref{eq:result-iterate-bound} and \eqref{eq:result-rate} trivially hold.
Next, we consider the inductive step, i.e., given an arbitrary $D\in\integers_+$, suppose that \eqref{eq:result-iterate-bound} and \eqref{eq:result-rate} hold for all $t\le D$. Using $z_t^\star \in \mathop{\arg \min}_{z \in {\cZ}^\star}\| z^{t,0} - z\|^2$ for $t=0,\ldots,D$, we argue that}
\begin{align}
\MoveEqLeft \|z^{D+1,0}-z^{0,0}\|
\le \sum_{t=0}^{D}\|z^{t+1,0}-z^{t,0}\|
\le \sum_{t=0}^{D}\bigl(\|z^{t+1,0}-z_t^\star\|+\|z_t^\star-z^{t,0}\|\bigr) \nonumber \\
&\le (c_2+1)\sum_{t=0}^{D}{\rm dist}(z^{t,0},{\cZ}^\star)
\le (c_2+1)\sum_{t=0}^{D}e^{- t}{\rm dist}(z^{0,0},{\cZ}^\star) \le \frac{c_2+1}{1-1/e}{\rm dist}(z^{0,0},{\cZ}^\star), 
\label{eq: cauchy-seq}
\end{align}
where the third inequality uses \eqref{eq:restart_proof_psudo_contraction} and the fact that $z^{t+1,0}$ can be written as a convex combination of $\{z^{t,k}\}_{k=1}^{\hat{K}}$, and the fourth inequality follows from the induction hypothesis of \eqref{eq:result-rate} for $t \le D$. \sa{This completes the induction argument for \eqref{eq:result-iterate-bound} establishing that $\{z^{t,0}\}_{t\geq 0}\subset\cZ_0$.} 
Furthermore,
\begin{align*}
{\rm dist}^2\bigl(z^{D+1,0},{\cZ}^\star\bigr)
&\overset{(a)}{\le} \sa{\rho} \cdot \mathcal{G}_\xi\!\bigl(z^{D+1,0}\bigr)
= \sa{\rho} \cdot \mathcal{G}_\xi\bigl(\bar z^{D,\hat{K}}\bigr)
\overset{(b)}{\le}\ \sa{\rho}\cdot \frac{4c_1(1+c_2^2)}{\hat{K}}\,\sa{{\rm dist}^2\bigl(z^{D,0},{\cZ}^\star\bigr)}\\
&\le e^{-2}{\rm dist}^2\bigl(z^{D,0},{\cZ}^\star\bigr)
\le e^{-2(D+1)}{\rm dist}^2\bigl(z^{0,0},{\cZ}^\star\bigr),
\end{align*}
where $(a)$ \sa{follows from the quadratic growth property in \Cref{thm: qg} corresponding to the compact set $\cZ_0\supset \{z^{t,0}\}_{t\geq 0}$, i.e., $\rho=\bar\rho(\cZ_0)$,} 
and $(b)$ is due to the sublinear rate of $\mathcal{G}_\xi(\bar z^{D,K})$ in \eqref{eq:gxi_linearrate_xy} for $K > 0$. \sa{This completes the induction argument for \eqref{eq:result-rate} establishing the linear convergence rate.} \sa{Finally, \eqref{eq: cauchy-seq} implies that $\sum_{t=0}^{\infty}\norm{z^{t+1,0} -z^{t,0}}<\infty$, which further implies} $\{z^{t,0}\}_{t \ge 0}$ is a Cauchy sequence. Thus, $\lim_{t \rightarrow +\infty}{\rm dist}(z^{t,0},{\cZ}^\star) = 0$ implies that there exists $\tilde{z}^\star \in {\cZ}^\star$ for which $\lim_{t \rightarrow +\infty} z^{t,0} = \tilde{z}^\star$. 
\paragraph{Proof for \rapdbyx{} with $\mu=0$.} \sa{For this setting, by construction, $\hat\cZ$ is compact and we have $\{z^{t,0}\}_{t\geq 0}\subset\hat\cZ$.} For any \sa{$D \in\integers_+$}, it follows that
\begin{align*}
{\rm dist}^2\bigl(z^{D+1,0},\hat{\mathcal{Z}}^\star\bigr)
&\overset{(a)}{\le} \hat{\rho} \cdot {\mathcal{G}}_\xi \bigl(z^{D+1,0}\bigr)
= \hat{\rho} \cdot {\mathcal{G}}_\xi\bigl(\bar z^{D,\hat{K}}\bigr)
\overset{(b)}{\le}\ \hat{\rho}\cdot \frac{4c_1(1+\hat{c}_2^2)}{\hat{K}}\,{\rm dist}^2\bigl(z^{D,0},\hat{\mathcal{Z}}^\star\bigr)\\
&\le e^{-2}{\rm dist}^2\bigl(z^{D,0},\hat{\mathcal{Z}}^\star\bigr)
\le e^{-2(D+1)}{\rm dist}^2\bigl(z^{0,0},\hat{\mathcal{Z}}^\star\bigr),
\end{align*}    
where $(a)$ follows from the quadratic growth property in Lemma \ref{thm: qg} over the compact set $\hat{\cZ}$, i.e., $\hat\rho=\bar\rho(\hat\cZ)$, and $(b)$ is due to \eqref{eq:gxi_linearrate_yx}. Choosing $z_t^\star \in \mathop{\arg \min}_{z \in \hat{\mathcal{Z}}^\star}\| z^{t,0} - z\|^2 $ for $t \ge 0$, we have, 
\begin{align}
& \quad \sum_{t=0}^{\infty}\|z^{t+1,0}-z^{t,0}\|
\le \sum_{t=0}^{\infty}\bigl(\|z^{t+1,0}-z_t^\star\|+\|z_t^\star-z^{t,0}\|\bigr) \nonumber \\
&\le (\hat{c}_2+1)\sum_{t=0}^{\infty}{\rm dist}(z^{t,0},\hat{\cZ}^\star)
\le (\hat{c}_2+1)\sum_{t=0}^{\infty}e^{- t}{\rm dist}(z^{0,0},\hat{\cZ}^\star)<\infty; \label{eq: cauchy-seq2}
\end{align}
hence, $\{z^{t,0}\}_{t \ge 0}$ is a Cauchy sequence and $\lim_{t \rightarrow +\infty} z^{t,0} = \tilde{z}^\star$ for some $\tilde{z}^\star \in \hat{\mathcal{Z}}^\star$. 

\paragraph{Proof for \rapdb{} with $\mu >0$.} The case of $\mu>0$ for \rapdb{} can be proved by utilizing the bound $T_K \ge \frac{\Gamma^2}{12\mu \sigma_0}K^2$ with $\Gamma = \mu \hat{\tau}\sqrt{\gamma_0}$ for $K\ge 2$, \sa{which holds for \apdbxy{} --see~\Cref{lem: apdb-sublinear,lem: apdb-sublinear-nm} for $c_{\rm nm}=0$ and $c_{\rm nm}=1$, respectively, and also for \apdbyx{} --see~\Cref{lem: apdb-yx-monotone,lem: apdb-yx-sublinear-nm} for $c_{\rm nm}=0$ and $c_{\rm nm}=1$, respectively.} For \rapdbxy{}, 
\eqref{eq: 4.16apd} implies that for any $t \ge 0$ and $K\ge 2$, we have
${\cL}(\bar x^{t,K},y)-{\cL}(x,\bar y^{t,K}) \le
\frac{c_3}{K^2}\|z-z^{t,0}\|^2$ for all $z=(x,y)\in{\cZ}$; hence, 
\begin{align} \label{eq:lr_sc_gxi_ub}
\mathcal G_\xi(\bar z^{t,K})\le
\max_{z}
\left\{
\frac{c_3}{K^2}\|z-z^{t,0}\|^2-
\xi\|z-\bar z^{t,K}\|^2
\right\}.    
\end{align}
\sa{For any $t\geq 0$ and $K\ge \max\{\sqrt{{2c_3}/{\xi}},~2\}$, we have
\(c_3/K^2\le \xi/2<\xi\); thus, the r.h.s. of \eqref{eq:lr_sc_gxi_ub} is a strongly concave problem. Explicitly computing the supremum yields for any $t\geq 0$ that}
\[
\mathcal G_\xi(\bar z^{t,K}) \le \max_{z}
\left\{
\frac{c_3}{K^2}\|z-z^{t,0}\|^2-
\xi\|z-\bar z^{t,K}\|^2
\right\}
\leq
\frac{2c_3}{K^2}\|\bar z^{t,K}-z^{t,0}\|^2,\qquad \sa{\forall~K\ge \max\left\{\sqrt{{2c_3}/{\xi}},~2\right\}.}
\]
For any $t\geq 0$, let $z_t^\star
\in
\operatorname*{argmin}_{z\in{\cZ}^\star}
\|z^{t,0}-z\|$. \sa{Therefore, using \eqref{eq:restart_proof_psudo_contraction} for large enough $K$, we get}
\[
\mathcal G_\xi(\bar z^{t,K})
\le \frac{4c_3}{K^2}(\|\bar z^{t,K}-z_t^\star\|^2
+\|z^{t,0}-z_t^\star\|^2) \le \frac{4c_3}{K^2}(1+c_2^2)\|z^{t,0}-z_t^\star\|^2 =
\frac{4c_3(1+c_2^2)}{K^2}
\operatorname{dist}^2(z^{t,0},{\cZ}^\star).
\]
\sa{Note that the argument we used for showing \eqref{eq: cauchy-seq} still holds; therefore, we have $\{z^{t,0}\}_{t\geq 0}\subset\cZ_0$ for this scenario as well.} Thus, using the quadratic growth property in \Cref{thm: qg} over the compact set $\cZ_0$,
\[
\operatorname{dist}^2(z^{t+1,0},{\cZ}^\star)
=
\operatorname{dist}^2(\bar z^{t,{K}},{\cZ}^\star)
\le
\rho\,\mathcal G_\xi(\bar z^{t,{K}}) \le \frac{4\rho c_3(1+c_2^2)}{K^2}
\operatorname{dist}^2(z^{t,0},{\cZ}^\star)\leq e^{-2}\operatorname{dist}^2(z^{t,0},{\cZ}^\star),
\]
for $K\ge
\max\{
2e\sqrt{\rho c_3(1+c_2^2)},
\sqrt{{2c_3}/{\xi}}, 2 \}$, where $\rho=\bar\rho(\cZ_0)$.
Thus, $\operatorname{dist}(z^{t+1,0},{\cZ}^\star)\le e^{-1}
\operatorname{dist}(z^{t,0},{\cZ}^\star)$.
The associated results of \rapdbyx{} can be derived in a similar manner.

\end{proof}

\subsection{Linear convergence of \texttt{rAPDB-ada}}
\label{sec:rAPDB-ada}
\paragraph{\texttt{rAPDB-yx-ada}.} Suppose \Cref{as:sets,as:Lipschitz-grads,as:saddle-point,assu: metric-subreg,as:dual-bound} hold. For the case $\mu = 0$, given some $\xi >0$, by \eqref{eq:gxi_linearrate_yx} and the quadratic growth property in Lemma \ref{thm: qg} over the compact set $\hat{\cZ}$, it holds that
\begin{align*}
{\mathcal{G}}_\xi(\bar z^{t,K})\le \frac{4 {c}_1(1+\hat{c}_2^2)}{K}\,{\rm dist}^2\bigl(z^{t,0},\hat{\mathcal{Z}}^\star\bigr) \le \frac{4{c}_1(1+\hat{c}_2^2)}{K} \hat{\rho} \cdot {\mathcal{G}}_\xi(z^{t,0}) \quad \forall t \ge 1,\quad \forall~K \ge {2{c}_1}/{\xi}.
\end{align*}
Consequently, for every $t\geq 0$, the requirement ${\mathcal{G}}_\xi(\bar z^{t,K}) 
\le q \cdot {\mathcal{G}}_\xi(z^{t,0})$ can always be satisfied for any $K \ge \bar K$ where the integer $\bar{K} \triangleq \lceil \max \{ 4 {c}_1(1+\hat{c}_2^2) \hat{\rho}/q, 2c_1/\xi \}\rceil$. Thus, we obtain ${\mathcal{G}}_\xi({z}^{t,0}) \le q^t  {\mathcal{G}}_\xi({z}^{0,0})$ for all $t\ge 0$.
This, together with the quadratic growth property, gives
\begin{align}\label{eq:yx-ada-contraction}
{\rm dist}^2({z}^{t,0}, \hat{\mathcal{Z}}^\star) \le \hat{\rho } \cdot {\mathcal{G}}_\xi({z}^{t,0}) \le q^t \hat{\rho}\cdot {\mathcal{G}}_\xi({z}^{0,0})\qquad \forall~t\geq 0.
\end{align}
Choosing $z_t^\star \in \mathop{\arg \min}_{z \in \hat{\mathcal{Z}}^\star}\| z^{t,0} - z\|^2 $ for $t \ge 0$, by \eqref{eq:restart_proof_psudo_contraction_yx} and \eqref{eq:yx-ada-contraction}, we have, 
\begin{align}
\MoveEqLeft \sum_{t=0}^{\infty}\|z^{t+1,0}-z^{t,0}\|
\le \sum_{t=0}^{\infty}\bigl(\|z^{t+1,0}-z_t^\star\|+\|z_t^\star-z^{t,0}\|\bigr) \nonumber \\
&\le (\hat{c}_2+1)\sum_{t=0}^{\infty}{\rm dist}(z^{t,0},\hat{\cZ}^\star)
\le (\hat{c}_2+1) \sum_{t=0}^{\infty}{q}^{t/2} \sqrt{\hat{\rho}\cdot {\mathcal{G}}_\xi({z}^{0,0})} < \infty. \label{eq: cauchy-seq2-ada}
\end{align}
Hence, $\{z^{t,0}\}_{t \ge 0}$ is a Cauchy sequence and $\lim_{t \rightarrow +\infty} z^{t,0} = \tilde{z}^\star\in \hat{\mathcal{Z}}^\star$ with a linear rate. Results for the case $\mu>0$ can be derived similarly by replacing the $\mathcal{O}(1/K)$ upper
bound in~\eqref{eq:gxi_linearrate_yx} with the corresponding
$\mathcal{O}(1/K^2)$ bound.

\paragraph{\texttt{rAPDB-xy-ada}.} Suppose that \Cref{as:sets,as:Lipschitz-grads,as:saddle-point,assu: metric-subreg} hold. If the restart points \(\{z^{t,0}\}_{t\ge 0}\) generated by \texttt{rAPDB-xy-ada} are bounded, then the linear convergence result for \texttt{rAPDB-xy-ada} can be derived in the same way as that for \texttt{rAPDB-yx-ada}. Establishing the same guarantee without this boundedness assumption is left as future work.

\section{Numerical Experiments}
In this section, we conduct two sets of experiments to evaluate various methods including \texttt{rAPDB}, \texttt{rAPDB-ada}, \texttt{APDB}~\cite{hamedani2021primal}, EGM~\cite[Section 3.3]{diaz2026active}, and MOSEK (called through CVXPY \cite{diamond2016cvxpy}). The first set consists of randomly generated convex QCQPs with \(\mu=0\). The second set is a kernel matrix learning problem, which leads to a strongly convex-concave minimax formulation with \(\mu=2\). All experiments were run on a cluster with 1 NVIDIA A100 GPU, 
128 GB RAM, and 8 CPU cores. Our code is available at \href{https://github.com/JinxinWang-resilience/rAPDB_2026}{https://github.com/JinxinWang-resilience/rAPDB\_2026}.

\subsection{Randomly generated QCQPs}
We consider problems of the form
\begin{align*}
f^\star \triangleq \min_{x\in \mathcal{X}}\; \left\{ f(x) = \frac{1}{2}x^\top Q_0 x + q_0^\top x:\quad  g_i(x) \triangleq \frac{1}{2}x^\top Q_i x + q_i^\top x + r_i \le 0,
\quad i\in [m]\right\}
\end{align*}
where $\mathcal{X}=[-10,10]^n$. The matrices $\{Q_i\}_{i=0}^m \subset \mathbb{S}_+^n$ are constructed as
$Q_i = \Lambda_i^\top S_i \Lambda_i,\ i=0,1,\ldots,m$,
where $\Lambda_i\in\mathbb{R}^{n\times n}$ is a random orthonormal matrix and $S_i\in\mathbb{R}_+^{n\times n}$ is diagonal with entries sampled uniformly from $[0,100]$ \sa{(the smallest element is set to $0$ to ensure $\mu=0$).} The orthonormal matrix $\Lambda_i$ is obtained by computing an orthonormal basis of a random Gaussian matrix $\tilde{\Lambda}_i\in\mathbb{R}^{n\times n}$. The vectors $\{q_i\}_{i=0}^m \subset \mathbb{R}^n$ are sampled from the standard Gaussian distribution $\mathcal{N}(0,I_n)$, while the constants $\{-r_i\}_{i=1}^m$ are drawn independently from the uniform distribution on $[0,1]$.


\begin{figure}[htb]
\centering
\includegraphics[width=0.95\textwidth]{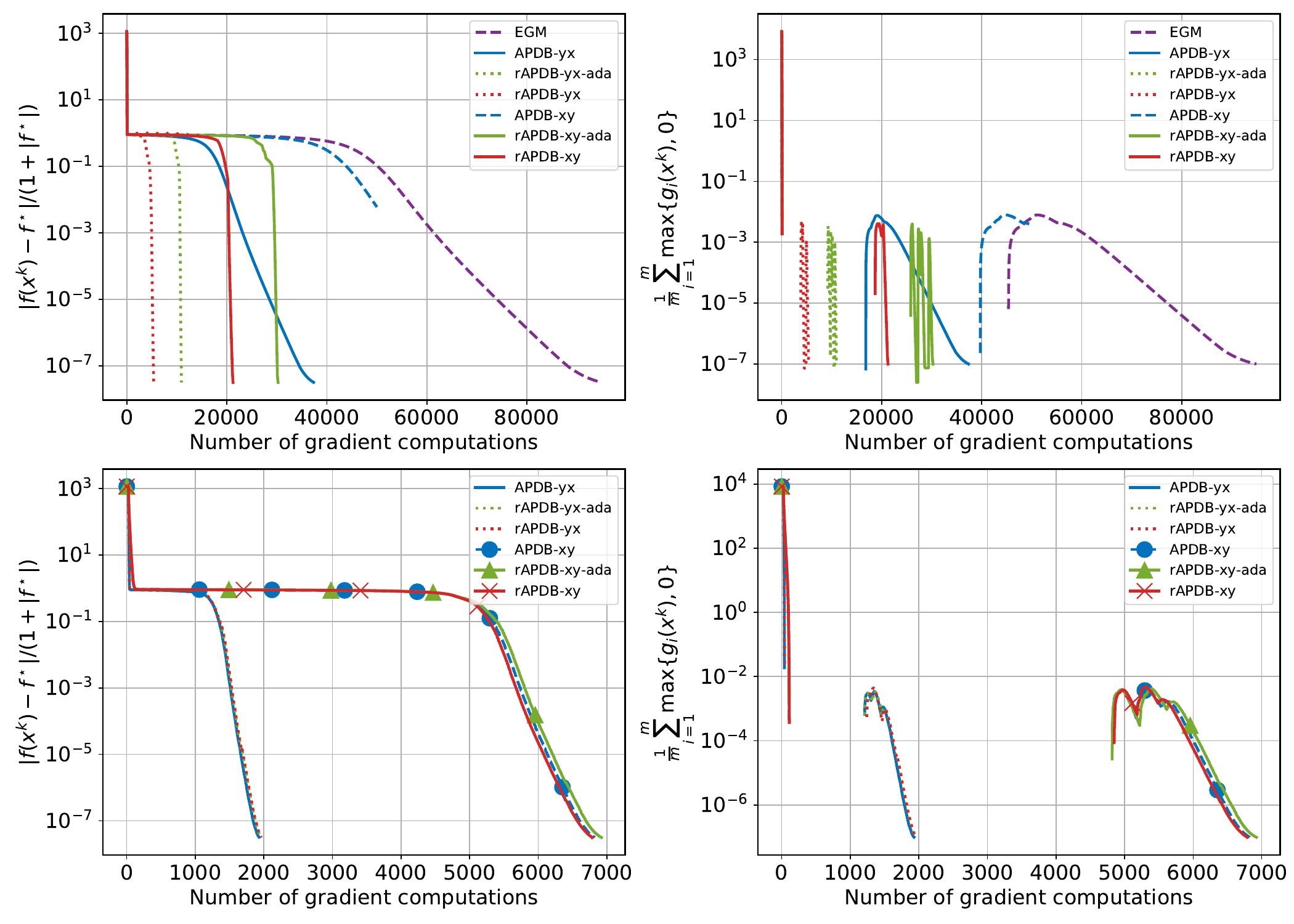}
\caption{\small Convergence performance of \rapdb, \rapdbada, \texttt{APDB}, and \texttt{EGM} on randomly generated QCQP instances with $m=10$ and $n=1000$. The plots display the relative suboptimality and constraint violation against the total number of gradient evaluations. The top row corresponds to monotone step-size search, i.e., $c_{\rm nm}=0$, while the bottom row corresponds to non-monotone step-size search, i.e., $c_{\rm nm}=1$.}
\label{fig:randqcqp}
\vspace*{-4mm}
\end{figure}
\paragraph{Implementation details}
We set \(m=10\), \(n=1000\), generate four random i.i.d. QCQP instances, and run all algorithms until the termination criterion,
{\small
\[
\max\left\{\frac{|f(x^k)-f^\star|}{1 + |f^\star|},
\quad \frac{1}{m}\sum_{i=1}^m \max \left\{g_i(x^k),0 \right\}\right \}\le \varepsilon,
\]}%
is satisfied for \(\varepsilon=10^{-7}\), which accounts for both relative suboptimality and infeasibility error. Here, \(f^\star\) is obtained by MOSEK. The maximum number of iterations of tested algorithms is set to \(50{,}000\). For \texttt{EGM}, we tune the step sizes and report the best-performing results. The backtracking parameter is fixed at \(\eta=0.7\) for all \texttt{APDB}-type methods. For the yx-variants of \texttt{APDB}-type methods, we do not impose the set \(\mathcal{B}\) in \eqref{eq: apdb-yx-y} and choose \(c_\alpha = 0.4\) and \(\delta = 0.5\). For the xy-variants, we choose \(c_\alpha = 0.25\), \(c_\beta = 0.3\), and \(\delta = 0.4\). We test both monotone ($c_{\rm nm}=0$) and non-monotone ($c_{\rm nm}=1$) step-size search strategies. \sa{In the rest of this section, we abbreviate the setting with $c_{\rm nm}=0$ as (mono), and the setting with $c_{\rm nm}=1$ as (non-mono).}

The restart frequencies for \texttt{rAPDB-yx} (mono), \texttt{rAPDB-xy} (mono), \texttt{rAPDB-yx} (non-mono), and \texttt{rAPDB-xy} (non-mono) are set to \(800\), \(2000\), \(400\), and \(1000\), respectively. For the adaptive restart variants of \texttt{APDB}, we fix \(\xi = 0.04\). For \texttt{rAPDB-yx-ada}, we first compute \(\mathcal{G}_{\xi}\) at iteration \(k=50\) without restarting. We then evaluate the adaptive restart condition at every \(200\) iterations 
when $c_{\rm nm}=0$, and at every \(500\) iterations when $c_{\rm nm}=1$,
to check whether \(\mathcal{G}_{\xi}\) has decreased by at least half compared with its value at the last restart point, i.e., we set $q=\frac{1}{2}$. If no restart has ever occurred, the comparison is made with the value of \(\mathcal{G}_{\xi}\) at iteration \(k=50\). Similarly, for \texttt{rAPDB-xy-ada} in both the monotone ($c_{\rm nm}=0$) and non-monotone ($c_{\rm nm}=1$) cases, we first compute \(\mathcal{G}_{\xi}\) at iteration \(k=100\) without restarting, and then evaluate the condition once every \(500\) iterations to check whether \(\mathcal{G}_{\xi}\) has decreased by half, i.e., $q=\frac{1}{2}$. 
If no restart has occurred, the comparison is made with the value of \(\mathcal{G}_{\xi}\) at iteration \(k=100\). 

We adopted the OSQP solver \cite{stellato2018osqp} to solve the strongly convex 
QP subproblems arising in our methods employing adaptive restarts. We include these adaptive variants mainly to evaluate whether the theoretically motivated adaptive restart rule can reduce iteration counts; however, their wall-clock performance is affected by the cost of \(\mathcal{G}_{\xi}\) evaluations and the strongly convex QP subproblem solves. In the implementation of restart-type algorithms, namely \texttt{rAPDB-xy}, \texttt{rAPDB-yx}, \texttt{rAPDB-xy-ada}, and \texttt{rAPDB-yx-ada}, we use the \emph{last iterate} instead of the weighted average iterate (cf. \texttt{line}~\ref{algrapdb-line4} in Algorithm~\ref{alg:pdhg-qcqp}), see, e.g., \cite[Section 3.4]{lu2025cupdlp}. This choice leads to better empirical performance.

\paragraph{Experiment results} Figure~\ref{fig:randqcqp} reports the relative suboptimality and infeasibility error of the tested methods against the total number of gradient evaluations
\((\nabla_x \Phi\) and \(\nabla_y \Phi)\), where each curve is averaged over the four random QCQP instances.
\Cref{tab:timecomparison_random_qcqp} summarizes the corresponding iteration counts, CPU wall-clock time, GPU wall-clock time, and GPU-over-CPU speedups.
Several observations can be made. First, the non-monotone step-size search substantially improves the performance of APDB-type methods: compared with their monotone counterparts, it leads to much smaller iteration counts and wall-clock times. In the monotone setting, even GPU-accelerated first-order methods are slower than MOSEK on these instances, whereas in the non-monotone setting the fastest APDB-type methods become competitive with MOSEK, as their GPU runtime is better than the runtime of MOSEK. Second, among APDB-type methods, the yx-variants consistently outperform the corresponding xy-variants, suggesting that updating the dual variable before the primal variable is advantageous for this class of randomly generated QCQPs. Third, restarts significantly accelerate APDB in the monotone setting; for instance, \texttt{rAPDB-yx} reduces the iteration count from \(34947\) to \(4609\). In the non-monotone setting, however, the baseline APDB method is already very effective, and the benefit of restarts becomes less pronounced. In addition, since adaptive restarts require solving auxiliary subproblems, their wall-clock-time gains can be less significant than those of fixed-frequency restarts. Finally, GPU implementation yields a consistent acceleration, with speedups mostly between \(3\times\) and \(5\times\).

\begin{table}[t]
\centering
\begin{threeparttable}
\caption{Iteration counts and wall-clock time until termination on randomly generated QCQP instances with $m=10$ and $n=1000$.}
\label{tab:timecomparison_random_qcqp}
\setlength{\tabcolsep}{8pt}
\renewcommand{\arraystretch}{1.15}
\begin{tabular}{lcccc}
\toprule
{Method} & {Iter} & {\makecell{CPU\\ Time (s)}} & {\makecell{GPU\\ Time (s)}} & {\makecell{Speedup of\\ GPU over CPU}} \\
\midrule
MOSEK
& -- & 17 & -- & -- \\

\midrule

EGM
& 44392 & 390 & 118 & 3.3$\times$ \\

\midrule

(mono) \texttt{rAPDB-xy}
& 20686 & 301 & 58 & 5.2$\times$ \\

(mono) \texttt{rAPDB-yx}
& 4609 & 76 & 18 & 4.2$\times$ \\

(mono) \texttt{rAPDB-xy-ada}
& 28404 & 445 & 113 & 3.9$\times$ \\

(mono) \texttt{rAPDB-yx-ada}
& 10319 & 181 & 52 & 3.5$\times$ \\

(mono) \texttt{APDB-xy}
& \underline{50000}\tnote{a} & 719 & 136 & 5.3$\times$ \\

(mono) \texttt{APDB-yx}
& 34947 & 568 & 133 & 4.3$\times$ \\

\midrule

(non-mono) \texttt{rAPDB-xy}
& 3008 & 92 & 18 & 5.1$\times$ \\

(non-mono) \texttt{rAPDB-yx}
& \cellcolor{blue!5}873 & 26 & \cellcolor{blue!5}6 & 4.3$\times$ \\

(non-mono) \texttt{rAPDB-xy-ada}
& 3040 & 97 & 22 & 4.4$\times$ \\

(non-mono) \texttt{rAPDB-yx-ada}
& \cellcolor{blue!5}877 & 29 & 10 & 2.9$\times$ \\

(non-mono) \texttt{APDB-xy}
& 3026 & 92 & 18 & 5.1$\times$ \\

(non-mono) \texttt{APDB-yx}
& \cellcolor{blue!5}871 & 25 & \cellcolor{blue!5}6 & 4.2$\times$ \\

\bottomrule
\end{tabular}
\begin{tablenotes}
\footnotesize
\item[a]The method reaches the maximum iteration limit with the relative error $6\times 10^{-3}$.
\end{tablenotes}
\end{threeparttable}
\end{table}

\subsection{Kernel matrix learning: $\ell_2$-norm soft margin SVM}
Suppose we are given a training set consisting of feature vectors
$\{a_i\}_{i=1}^n \subset \mathbb{R}^d$
and the corresponding labels
$\{b_i\}_{i=1}^n \subset \{-1,+1\}$.
Consider $m \in \mathbb{Z}_{+}$ different embeddings of the data and let
$K_i \in \mathbb{S}_+^n$
be the corresponding kernel matrix for $i\in [m]$.
The objective of kernel matrix learning (KML) is to learn a kernel matrix $K$ belonging to a class of kernel matrices
$\mathcal{K} \subset \mathbb{S}_+^n$
such that it minimizes the training error of an $\ell_2$-norm soft-margin SVM over
$K \in \mathcal{K}$ --- see \cite{lanckriet2004learning} for more details.
In this setting, it is assumed that the class $\mathcal{K}$ is described as a convex set generated by
$\{K_i\}_{i=1}^m$, 
\begin{equation*}
\mathcal{K}
\triangleq
\left\{
\sum_{i=1}^m y_i K_i
:\;
y_i \ge 0,\ i\in [m]
\right\}
\subset \mathbb{S}_+^n.
\end{equation*}
Then, learning over the class $\mathcal{K}$ is formulated as
\begin{equation} \label{eq:kml_pro}
\min_{\substack{y \in \mathbb{R}_+^m:\\ \langle r,y\rangle = c}}
\;
\max_{\substack{x \ge 0 ,\\ \langle b,x\rangle = 0}}
\;
2x^\top 1_n
-
\sum_{i=1}^m y_i\, x^\top H(K_i)x
-
\lambda \|x\|_2^2,
\end{equation}
where $c>0$ and $\lambda \ge 0$ are model parameters,
$y = [y_i]_{i\in [m]}$,
$r = [\operatorname{trace}(K_i)]_{i\in [m]}$,
$b = [b_j]_{j\in [n]}$,
and
\[
H(K_i) \triangleq \operatorname{diag}(b)\, K_i\, \operatorname{diag}(b),\qquad\forall~i\in[m].
\]
\sa{We consider three kernel functions ($m=3$): polynomial kernel function $\phi_1(a, a') = (1 + a^\top a')^2$,
Gaussian kernel function
$\phi_2(a,a') = \exp \bigl(-0.5(a- a')^\top (a- a')/0.1\bigr)$,
and the linear kernel function
$\phi_3(a,a') = a^\top a'$
to compute $K_1,K_2,K_3 \in \mathbb{S}_+^n$, respectively, i.e., for $i\in\{1,2,3\}$, $[K_i]_{k_1 k_2}= \phi_i(a_{k_1}, a_{k_2})$ for $k_1,k_2\in [n]$, where $n$ denotes the number of data points.} We set $\lambda = 1$ and $c = \sum_{i=1}^3 r_i$, where $r_i = \operatorname{trace}(K_i)$ for $i=1,2,3$. The kernel matrices are normalized as in~\cite{lanckriet2004learning}; thus,
$\operatorname{diag}(K_i) = 1_n$ and $r_i = n$ for each $i=1,2,3$.
Note that \eqref{eq:kml_pro} is equivalent to
\begin{equation} \label{eq:kml_pro1} \tag{min-max-KML}
\min_{\substack{\ x \ge 0\\ \langle b,x\rangle = 0}}
\;
\max_{\substack{y \in \mathbb{R}_+^m:\\ \langle 1_m,y\rangle = 1}}
\;
\lambda \|x\|_2^2 - 2x^\top 1_n + \sum_{i=1}^m \frac{c}{r_i}\, y_i\, x^\top H(K_i)x,
\end{equation}
which is a special case of minimax optimization problems with the strong convexity parameter $\mu=2$ --since we set $\lambda=1$. In \cite{lanckriet2004learning}, \eqref{eq:kml_pro1} is shown to be equivalently represented as a QCQP. 
\begin{figure}[t]
\centering
\includegraphics[width=\textwidth]{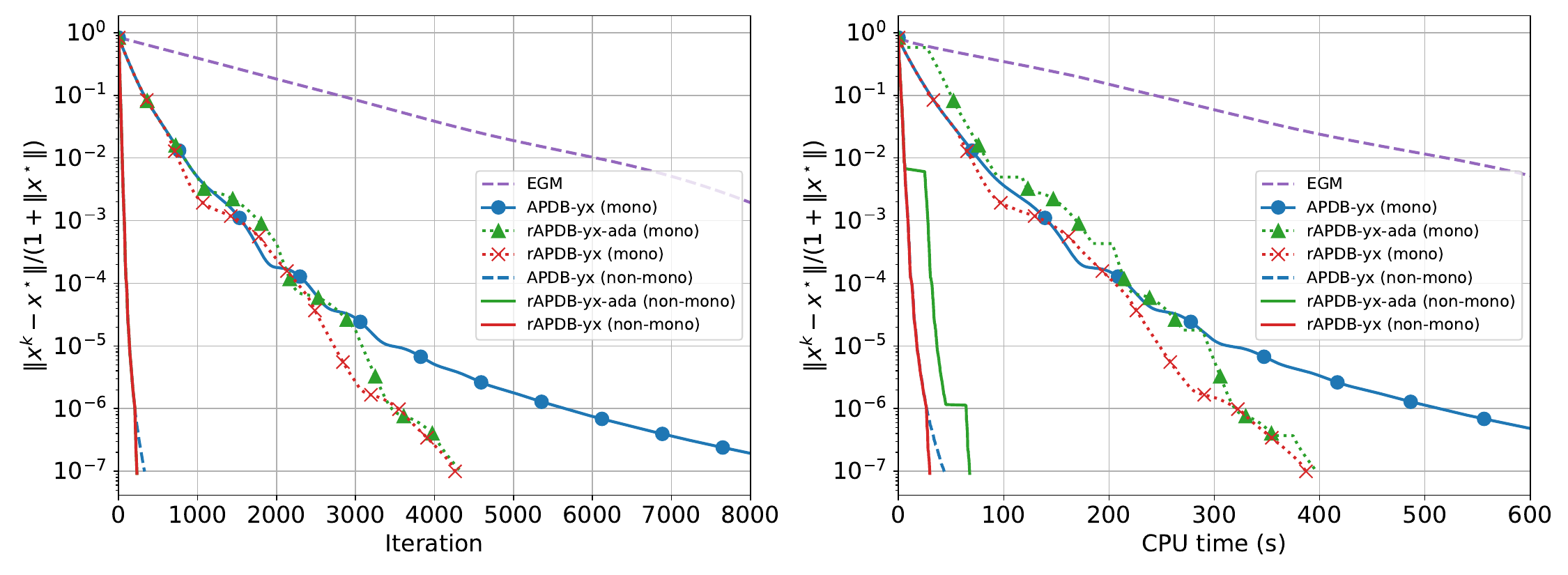}
\caption{\small Convergence performance of \rapdb, \rapdbada, \texttt{APDB}, and \texttt{EGM} on the kernel matrix learning problem instances with \(n=4,000\), measured by the relative error against iteration count and CPU time.}
\label{fig:kml_n4000}
\end{figure}

\begin{table}[t]
\centering
\caption{Iteration counts and wall-clock time for kernel matrix learning with \(n=4000\) and \(\varepsilon=10^{-7}\).}
\label{tab:timecomparison_kml_n4000}
\begin{threeparttable}
\setlength{\tabcolsep}{9pt}
\renewcommand{\arraystretch}{1.15}
\begin{tabular}{llcccc}
\toprule & {Method}
& Iter & \makecell{CPU \\Time (s)} & \makecell{GPU \\Time (s)} & \makecell{Speedup of\\ GPU over CPU} \\
\midrule
& MOSEK                    & -- & 117.8 & -- & --  \\
\midrule
& EGM                      & \underline{9999}\tnote{a} & 851.8  & 438.5 & 1.9$\times$\\
\midrule
& (mono) \texttt{rAPDB-yx}          & 4261 & 387.1 & 98.5 & 3.9$\times$ \\
& (mono) \texttt{rAPDB-yx-ada}&  4331 & 396.9 & 202.7 & 2.0$\times$   \\
& (mono) \texttt{APDB-yx}           & 9188  & 834.1 & 212.5 & 3.9$\times$  \\
& (non-mono) \texttt{rAPDB-yx}          & \cellcolor{blue!5}232  & 29.8 & \cellcolor{blue!5}10.4 & 2.9$\times$ \\
& (non-mono) \texttt{rAPDB-yx-ada}& \cellcolor{blue!5}232 & 67.6   & 48.1 & 1.4$\times$\\
& (non-mono) \texttt{APDB-yx}    & 329 & 43.6   & \cellcolor{blue!5}14.9 & 2.9$\times$ \\
\bottomrule
\end{tabular}
\begin{tablenotes}
\footnotesize
\item[a]The method reaches the maximum iteration limit with the relative error $5.2\times 10^{-4}$.
\end{tablenotes}
\end{threeparttable}
\end{table}

\paragraph{Implementation details} We consider the data set \texttt{SIDO0} \cite{guyon2008sido} \sa{and we conducted experiments with two different problem sizes: in the first test we sampled $n=4,000$ data points, and in the second test we sampled $n=10,000$ points from the same dataset.} For both experiments, we perform normalization on the data such that each feature column is mean-centered and divided by its standard deviation. 

We use MOSEK to compute the optimal solution $x^\star$. Such an optimal solution $x^\star$ is then used to calculate the relative error ${\| x^k - x^\star\| }/{(1 + \| x^\star\|)}$ to monitor the performance of EGM, \texttt{APDB}, \texttt{rAPDB}, and \texttt{rAPDB-ada}. Here, we only employ the yx-variants since they perform better than the xy-variants. We set the maximal iteration to be 9999 (resp. 19999) and terminate algorithms if the relative error ${\| x^k - x^\star\| }/{(1 + \| x^\star\|)}$ is less than $10^{-7}$ (resp. $10^{-6}$) for $n=4000$ (resp. $n=10000$). For EGM, we tune the step sizes and report the best-performing results. The backtracking parameter is fixed at \(\eta=0.7\) for all \texttt{APDB}-type methods. For variants of \texttt{APDB-yx}-type methods, we do not impose the set \(\mathcal{B}\) in \eqref{eq: apdb-yx-y} and choose \(c_\alpha = 0.4\) and \(\delta = 0.5\). We test both monotone and non-monotone step-size update strategies. The restart frequencies for \texttt{rAPDB-yx} (mono) and \texttt{rAPDB-yx} (non-mono) are set to \(600\), and \(200\), respectively. For the adaptive restart variants of \texttt{APDB}, we fix \(\xi = 0.04\). For \texttt{rAPDB-yx-ada}, we first compute \(\mathcal{G}_{\xi}\) at iteration \(k=50\) without restarting. We then check every \(200\) iterations in the non-monotone case, and every \(1000\) iterations in the monotone case, whether \(\mathcal{G}_{\xi}\) has decreased by at least half compared with its value at the last restart point. If no restart has occurred, the comparison is made with the value of \(\mathcal{G}_{\xi}\) at iteration \(k=50\). We adopted the OSQP solver \cite{stellato2018osqp} to solve the strongly convex QP subproblems arising in adaptive restarts. In the implementation of restart-type algorithms, namely \texttt{rAPDB-yx} and \texttt{rAPDB-yx-ada}, we use the \emph{last iterate} to restart the algorithms.

\begin{figure}[htb]
\centering
\includegraphics[width=\textwidth]{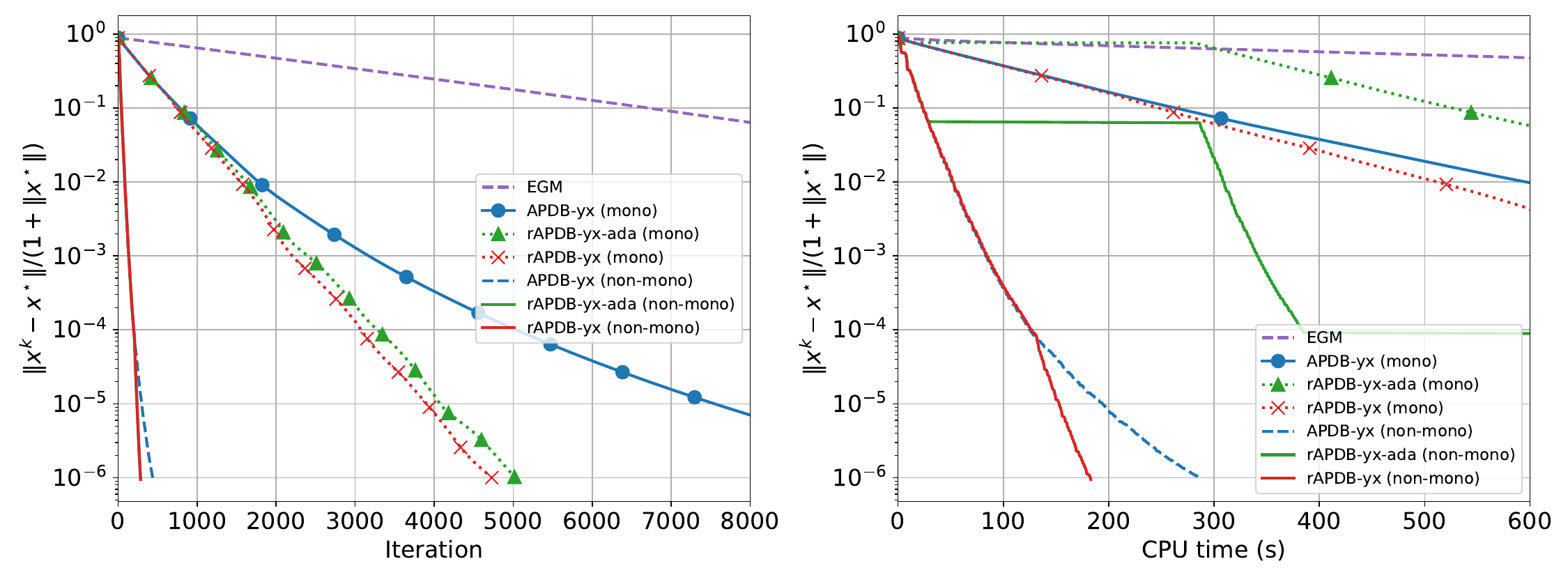}
\caption{\small Convergence performance of \rapdb, \rapdbada, \texttt{APDB}, and \texttt{EGM} on the kernel matrix learning problem instances with \(n=10,000\), measured by the relative error against iteration count and CPU time.}
\label{fig:kmln10000}
\end{figure}

\begin{table}[h]
\centering
\caption{Iteration counts and wall-clock time for kernel matrix learning with \(n=10000\) and \(\varepsilon=10^{-6}\).}
\label{tab:timecomparison_kml_n10000}
\begin{threeparttable}
\setlength{\tabcolsep}{9pt}
\renewcommand{\arraystretch}{1.15}
\begin{tabular}{llcccc}
\toprule & {Method}
& Iter & \makecell{CPU \\Time (s)} & \makecell{GPU \\Time (s)} & \makecell{Speedup of\\ GPU over CPU} \\
\midrule
& MOSEK                    & -- & 482 & -- & --  \\
\midrule
& EGM                      & \underline{19999}\tnote{a} & 5734  & 3260 & 1.8$\times$\\
\midrule
& (mono) \texttt{rAPDB-yx}          & 4731 & 1565 & 410 & 3.8$\times$ \\
& (mono) \texttt{rAPDB-yx-ada}&  5026 & 3199 & 1982 & 1.6$\times$   \\
& (mono) \texttt{APDB-yx}           & 10949  & 3651 & 957 & 3.8$\times$  \\
& (non-mono) \texttt{rAPDB-yx}          & \cellcolor{blue!5}284  & 183 & \cellcolor{blue!5}46 & 4.0$\times$ \\
& (non-mono) \texttt{rAPDB-yx-ada}& \cellcolor{blue!5}284 & 692   & 558 & 1.2$\times$\\
& (non-mono) \texttt{APDB-yx}    & 440 & 287   & \cellcolor{blue!5}74 & 3.9$\times$ \\
\bottomrule
\end{tabular}
\begin{tablenotes}
\footnotesize
\item[a]The method reaches the maximum iteration limit with the relative error $1.2\times 10^{-3}$.
\end{tablenotes}
\end{threeparttable}
\end{table}
\newpage
\paragraph{Experiment results} Figure~\ref{fig:kml_n4000} and \Cref{fig:kmln10000} display the relative error 
against the iteration count and CPU time for the kernel matrix learning problem  instances with \(n=4000\) and \(n=10000\), respectively.
\Cref{tab:timecomparison_kml_n4000} and \Cref{tab:timecomparison_kml_n10000} summarize the corresponding iteration counts, CPU wall-clock time, GPU wall-clock time, and GPU-over-CPU speedups.
The results are consistent across the two problem sizes. First, EGM is substantially slower than the APDB-type methods and reaches the maximum iteration limit in both cases, indicating that the tuned extragradient baseline is less effective for these KML instances. Second, the non-monotone step-size search dramatically improves the performance of APDB-type methods: for example, when \(n=4000\), \texttt{rAPDB-yx} decreases from \(4261\) iterations in the monotone setting to \(232\) iterations in the non-monotone setting; when \(n=10000\), it decreases from \(4731\) to \(284\) iterations. Third, fixed-frequency restarts provide a clear acceleration over APDB in both the monotone and non-monotone settings. In particular, in the non-monotone setting, \texttt{rAPDB-yx} reduces the iteration count from \(329\) for \texttt{APDB-yx} to \(232\) when \(n=4000\), and from \(440\) for \texttt{APDB-yx} to \(284\) when \(n=10000\). On the other hand, adaptive restarts achieve similar iteration counts but incur additional overhead from computing the smoothed duality gap and solving the auxiliary quadratic subproblems. Consequently, \texttt{rAPDB-yx} with non-monotone step-size search gives the best overall wall-clock performance. Finally, GPU acceleration is effective for the APDB-type methods, especially for APDB and APDB with the fixed-frequency restart, with speedups around \(3\times\)--\(4\times\) in the larger instance.

\section{Concluding Remarks}

We developed restarted accelerated primal-dual algorithms with adaptive
stepsize search for convex nonlinear conic programs, with convex QCQPs as a
special case. The proposed framework equips APDB with fixed-frequency and
adaptive restarts, and supports both monotone and non-monotone stepsize search
strategies. Since the methods rely only on first-order information,
matrix-vector products, and simple projection-type operations, they are well
suited for large-scale and GPU-accelerated computation.

Under metric subregularity of the KKT map, we established a quadratic growth
property for a self-centered smoothed duality gap and used it to prove global
linear convergence of the restarted methods. We also provided sufficient
conditions for metric subregularity for nonlinear programs over polyhedral
cones, without requiring strict complementarity or multiplier uniqueness. Numerical experiments on random QCQPs and kernel matrix learning instances
demonstrate the effectiveness of the proposed methods. Future work includes designing cheaper
adaptive restart criteria and incorporating other practical acceleration techniques.

\bibliographystyle{plain}
\bibliography{references}

\section{Appendix}
\subsection{Proof of \Cref{thm:jongshi}}
\label{sec:jongshi-proof}
Since \(\Kcone\subset\reals^m\) is polyhedral, there exists \(A\in\R^{\bar m\times m}\) for some $\bar m\in \integers_+$ such that
\begin{equation}
\label{eq:cone-representation}
\Kcone=\{y\in\R^m:Ay\ge0\};
\end{equation}
therefore, for any $x\in\reals^n$,
\begin{equation}
\label{eq:conic-to-scalar}
g(x)\in-\Kcone
\quad\Longleftrightarrow\quad
Ag(x)\le0.
\end{equation}
Define $G(x)\triangleq Ag(x)$.
Thus, we obtain another representation of the feasible set:
\begin{equation*}
\feas=\{x\in \cX:\ h(x)=0,\ G(x)\le0\}.
\end{equation*}
The dual multiplier for the componentwise inequality system \(G(x)\le0\) will be denoted by \(\rho\ge0\), and the corresponding conic multiplier is
$\lambda=A^\top \rho$.
Indeed, the dual cone can be explicitly represented as
\begin{equation}
\label{eq:dual-cone-representation}
\Kcone^*=\{A^\top \rho:\rho\ge0\}.
\end{equation}

We first verify the boundedness of \(\cM(\bar x)\). Suppose, to the contrary, that \(\cM(\bar x)\) is unbounded. Then there exists \(\{(v^r,\lambda^r)\}_{r\in \integers_+}\subset \cM(\bar x)\) such that as $r\to\infty$, 
\begin{equation*}
\tau_r\triangleq\| (v^r,\lambda^r)\|\to\infty.
\end{equation*}
For each \(r\in\integers_+\), define
\begin{equation}
\label{eq:sr-def-boundedness}
s^r\triangleq-\nabla_x\Phi(\bar x,v^r,\lambda^r)\in \cN_{\cX}(\bar x).
\end{equation}
\sa{Note that $\{(v^r,\lambda^r)/\tau_r\}$ is a bounded sequence, which implies that $\{s^r/\tau_r\}$ is also a bounded sequence; indeed, \eqref{eq:sr-def-boundedness} and \eqref{eq:L-def} imply that
$s^r/\tau_r
= -\nabla f(\bar x)/\tau_r
-\nabla h(\bar x)^\top v^r/\tau_r
-\nabla g(\bar x)^\top \lambda^r/\tau_r$.
Hence,} after dividing \eqref{eq:sr-def-boundedness} by \(\tau_r\) and passing to a subsequence, we may assume
\begin{equation*}
\frac{v^r}{\tau_r}\to \bar v,
\qquad
\frac{\lambda^r}{\tau_r}\to \bar\lambda,
\qquad
\frac{s^r}{\tau_r}\to \bar s,\quad\mbox{as}\quad r\to\infty;
\end{equation*}
moreover, we also have
\begin{equation}
\label{eq:normalized-nonzero}
\|(\bar v,\bar\lambda)\|=1.
\end{equation}
Since \(\cN_{\cX}(\bar x)\) and \(\Kcone^*\) are closed cones, and since \(\langle \lambda^r,g(\bar x)\rangle=0\), we get
\begin{equation*}
\bar s\in \cN_{\cX}(\bar x),
\qquad
\bar\lambda\in\Kcone^*,
\qquad
\langle \bar\lambda,g(\bar x)\rangle=0.
\end{equation*}
Dividing the identity
\begin{equation*}
\nabla f(\bar x)+\nabla h(\bar x)^\top  v^r+\nabla g(\bar x)^\top \lambda^r+s^r=0
\end{equation*}
by \(\tau_r\), and passing to the limit, gives
\begin{equation}
\label{eq:normalized-stationarity}
\nabla h(\bar x)^\top \bar v+
\nabla g(\bar x)^\top \bar\lambda+\bar s=0.
\end{equation}
Let \(d\in \cT_{\cX}(\bar x)\) be the direction from \eqref{eq:relative-mfcq-direction}; therefore, using \(\nabla h(\bar x)d=0\), \eqref{eq:normalized-stationarity} implies that
\begin{equation}
\label{eq:normalized-pairing}
0
=
\langle \bar\lambda,\nabla g(\bar x)d\rangle+
\langle \bar s,d\rangle.
\end{equation}
Since \(\bar s\in \cN_{\cX}(\bar x)\) and \(d\in \cT_{\cX}(\bar x)\), we have
\begin{equation}
\label{eq:sbar-d-nonpositive}
\langle \bar s,d\rangle\le0.
\end{equation}
Furthermore, according to \eqref{eq:relative-mfcq-direction}, we have
$y\triangleq g(\bar x)+\nabla g(\bar x)d\in -\operatorname{int}(\Kcone)$.
Thus, from \(\langle \bar\lambda,g(\bar x)\rangle=0\),
\begin{equation}
\label{eq:lambda-gradg-pairing}
\langle \bar\lambda,\nabla g(\bar x)d\rangle
=
\langle \bar\lambda,y\rangle.
\end{equation}
If \(\bar\lambda\ne0\), then \(\bar\lambda\in\Kcone^*\setminus\{0\}\) and \(y\in-\operatorname{int}(\Kcone)\), so
\begin{equation}
\label{eq:strict-negative-lambda}
\langle \bar\lambda,y\rangle<0.
\end{equation}
Indeed, every nonzero vector in \(\Kcone^*\) is strictly positive on \(\operatorname{int}(\Kcone)\); hence it is strictly negative on \(-\operatorname{int}(\Kcone)\). Combining \eqref{eq:normalized-pairing}, \eqref{eq:sbar-d-nonpositive}, \eqref{eq:lambda-gradg-pairing}, and \eqref{eq:strict-negative-lambda} yields a contradiction unless \(\bar\lambda=0\). Hence, \(\bar\lambda=0\). Then, \eqref{eq:normalized-stationarity} reduces to
\begin{equation*}
\nabla h(\bar x)^\top \bar v+\bar s=0,
\qquad
\bar s\in \cN_{\cX}(\bar x).
\end{equation*}
By the 
nondegeneracy condition in \eqref{eq:relative-equality-nondegeneracy}, \(\bar v=0\) and \(\bar s=0\). This contradicts \eqref{eq:normalized-nonzero}; therefore, \(\cM(\bar x)\) is bounded. Next, we prove the desired result by contradiction using the same proof idea in \cite[Proposition 6.2.7]{facchinei2003finite}.

\textbf{Contradiction sequence.}
Suppose the desired error bound is false. Then, there exist sequences $\{x^k\}\subset\cX$, $\{v^k\}\subset\reals^p$ and $\{\lambda^k\}\subset\cK^*$ such that $x^k\to\bar x$ as $k\to\infty$, and that
\begin{equation*}
t_k\to0,
\qquad
\frac{r(x^k,v^k,\lambda^k)}{t_k}\to 0,
\end{equation*}
where 
\begin{equation}
\label{eq:tk-def}
t_k\triangleq\|x^k-\bar x\|+\dist((v^k,\lambda^k),\cM(\bar x)),\quad\forall~k\in\integers_+.
\end{equation}
Thus, from the definition of the residual function $r(\cdot)$, we get
\begin{equation*}
\frac{\dist\bigl(0,\nabla_x\Phi(x^k,v^k,\lambda^k)+\cN_{\cX}(x^k)\bigr)}{t_k}\to0,\qquad
\frac{\|h(x^k)\|}{t_k}\to0,\qquad 
\frac{\dist\bigl(0,-g(x^k)+\cN_{\cK^*}(\lambda^k)\bigr)}{t_k}\to0.
\end{equation*}
The boundedness of \(\cM(\bar x)\) and $\dist((v^k,\lambda^k),\cM(\bar x))\to0$ imply that the sequence \((v^k,\lambda^k)\) is bounded. 

\medskip
\noindent
\textbf{The choice of \(s^k\).}
For each \(k\in\integers_+\), choose \(s^k\in \cN_{\cX}(x^k)\) such that
\begin{equation*}
s^k=\operatorname*{argmin}_{s\in \cN_{\cX}(x^k)}
\|\nabla_x\Phi(x^k,v^k,\lambda^k)+s\|.
\end{equation*}
The unique minimizer exists because \(\cN_{\cX}(x^k)\) is a closed convex cone. Define
\begin{equation*}
u^k\triangleq\nabla_x\Phi(x^k,v^k,\lambda^k)+s^k.
\end{equation*}
Then our choice of $s^k$ implies that
\begin{equation*}
\|u^k\|=
\dist\bigl(0,\nabla_x\Phi(x^k,v^k,\lambda^k)+\cN_{\cX}(x^k)\bigr).
\end{equation*}
Moreover, define $w^k\triangleq h(x^k)$.
Since the residual $r(x^k,v^k,\lambda^k)=o(t_k)$, we get
\begin{equation}
\label{eq:first-residual-asymptotic}
\|u^k\|+\|w^k\|+
\dist\bigl(0,-g(x^k)+\cN_{\cK^*}(\lambda^k)\bigr)
=o(t_k).
\end{equation}

\medskip
\noindent
\textbf{The conic residual vs. the componentwise inequality residual.}
We next justify that the conic residual controls the complementarity residual for a componentwise inequality system. 
Let $\delta:\cX\times\reals^m\to\reals\cup\{+\infty\}$ such that $\delta(x,\lambda)=+\infty$ if \(\cN_{\cK^*}(\lambda)=\emptyset\); otherwise,
\begin{equation*}
\delta(x,\lambda)\triangleq\dist\bigl(0,-g(x)+\cN_{\cK^*}(\lambda)\bigr).
\end{equation*}
If \(\delta(x,\lambda)<\infty\), then \(\cN_{\cK^*}(\lambda)\ne\emptyset\), and hence \(\lambda\in\Kcone^*\). Therefore, by \eqref{eq:dual-cone-representation}, there exists \(\rho\ge0\) such that
\begin{equation}
\label{eq:rho-rep}
A^\top \rho=\lambda.
\end{equation}
Let \(z\in \cN_{\cK^*}(\lambda)\) be such that
\begin{equation}
\label{eq:z-minimizer}
\delta(x,\lambda)=\|-g(x)+z\|.
\end{equation}
For a closed convex cone \(\Kcone^*\),
\begin{equation}
\label{eq:normal-dual-cone}
\cN_{\cK^*}(\lambda)=(\Kcone^*)^\circ\cap\lambda^\perp,
\end{equation}
where we adopt the convention $0^\perp=\mathbb{R}^m$; hence, $\cN_{\cK^*}(0)=(\Kcone^*)^\circ$. Since \((\Kcone^*)^\circ=-\Kcone\), \eqref{eq:normal-dual-cone} implies
$z\in-\Kcone$ and $\langle z,\lambda\rangle=0$. By \eqref{eq:cone-representation}, \(z\in-\Kcone\) is equivalent to $Az\le0$. Moreover, using \eqref{eq:rho-rep}, we get
\begin{equation}
\label{eq:z-lambda-complementarity}
0=\langle z,\lambda\rangle
=
\langle z,A^\top \rho\rangle
=
\langle Az,\rho\rangle
=
\sum_{i\in [\bar m]}\rho_i(Az)_i.
\end{equation}
Since \(\rho_i\ge0\) and \((Az)_i\le0\) for all $i\in [\bar m]$, each term in \eqref{eq:z-lambda-complementarity} must be zero: $\rho_i(Az)_i=0$ for all $i\in [\bar m]$.
Therefore,
\begin{equation*}
\rho\ge0,
\qquad
-Az\ge0,
\qquad
\rho_i(-Az)_i=0\quad\forall i\in[\bar m];
\end{equation*}
hence, for \(z\in \cN_{\cK^*}(\lambda)\) satisfying \eqref{eq:z-minimizer}, we have
\begin{equation}
\label{eq:min-rho-Az-zero}
\min\{\rho,-Az\}=0.
\end{equation}
Now compare \(-Ag(x)\) and \(-Az\). Since 
\(\min\{\rho,\cdot\}\) is $1$-Lipschitz for every fixed \(\rho\in\reals^{\bar m}\), using \eqref{eq:min-rho-Az-zero},
\begin{equation*}
\|\min\{\rho,-Ag(x)\}\|
\le
\|A(z-g(x))\|
\le
\|A\|\,\|z-g(x)\|.
\end{equation*}
By \eqref{eq:z-minimizer}, $\|z-g(x)\|
=\delta(x,\lambda)$.
Thus,
\begin{equation*}
\|\min\{\rho,-Ag(x)\}\|
\le
\|A\|\,
\dist\bigl(0,-g(x)+\cN_{\cK^*}(\lambda)\bigr).
\end{equation*}

Next, we argue that for $\{\lambda^k\}$ bounded, we can choose \(\{\rho^k\}\subset\reals^{\bar m}_+\) bounded. Let \(B\triangleq A^\top \). The columns of \(B\) generate \(\Kcone^*\), i.e., $\Kcone^*=\operatorname{cone}\{b_1,\ldots,b_{\bar m}\}$ is the conic hull of $\{b_i\}_{i\in[\bar m]}\subset\reals^m$, where \(b_i\) denotes the \(i\)-th column of \(B\) for $i\in[\bar m]$. Conic Carath\'eodory's theorem implies that for any \(\lambda\in\Kcone^*\), there exists $I\subset [\bar m]$ such that \(\{b_i\}_{i\in I}\) is linearly independent and that for some $\rho_i\ge0$ for $i\in I$, $\lambda$ can be represented as $\lambda=\sum_{i\in I}\rho_i b_i$.
Let \(B_I\) denote the matrix with columns \(b_i\) for \(i\in I\), then
$\lambda=B_I\rho_I$ for some $\rho_I\geq 0$; therefore,
$\rho_I=B_I^\dagger\lambda$, where \(B_I^\dagger\) is the Moore--Penrose inverse, i.e., $B_I^\dagger=(B_I^\top B_I)^{-1}B_I^\top$. Hence,
\begin{equation*}
\|\rho_I\|\le \|B_I^\dagger\|\,\|\lambda\|.
\end{equation*}
There are only finitely many such index sets \(I\subseteq\{1,\ldots,\bar m\}\) such that \(B_I\) has linearly independent columns; consequently, we get
\begin{equation*}
\gamma\triangleq\max_{I\subset[\bar m]}\{\|B_I^\dagger\|:~\det(B_I^\top B_I)\neq 0\}<\infty.
\end{equation*}
Thus, for every \(\lambda\in\Kcone^*\), there exists \(\rho\ge0\) such that
$A^\top \rho=\lambda$ and $\|\rho\|\le \gamma\|\lambda\|$ --here, the important point is that $\gamma$ is independent of $\lambda$. 
Now given \((x^k,\lambda^k)\), we can choose \(\rho^k\ge0\) such that
\begin{equation}
\label{eq:rho-bound}
A^\top \rho^k=\lambda^k,
\qquad
\|\rho^k\|\le\gamma\|\lambda^k\|,
\end{equation}
and
\begin{equation}
\label{eq:vk-control-before-def}
\|\min\{\rho^k,-Ag(x^k)\}\|
\le
\|A\|\,
\dist\bigl(0,-g(x^k)+\cN_{\cK^*}(\lambda^k)\bigr),\qquad\forall~k\in\integers_+.
\end{equation}
Since \(\{\lambda^k\}\) is bounded, \eqref{eq:rho-bound} implies that \(\{\rho^k\}\) is bounded as well. Define
\begin{equation*}
\eta^k\triangleq\min\{\rho^k,-G(x^k)\},\qquad\forall~k\in\integers_+.
\end{equation*}
Then, using \eqref{eq:first-residual-asymptotic} and \eqref{eq:vk-control-before-def}, we get $\frac{\eta^k}{t_k}\to0$. Moreover, let
\begin{equation}
\label{eq:hat-u-k}
\widehat u^k\triangleq\nabla f(x^k)+\nabla h(x^k)^\top v^k+\nabla G(x^k)^\top \rho^k+s^k,\qquad\forall~k\in\integers_+.
\end{equation}
Since $\nabla G(x)^\top \rho=\nabla g(x)^\top A^\top \rho$, and \(A^\top \rho^k=\lambda^k\) for $k\in\integers_+$, we have
\begin{equation*}
\widehat u^k=\nabla_x\Phi(x^k,v^k,\lambda^k)+s^k=u^k,\qquad\forall~k\in\integers_+.
\end{equation*}
Hence, by \eqref{eq:first-residual-asymptotic},
\begin{equation}
\label{eq:uhat-w-v-small}
\frac{\widehat u^k}{t_k}=\frac{u^k}{t_k}\to0,
\qquad
\frac{w^k}{t_k}\to0,
\qquad
\frac{\eta^k}{t_k}\to0,\qquad\mbox{as}\quad k\to\infty.
\end{equation}

\medskip
\noindent
\textbf{Non-asymptotic complementarity system.}
For $k\in\integers_+$, define
\begin{equation}
\label{eq:tilde-rho-sigma}
\tilde\rho^k\triangleq\rho^k-\eta^k,\qquad \sigma^k\triangleq\widehat u^k-\nabla G(x^k)^\top \eta^k,
\end{equation}
where $\widehat{u}^k$ is defined in \eqref{eq:hat-u-k}. The sequence $\{\tilde\rho^k\}$ satisfies
\begin{equation}
\label{eq:tilderho-complementarity}
0\le \tilde\rho^k\perp G(x^k)+\eta^k\le0,\qquad\forall~k\in\integers_+.
\end{equation}
Indeed, note that $\tilde\rho^k=\max\{0,\rho^k+G(x^k)\}\geq 0$ and $G(x^k)+\eta^k=\min\{0,\rho^k+G(x^k)\}\leq 0$; therefore, we can conclude that $\tilde\rho^k\perp G(x^k)+\eta^k$. Furthermore, 
\eqref{eq:hat-u-k} and \eqref{eq:tilde-rho-sigma} together imply that
\begin{equation}
\label{eq:corrected-stationarity}
\nabla f(x^k)-\sigma^k
+
\nabla h(x^k)^\top v^k
+
\nabla G(x^k)^\top \tilde\rho^k
+s^k=0,\qquad\forall~k\in\integers_+.
\end{equation}
Since \(\{\nabla G(x^k)\}_{k\in\integers_+}\) is bounded, \(\eta^k=o(t_k)\), and \(\widehat u^k=o(t_k)\), we also get
\begin{equation}
\label{eq:sigma-small}
\sigma^k=o(t_k).
\end{equation}
Moreover, as shown earlier, the multiplier sequence \(\{(v^k,\lambda^k)\}\) is bounded, \(\{\rho^k\}\) is bounded, and \(\eta^k\to0\); therefore, it follows that the sequence \(\{(v^k,\tilde\rho^k)\}\) is bounded as well. Moreover, \eqref{eq:corrected-stationarity} implies that \(\{s^k\}\) is bounded, where we used the boundedness of \(\{(v^k,\tilde\rho^k)\}\), the continuity of \(\nabla f\), \(\nabla h\), \(\nabla G\), and \(\sigma^k\to0\). Thus, we can conclude that there exists a \textit{bounded} set \(S\) such that
\begin{equation}
\label{eq:bounded-S}
(v^k,\tilde\rho^k,s^k)\in S
\qquad \forall k\in\integers_+.
\end{equation}

\medskip
\noindent
\textbf{Inactive constraints of the componentwise inequality system.}
Let $I_G(\bar x)\subseteq [\bar m]$ denote the active set for the componentwise inequality system, i.e.,
\begin{equation*}
I_G(\bar x)\triangleq\{i\in[\bar m]:G_i(\bar x)=0\}.
\end{equation*}
Fix arbitrary \(i\notin I_G(\bar x)\). By definition, \(G_i(\bar x)<0\), which implies that for all sufficiently large \(k\in\integers_+\), $-G_i(x^k)\ge \eta_i>0$ for some \(\eta_i>0\). Moreover, since
\begin{equation*}
\eta_i^k=\min\{\rho_i^k,-G_i(x^k)\}\to0
\end{equation*}
and \(\rho_i^k\ge0\), it follows that \(\rho_i^k\to0\). Therefore, for all sufficiently large \(k\), we have $\rho_i^k<-G_i(x^k)$, which yields
$\eta_i^k=\rho_i^k$ and $\tilde\rho_i^k=\rho_i^k-\eta_i^k=0$. Thus, after discarding finitely many elements from the sequence, we can safely assume that
\begin{equation}
\label{eq:tilderho-inactive-zero}
\tilde\rho_i^k=0
\qquad
\forall i\notin I_G(\bar x),\quad\forall k\in\integers_+.
\end{equation}

\medskip
\noindent
\textbf{Controlling the distance to the multiplier set.}
\sa{The proof of \cite[Proposition 6.2.7]{facchinei2003finite} invokes \cite[Corollary 3.2.5]{facchinei2003finite} to have a control on the distance of $(v^k,\tilde\rho^k)$ to the multiplier set $\cM(\bar x)$; however, due to conic constraints and $\cX\neq\reals^n$ in our setting, we cannot directly use the same argument. That is why we first invoke \cite[Corollary 3.2.5]{facchinei2003finite} for a lifted polyhedral set and then project back. This is the extra step needed to be able to provide guarantees in terms of the non-lifted multiplier set \(\cM(\bar x)\).}

\smallskip
\noindent
\textbf{(a.) The lifted set.}
Define
\begin{equation*}
\cM_{A}^{\rm lift}(\bar x)\triangleq
\left\{(v,\rho,s):
\begin{array}{l}
 s\in \cN_{\cX}(\bar x),\\[0.2em]
 \nabla f(\bar x)+\nabla h(\bar x)^\top  v+\nabla G(\bar x)^\top \rho+s=0,\\[0.2em]
 \rho\ge0,
 \quad
 \rho_i=0\ \text{if }i\notin I_G(\bar x)
\end{array}
\right\}.
\end{equation*}
Since \(\cX\) is polyhedral, \(\cN_{\cX}(\bar x)\) is a polyhedral cone, so \(\cM_{A}^{\rm lift}(\bar x)\) is a polyhedron. We also define the non-lifted multiplier set corresponding to the componentwise inequality system, $G(x)\leq 0$, as follows:
\begin{equation*}
\cM_A(\bar x)\triangleq
\left\{(v,\rho):
\begin{array}{l}
 \rho\ge0,
 \quad \rho_i=0\ \text{if }i\notin I_G(\bar x),\\[0.2em]
 \nabla f(\bar x)+\nabla h(\bar x)^\top v+\nabla G(\bar x)^\top \rho\in -\cN_{\cX}(\bar x)
\end{array}
\right\}.
\end{equation*}
Next, we consider
\begin{equation}
\label{eq:projection-map}
T':(v,\rho)\mapsto(v,A^\top \rho)
\end{equation}
that maps \(\cM_A(\bar x)\) onto \(\cM(\bar x)\). Indeed, if \((v,\rho)\in \cM_A(\bar x)\), then \(\lambda=A^\top \rho\in\Kcone^*\),
\begin{equation*}
\nabla_x\Phi(\bar x,v,\lambda)=\nabla f(\bar x)+\nabla h(\bar x)^\top v+\nabla G(\bar x)^\top \rho\in -\cN_{\cX}(\bar x),
\end{equation*}
and
\begin{equation*}
\langle \lambda,g(\bar x)\rangle
=
\langle A^\top \rho,g(\bar x)\rangle
=
\langle \rho,G(\bar x)\rangle=0.
\end{equation*}
Thus \((v,\lambda)\in \cM(\bar x)\). Conversely, if \((v,\lambda)\in \cM(\bar x)\), then \(\lambda\in\Kcone^*\), so \(\lambda=A^\top \rho\) for some \(\rho\ge0\). Since \(G(\bar x)=Ag(\bar x)\le0\), \(\rho\ge0\) and $0=\langle \lambda,g(\bar x)\rangle
=
\langle \rho,G(\bar x)\rangle$,
each component satisfies $\rho_iG_i(\bar x)=0$ for $i\in[\bar m]$; hence, \(\rho_i=0\) for all \(i\notin I_G(\bar x)\). Therefore \((v,\rho)\in \cM_A(\bar x)\). We can conclude that
\begin{align}
\label{eq:Tp-map}
    T'(\cM_A(\bar x))=\cM(\bar x).
\end{align}

\smallskip
\noindent
To invoke \cite[Corollary 3.2.5]{facchinei2003finite} on the lifted set $\cM_{A}^{\rm lift}(\bar x)$, we need a simple consequence of the polyhedrality of \(\cX\). Since \(\cX\) is polyhedral and \(x^k\to\bar x\), there exists $K\in\integers_+$ such that for all $k\geq K$, $I_{\cX}(x^k) \subset I_{\cX}(\bar x)$, where $I_{\cX}(x)$ denotes the index set of active constraints, defining the polyhedron $\cX$, at $x\in \cX$; hence, 
one can conclude that
\begin{equation}
\label{eq:normal-inclusion-before-corollary}
\cN_{\cX}(x^k)\subseteq \cN_{\cX}(\bar x)
\qquad k\geq K.
\end{equation}
Therefore, \(s^k\in \cN_{\cX}(\bar x)\) for all sufficiently large \(k\in\integers_+\), i.e., $k\geq K$.

\smallskip
\noindent
\textbf{(b.) The polyhedral perturbation estimate.}
After discarding finitely many terms, \eqref{eq:bounded-S}, \eqref{eq:normal-inclusion-before-corollary}, and \eqref{eq:tilderho-inactive-zero} show that the sequence \(\{(v^k,\tilde\rho^k,s^k)\}\) lies in a fixed bounded set $S$ and satisfies the following two conditions for all $k\in\integers_+$: \(s^k\in \cN_{\cX}(\bar x)\) and \(\tilde\rho_i^k=0\) for every inactive index \(i\notin I_G(\bar x)\). Therefore, we may apply \cite[Corollary 3.2.5(b)]{facchinei2003finite} to the lifted polyhedral set \(\cM_{A}^{\rm lift}(\bar x)\). Using \eqref{eq:corrected-stationarity}, there exists \(L_0>0\), independent of \(k\), such that
\begin{equation*}
\begin{aligned}
&\dist\bigl((v^k,\tilde\rho^k,s^k),\cM_{A}^{\rm lift}(\bar x)\bigr)
\\
&\quad\le
L_0\Big[
\|\nabla f(\bar x)-\nabla f(x^k)+\sigma^k\|
+
\sum_{j\in[p]}\|\nabla h_j(x^k)-\nabla h_j(\bar x)\|
+
\sum_{i\in I_G(\bar x)}\|\nabla G_i(x^k)-\nabla G_i(\bar x)\|
\Big].
\end{aligned}
\end{equation*}
Since \(\nabla f\), \(\nabla h\), and \(\nabla G\) are 
continuously differentiable near \(\bar x\), and since \(\sigma^k=o(t_k)\), the right-hand side 
above is $\cO(\|x^k-\bar x\|)+o(t_k)$. Thus, there exists \(C_1>0\) such that
\begin{equation}
\label{eq:lifted-distance-bound}
\dist\bigl((v^k,\tilde\rho^k,s^k),\cM_{A}^{\rm lift}(\bar x)\bigr)
\le
C_1\|x^k-\bar x\|+o(t_k).
\end{equation}

\smallskip
\noindent
\textbf{(c.) Project back to \(\cM_A(\bar x)\).}
Define the affine map
\begin{equation}
\label{eq:PsiA-def}
\Psi_A(v,\rho)\triangleq
-\nabla f(\bar x)-\nabla h(\bar x)^\top v-\nabla G(\bar x)^\top \rho.
\end{equation}
Then,
\begin{equation*}
(v,\rho)\in \cM_A(\bar x)
\quad\Longleftrightarrow\quad
(v,\rho,\Psi_A(v,\rho))\in \cM_{A}^{\rm lift}(\bar x).
\end{equation*}
That is, the lifted set is exactly the graph of \(\Psi_A\) over \(\cM_A(\bar x)\). Define
\begin{equation*}
T_A(v,\rho)\triangleq(v,\rho,\Psi_A(v,\rho)).
\end{equation*}
For every \((v,\rho)\),
\begin{equation}
\label{eq:dist-project-back}
\dist((v,\rho),\cM_A(\bar x))
\le
\dist(T_A(v,\rho),\cM_{A}^{\rm lift}(\bar x)),
\end{equation}
which is due to 
$\|T_A(v,\rho)-T_A(\bar v,\bar\rho)\|
\ge
\|(v,\rho)-(\bar v,\bar\rho)\|$ for any \((\bar v,\bar\rho)\in \cM_A(\bar x)\).

Next, we bound the distance between \(s^k\) and \(\Psi_A(v^k,\tilde\rho^k)\). Indeed, from the perturbed stationarity equation in \eqref{eq:corrected-stationarity}, we get
$s^k=-\nabla f(x^k)-\nabla h(x^k)^\top v^k-\nabla G(x^k)^\top \tilde\rho^k+\sigma^k$ for $k\in\integers_+$. Therefore,
\begin{equation*}
\begin{aligned}
s^k-\Psi_A(v^k,\tilde\rho^k)
&=\nabla f(\bar x)-\nabla f(x^k)
+\bigl(\nabla h(\bar x)-\nabla h(x^k)\bigr)^\top v^k 
+\bigl(\nabla G(\bar x)-\nabla G(x^k)\bigr)^\top \tilde\rho^k
+\sigma^k,
\end{aligned}
\end{equation*}
for $k\in\integers_+$. Since \(\{v^k\}\) and \(\{\tilde\rho^k\}\) are bounded, and \(\sigma^k=o(t_k)\), there exists \(C_2>0\) such that
\begin{equation}
\label{eq:s-Psi-bound}
\|s^k-\Psi_A(v^k,\tilde\rho^k)\|
\le
C_2\|x^k-\bar x\|+o(t_k).
\end{equation}
By the triangle inequality,
\begin{equation*}
\begin{aligned}
\dist(T_A(v^k,\tilde\rho^k),\cM_{A}^{\rm lift}(\bar x)) \le
\|\Psi_A(v^k,\tilde\rho^k)-s^k\|
+
\dist((v^k,\tilde\rho^k,s^k),\cM_{A}^{\rm lift}(\bar x)),\qquad\forall~k\in\integers_+.
\end{aligned}
\end{equation*}
Using \eqref{eq:lifted-distance-bound} and \eqref{eq:s-Psi-bound}, for \(C_3=C_1+C_2\),
\begin{equation*}
\dist(T_A(v^k,\tilde\rho^k),\cM_{A}^{\rm lift}(\bar x))
\le
C_3\|x^k-\bar x\|+o(t_k).
\end{equation*}
Finally, by \eqref{eq:dist-project-back},
\begin{equation}
\label{eq:MA-distance-bound}
\dist((v^k,\tilde\rho^k),\cM_A(\bar x))
\le
C_3\|x^k-\bar x\|+o(t_k).
\end{equation}

\medskip
\noindent
\textbf{(d.) Tighter control on $\dist((v^k,\tilde\rho^k),\cM_A(\bar x))$.}
The projection map $T'$, defined in \eqref{eq:projection-map}, satisfies \eqref{eq:Tp-map} and it is a linear map; therefore, we have
$\dist((v^k,A^\top \tilde\rho^k),\cM(\bar x))
\le
\|T'\|\dist((v^k,\tilde\rho^k),\cM_A(\bar x))$ for all $k\in\integers_+$.
Hence, by \eqref{eq:MA-distance-bound},
\begin{equation*}
\dist((v^k,A^\top \tilde\rho^k),\cM(\bar x))
\le
C_3 \|T'\|\|x^k-\bar x\|+o(t_k).
\end{equation*}
Moreover, since \(\lambda^k=A^\top \rho^k\), we have
$\|\lambda^k-A^\top \tilde\rho^k\|
=
\|A^\top (\rho^k-\tilde\rho^k)\|
\le
\|A^\top \|\,
\|\eta^k\|
=o(t_k)$.
Thus, for some constant $C>0$, we have
\begin{equation}
\label{eq:M-distance-bound-lambda}
\dist((v^k,\lambda^k),\cM(\bar x))
\le
C\|x^k-\bar x\|+o(t_k).
\end{equation}
Consequently, combining \eqref{eq:tk-def} and \eqref{eq:M-distance-bound-lambda}, we can conclude that
\begin{equation}
\label{eq:tk-order-ak}
t_k=\cO(\|x^k-\bar x\|),
\end{equation}
which is also shown for $\{t_k\}$ in the proof of \cite[Proposition 6.2.7]{facchinei2003finite}.
Therefore, from \eqref{eq:uhat-w-v-small} and \eqref{eq:sigma-small},
\begin{equation}
\label{eq:improved-residual-estimates}
\frac{u^k}{\|x^k-\bar x\|}\to0,
\qquad
\frac{w^k}{\|x^k-\bar x\|}\to0,
\qquad
\frac{\eta^k}{\|x^k-\bar x\|}\to0,
\qquad
\frac{\sigma^k}{\|x^k-\bar x\|}\to0.
\end{equation}
Finally, combining \eqref{eq:tk-order-ak} with \eqref{eq:MA-distance-bound}, we also have
\begin{equation}
\label{eq:MA-distance-O-ak}
\dist((v^k,\tilde\rho^k),\cM_A(\bar x))=\cO(\|x^k-\bar x\|).
\end{equation}

\medskip
\noindent
\textbf{Tangent direction construction.}
Let
\begin{equation*}
a_k\triangleq\|x^k-\bar x\|,
\qquad
 d^k\triangleq\frac{x^k-\bar x}{a_k},\qquad\forall~k\in\integers_+.
\end{equation*}
Passing to a subsequence, we can assume that there exists $d^\infty$ such that as $k\to\infty$,
\begin{equation}
\label{eq:dk-converges}
d^k\to d^\infty,
\qquad
\|d^\infty\|=1.
\end{equation}
Since \(\{x^k\}\subset \cX\), $x^k\to\bar x$ and $a_k\to 0$ as $k\to\infty$, we have
\begin{equation}
\label{eq:dinfty-tangent-X}
d^\infty\in \cT_{\cX}(\bar x).
\end{equation}
For every \(i\in I_G(\bar x)\), we have \(G_i(\bar x)=0\), and \eqref{eq:tilderho-complementarity} implies that \(G_i(x^k)+\eta_i^k\le0\) for $k\in\integers_+$; hence
\begin{equation}
\label{eq:G-linearization-ineq}
0\ge \eta_i^k+G_i(x^k)
=
 \eta_i^k+G_i(\bar x)+\nabla G_i(\bar x)^\top (x^k-\bar x)+o(a_k).
\end{equation}
Since \(G_i(\bar x)=0\), dividing \eqref{eq:G-linearization-ineq} by \(a_k\) and using \eqref{eq:improved-residual-estimates}, we obtain
\begin{equation}
\label{eq:linearized-G-feasible}
\nabla G_i(\bar x)^\top d^\infty\le0
\qquad
\forall i\in I_G(\bar x).
\end{equation}
Since \(G(\cdot)=Ag(\cdot)\), the condition in \eqref{eq:linearized-G-feasible} is equivalent to
\begin{equation}
\label{eq:linearized-conic-feasible}
\nabla g(\bar x)d^\infty\in \cT_{-\cK}(g(\bar x)).
\end{equation}
Moreover, since \(h(x^k)=w^k=o(a_k)\), we similarly get
\begin{equation}
\label{eq:linearized-h-feasible}
\nabla h(\bar x)d^\infty=0.
\end{equation}
Thus, \(d^\infty\in\cT_{\feas}(\bar x)\), i.e., it satisfies the linearized feasibility conditions in \eqref{eq:dinfty-tangent-X}, \eqref{eq:linearized-conic-feasible}, and \eqref{eq:linearized-h-feasible}.

\medskip
\noindent
\textbf{\(d^\infty\) is a critical direction.}
Since \(\{(v^k,\tilde\rho^k)\}\) is a bounded sequence, possibly after passing to a subsequence, we have
\begin{equation}
\label{eq:mu-rho-limit}
(v^k,\tilde\rho^k)\to(v^\infty,\rho^\infty)\qquad\mbox{as}\quad k\to\infty.
\end{equation}
By \eqref{eq:MA-distance-O-ak} and the closedness of \(\cM_A(\bar x)\), we have $(v^\infty,\rho^\infty)\in \cM_A(\bar x)$.
Let $\lambda^\infty\triangleq A^\top \rho^\infty$.
Then, $(v^\infty,\lambda^\infty)\in \cM(\bar x)$ and the stationarity inclusion at \(\bar x\) can be written as $\nabla_x\Phi(\bar x,v^\infty,\lambda^\infty)
\in -\cN_{\cX}(\bar x)$. Equivalently,
\begin{equation*}
s(\bar x,v^\infty,\rho^\infty)
\triangleq\nabla f(\bar x)+\nabla h(\bar x)^\top v^\infty+
\nabla G(\bar x)^\top \rho^\infty
\in -\cN_{\cX}(\bar x).
\end{equation*}
Define
\begin{equation*}
s^\infty\triangleq-s(\bar x,v^\infty,\rho^\infty)\in \cN_{\cX}(\bar x).
\end{equation*}
For $i\in[\bar m]$ such that \(\rho_i^\infty>0\), we have \(\tilde\rho_i^k>0\) for all sufficiently large \(k\), and the complementarity condition in \eqref{eq:tilderho-complementarity} implies that the following condition holds for all sufficiently large \(k\):
\begin{equation}
\label{eq:G-active-equality}
G_i(x^k)+\eta_i^k=0.
\end{equation}
Note that $\rho^\infty_i>0$ implies that $G_i(\bar x)=0$; hence, dividing \eqref{eq:G-active-equality} by \(a_k\) and using \eqref{eq:improved-residual-estimates} gives
\begin{equation}
\label{eq:positive-rho-zero-derivative}
\nabla G_i(\bar x)^\top d^\infty=0.
\end{equation}
Next, we show that
\begin{equation}
\label{eq:s-infty-orthogonality}
\langle s^\infty,d^\infty\rangle=0.
\end{equation}
Since \(s^k\in \cN_{\cX}(x^k)\) and \(\bar x\in \cX\), the normal-cone inequality gives $\langle s^k,\bar x-x^k\rangle\le0$,
or equivalently,
\begin{equation}
\label{eq:normal-ineq-sk-positive}
\left\langle s^k,~(x^k-\bar x)/a_k\right\rangle\ge0.
\end{equation}
In \eqref{eq:bounded-S}, we show that \(\{s^k\}\) is bounded; therefore, after passing to a subsequence, $s^k\to\bar s$ as $k\to\infty$ for some $\bar s$. Moreover, from \eqref{eq:s-Psi-bound} and \eqref{eq:tk-order-ak}, we get that as $k\to\infty$,
\begin{equation}
\label{eq:s-Psi-to-zero}
\|s^k-\Psi_A(v^k,\tilde\rho^k)\|\to0.
\end{equation}
Since \((v^k,\tilde\rho^k)\to(v^\infty,\rho^\infty)\) and $s^k\to\bar s$ hold, \eqref{eq:PsiA-def} and \eqref{eq:s-Psi-to-zero} imply that
\begin{equation*}
\bar s=\Psi_A(v^\infty,\rho^\infty)=s^\infty.
\end{equation*}
Thus, passing to the limit in \eqref{eq:normal-ineq-sk-positive} gives
$\langle s^\infty,d^\infty\rangle\ge0$. On the other hand, \(s^\infty\in \cN_{\cX}(\bar x)\) and \(d^\infty\in \cT_{\cX}(\bar x)\) also imply that
$\langle s^\infty,d^\infty\rangle\le0$. Therefore, \eqref{eq:s-infty-orthogonality} follows. By the definitions of $s(\bar x,v^\infty,\rho^\infty)$ and $s^\infty$, we have $s(\bar x,v^\infty,\rho^\infty)+s^\infty=0$; hence, taking the inner product of
this identity with \(d^\infty\) gives
\begin{equation*}
\langle \nabla f(\bar x),d^\infty\rangle=0,
\end{equation*}
where we used \eqref{eq:linearized-h-feasible}, \eqref{eq:positive-rho-zero-derivative}, and \eqref{eq:s-infty-orthogonality}. Together with \eqref{eq:linearized-G-feasible}, \eqref{eq:linearized-h-feasible}, and \eqref{eq:dinfty-tangent-X}, this yields
\begin{equation}
\label{eq:dinfty-critical}
d^\infty\in \cC(\bar x;\feas,\nabla f).
\end{equation}

\medskip
\noindent
\textbf{Sequence-level complementarity.}
For every \(i\in [\bar m]\) and for all sufficiently large \(k\), it holds that
\begin{equation}
\label{eq:rho-sequence-complementarity}
\tilde\rho_i^k\,\nabla G_i(\bar x)^\top d^\infty=0.
\end{equation}
Indeed, fix some arbitrary \(i\in [\bar m]\). If \(\tilde\rho_i^k>0\) for only finitely many \(k\), then eventually \(\tilde\rho_i^k=0\), and the conclusion is immediate. Suppose instead that \(\tilde\rho_i^k>0\) for infinitely many \(k\). Passing to that subsequence, still indexed by \(k\), we have
\begin{equation}
\label{eq:G-v-zero-subseq}
G_i(x^k)+\eta_i^k=0.
\end{equation}
Since \(\eta_i^k\to0\) and \(x^k\to\bar x\), it follows that \(G_i(\bar x)=0\). Dividing \eqref{eq:G-v-zero-subseq} by \(a_k\), and using \eqref{eq:improved-residual-estimates}, gives
$\nabla G_i(\bar x)^\top d^\infty=0$.
Thus, \eqref{eq:rho-sequence-complementarity} holds for this scenario as well.

We also need the corresponding identity for the normal-cone term:
\begin{equation}
\label{eq:s-sequence-complementarity}
\langle s^k,d^\infty\rangle=0
\end{equation}
for all sufficiently large \(k\). To see this, write the polyhedral set \(\cX\) as
\begin{equation*}
\cX=\{x:Fx\le \bar f,\ Ex=\bar e\}.
\end{equation*}
Then, every \(s^k\in \cN_{\cX}(x^k)\) admits the following representation:
\begin{equation*}
s^k=F^\top \alpha^k+E^\top \beta^k,
\qquad
\alpha^k\ge0,
\qquad
\alpha_j^k(f_j^\top x^k-\bar f_j)=0
\quad\forall j,
\end{equation*}
where $f_j$ denotes the $j$-th row of $F$. As \(d^\infty\in \cT_{\cX}(\bar x)\), we have
\begin{equation*}
Ed^\infty=0,
\qquad
f_j^\top d^\infty\le0
\quad\forall j\in I_{\cX}(\bar x),
\end{equation*}
where $I_{\cX}(\bar x)\triangleq\{j:f_j^\top \bar x=\bar f_j\}$.

Fix an arbitrary row index $j$. If \(j\notin I_{\cX}(\bar x)\), then \(f_j^\top \bar x<\bar f_j\), which implies that \(f_j^\top x^k<\bar f_j\) for all large \(k\); therefore \(\alpha_j^k=0\) eventually. On the other hand, if \(\alpha_j^k>0\) infinitely often, then for those \(k\), we have $f_j^\top x^k-\bar f_j=0$. Since \(\bar x\in \cX\), this implies that \(j\in I_{\cX}(\bar x)\); therefore, $f_j^\top d^\infty=\lim_{k\to\infty} f_j^\top x^k-f_j^\top \bar x=0$. Thus, for each fixed inequality index \(j\), \(\alpha_j^k f_j^\top d^\infty=0\) eventually. Since there are only finitely many inequality indices, there exists $K'\in\integers_+$ such that for $k\geq K'$, this conclusion holds simultaneously for all \(j\), i.e., $\alpha_j^k f_j^\top d^\infty=0$ holds for all $j$ eventually. Since \(Ed^\infty=0\), we obtain
\begin{equation*}
\langle s^k,d^\infty\rangle
=
\sum_j\alpha_j^k f_j^\top d^\infty
+
\langle \beta^k,Ed^\infty\rangle
=0,\qquad\forall~k\geq K'.
\end{equation*}

\medskip
\noindent
\textbf{Contradiction.} We first verify that $d^\infty$ solves the second-order homogeneous system. Define
\begin{equation*}
\Phi_A(x,v,\rho)\triangleq f(x)+\langle v, h(x)\rangle+\langle\rho, G(x)\rangle.
\end{equation*}
Then the perturbed stationarity equation in \eqref{eq:corrected-stationarity} can be written as
\begin{equation}
\label{eq:LA-perturbed-stationarity}
\nabla_{x}\Phi_A(x^k,v^k,\tilde\rho^k)+s^k-\sigma^k=0,\qquad\forall~k\in\integers_+.
\end{equation}
Since \(\{(v^k,\tilde\rho^k)\}\) is bounded, \(f,h\) and \(G\) are \(C^2\), the Taylor remainder in the expansion of \(\nabla_{x}\Phi_A(\cdot,v^k,\tilde\rho^k)\) at \(\bar x\) is \(o(a_k)\) uniformly along the sequence. Hence, for all $k\in\integers_+$,
\begin{equation}
\label{eq:LA-taylor}
\nabla_{x}\Phi_A(x^k,v^k,\tilde\rho^k)
=
\nabla_{x}\Phi_A(\bar x,v^k,\tilde\rho^k)
+
\nabla_{xx}^2\Phi_A(\bar x,v^k,\tilde\rho^k)(x^k-\bar x)
+
e^k,
\end{equation}
where
\begin{equation}
\label{eq:taylor-remainder-small}
\frac{\|e^k\|}{a_k}\to0.
\end{equation}
Combining \eqref{eq:LA-perturbed-stationarity} and \eqref{eq:LA-taylor}, we obtain
\begin{equation}
\label{eq:stationarity-dualcone-rhs}
\nabla_{xx}^2\Phi_A(\bar x,v^k,\tilde\rho^k)(x^k-\bar x)
+
e^k-
\sigma^k
=
-\bigl(\nabla_{x}\Phi_A(\bar x,v^k,\tilde\rho^k)+s^k\bigr),\qquad\forall~k\in\integers_+.
\end{equation}
Let $\mathcal{C}\triangleq\cC(\bar x;\feas,\nabla f)$. We claim that, for all sufficiently large \(k\),
\begin{equation}
\label{eq:minus-LA-plus-s-in-dual}
-\bigl(\nabla_{x}\Phi_A(\bar x,v^k,\tilde\rho^k)+s^k\bigr)
\in \mathcal{C}^*.
\end{equation}
Indeed, fix an arbitrary \(d\in \mathcal{C}\). Then, the condition in \eqref{eq:critical-cone-def} implies that
\begin{equation*}
\langle \nabla f(\bar x),d\rangle=0,
\qquad
\nabla h(\bar x)d=0,
\qquad
d\in \cT_{\cX}(\bar x),
\qquad
\nabla G_i(\bar x)^\top d\le0
\qquad
\forall i\in I_G(\bar x).
\end{equation*}
By \eqref{eq:tilderho-inactive-zero}, \(\tilde\rho_i^k=0\) for \(i\notin I_G(\bar x)\), and \(\tilde\rho_i^k\ge0\) for all \(i\in [\bar m]\). Moreover, by \eqref{eq:normal-inclusion-before-corollary}, \(s^k\in \cN_{\cX}(\bar x)\) for all sufficiently large \(k\). Finally, since \(d\in \cT_{\cX}(\bar x)\), we also have $\langle s^k,d\rangle\le0$ eventually. Therefore,
\begin{equation*}
\begin{aligned}
\langle \nabla_{x}\Phi_A(\bar x,v^k,\tilde\rho^k)+s^k,d\rangle
&=
\langle \nabla f(\bar x),d\rangle
+
\langle v^k,\nabla h(\bar x)d\rangle
+
\sum_{i=1}^{\bar m}\tilde\rho_i^k\nabla G_i(\bar x)^\top d
+
\langle s^k,d\rangle
\\
&=
\sum_{i\in I_G(\bar x)}\tilde\rho_i^k\nabla G_i(\bar x)^\top d
+
\langle s^k,d\rangle
\le0.
\end{aligned}
\end{equation*}
Thus, $\left\langle -\bigl(\nabla_{x}\Phi_A(\bar x,v^k,\tilde\rho^k)+s^k\bigr),d\right\rangle
\ge0$ holds for all $d\in \mathcal{C}$, which proves \eqref{eq:minus-LA-plus-s-in-dual}. Furthermore, from \eqref{eq:stationarity-dualcone-rhs} and \eqref{eq:minus-LA-plus-s-in-dual}, we get
\begin{equation}
\label{eq:prelimit-dual-inclusion}
\nabla_{xx}^2\Phi_A(\bar x,v^k,\tilde\rho^k)(x^k-\bar x)
+
e^k-
\sigma^k
\in \mathcal{C}^*.
\end{equation}
Dividing \eqref{eq:prelimit-dual-inclusion} by \(a_k\), using \eqref{eq:improved-residual-estimates}, \eqref{eq:taylor-remainder-small}, \eqref{eq:dk-converges}, and \eqref{eq:mu-rho-limit}, we obtain
\begin{equation}
\label{eq:limit-dual-inclusion}
\nabla_{xx}^2\Phi_A(\bar x,v^\infty,\rho^\infty)d^\infty
\in \mathcal{C}^*.
\end{equation}
Finally, it remains to prove orthogonality. Recall that in \eqref{eq:rho-sequence-complementarity} and \eqref{eq:s-sequence-complementarity}, we established that for every \(i\in\{1,\ldots,\bar m\}\) and all sufficiently large \(k\), we have $\tilde\rho_i^k\nabla G_i(\bar x)^\top d^\infty=0$ and $\langle s^k,d^\infty\rangle=0$. Since \(d^\infty\in \mathcal{C}\), we also have $\langle \nabla f(\bar x),d^\infty\rangle=0$ and $\nabla h(\bar x)d^\infty=0$. Therefore,
\begin{equation}
\label{eq:zero-pairing-LA-s}
\langle d^\infty,\nabla_{x}\Phi_A(\bar x,v^k,\tilde\rho^k)+s^k\rangle=0.
\end{equation}
Taking the inner product of \eqref{eq:stationarity-dualcone-rhs} with \(d^\infty\), and using \eqref{eq:zero-pairing-LA-s}, gives
\begin{equation}
\label{eq:orthogonality-prelimit}
\left\langle d^\infty,
\nabla_{xx}^2\Phi_A(\bar x,v^k,\tilde\rho^k)(x^k-\bar x)
+
e^k-
\sigma^k
\right\rangle=0.
\end{equation}
Dividing \eqref{eq:orthogonality-prelimit} by \(a_k\), and then passing to the limit using \eqref{eq:improved-residual-estimates}, \eqref{eq:taylor-remainder-small}, \eqref{eq:dk-converges}, and \eqref{eq:mu-rho-limit}, we obtain
\begin{equation}
\label{eq:limit-orthogonality}
\langle d^\infty,\nabla_{xx}^2\Phi_A(\bar x,v^\infty,\rho^\infty)d^\infty\rangle=0.
\end{equation}
Combining \eqref{eq:limit-dual-inclusion} and \eqref{eq:limit-orthogonality}, we conclude that $d^\infty
\perp
\nabla_{xx}^2\Phi_A(\bar x,v^\infty,\rho^\infty)d^\infty
\in \mathcal{C}^*$. Finally, since \(\lambda^\infty=A^\top \rho^\infty\), we get $\nabla_{xx}^2\Phi_A(\bar x,v^\infty,\rho^\infty)d^\infty
=
\nabla_{xx}^2\Phi(\bar x,v^\infty,\lambda^\infty)d^\infty$. Thus, we can conclude that
\begin{equation}
\label{eq:homogeneous-system-obtained}
d^\infty
\perp
\nabla_{xx}^2\Phi(\bar x,v^\infty,\lambda^\infty)d^\infty
\in
\cC(\bar x;\feas,\nabla f)^*.
\end{equation}
To conclude the proof, note that by \eqref{eq:dinfty-critical}, \eqref{eq:dk-converges}, and \eqref{eq:homogeneous-system-obtained}, we have constructed $d^\infty$ such that
\begin{equation*}
d^\infty\in \cC(\bar x;\feas,\nabla f),
\qquad
\|d^\infty\|=1,
\qquad
d^\infty
\perp
\nabla_{xx}^2\Phi(\bar x,v^\infty,\lambda^\infty)d^\infty
\in
\cC(\bar x;\feas,\nabla f)^*.
\end{equation*}
This contradicts \eqref{eq:homogeneous-system-no-q}; therefore, the desired local error bound in~\eqref{eq:error-bound-conclusion} holds.

\end{document}